\documentclass[11pt,reqno]{amsart}

\usepackage[T1]{fontenc}
\usepackage[utf8]{inputenc}
\usepackage[a4paper,margin=29mm]{geometry}
\usepackage{lmodern}
\usepackage{microtype}
\microtypesetup{expansion=false}
\usepackage{amsmath,amssymb,amsthm}
\usepackage{mathtools}
\usepackage{mathrsfs}
\usepackage{enumitem}
\usepackage{tikz-cd}
\usepackage{quiver}
\usepackage{xcolor}
\usepackage{aliascnt}
\usepackage{hyperref}
\usepackage{orcidlink}
\usepackage[nameinlink,capitalize,noabbrev]{cleveref}

\hypersetup{
  colorlinks=true,
  linkcolor=blue!55!black,
  citecolor=green!40!black,
  urlcolor=blue!65!black
}

\numberwithin{equation}{section}

\newtheorem{theorem}{Theorem}[section]

\newaliascnt{proposition}{theorem}
\newtheorem{proposition}[proposition]{Proposition}
\aliascntresetthe{proposition}

\newaliascnt{lemma}{theorem}
\newtheorem{lemma}[lemma]{Lemma}
\aliascntresetthe{lemma}

\newaliascnt{corollary}{theorem}
\newtheorem{corollary}[corollary]{Corollary}
\aliascntresetthe{corollary}

\newaliascnt{assumption}{theorem}

\aliascntresetthe{assumption}

\theoremstyle{definition}

\newaliascnt{definition}{theorem}
\newtheorem{definition}[definition]{Definition}
\aliascntresetthe{definition}

\newaliascnt{example}{theorem}

\aliascntresetthe{example}

\theoremstyle{remark}

\newaliascnt{remark}{theorem}
\newtheorem{remark}[remark]{Remark}
\aliascntresetthe{remark}

\newaliascnt{warning}{theorem}

\aliascntresetthe{warning}

\crefname{proposition}{Proposition}{Propositions}
\Crefname{proposition}{Proposition}{Propositions}
\crefname{lemma}{Lemma}{Lemmas}
\Crefname{lemma}{Lemma}{Lemmas}
\crefname{corollary}{Corollary}{Corollaries}
\Crefname{corollary}{Corollary}{Corollaries}
\crefname{assumption}{Assumption}{Assumptions}
\Crefname{assumption}{Assumption}{Assumptions}
\crefname{definition}{Definition}{Definitions}
\Crefname{definition}{Definition}{Definitions}
\crefname{example}{Example}{Examples}
\Crefname{example}{Example}{Examples}
\crefname{remark}{Remark}{Remarks}
\Crefname{remark}{Remark}{Remarks}
\crefname{warning}{Warning}{Warnings}
\Crefname{warning}{Warning}{Warnings}

\DeclareMathOperator{\age}{age}

\DeclareMathOperator{\Bl}{Bl}

\DeclareMathOperator{\divisor}{div}

\DeclareMathOperator{\Gr}{Gr}

\DeclareMathOperator{\Spec}{Spec}

\newcommand{\C}{\mathbb C}

\newcommand{\Q}{\mathbb Q}

\newcommand{\Z}{\mathbb Z}

\newcommand{\cO}{\mathcal O}

\newcommand{\Firr}{F_{\mathrm{irr}}}

\newcommand{\id}{\mathrm{id}}

 \title[Compactification Independence on Deligne--Mumford Stacks]
{Compactification Independence of the Irregular Hodge Filtration
on Deligne--Mumford Stacks}
\author{Haoxu Wang\,\orcidlink{0009-0004-5598-063X}}
\address{Morningside Center of Mathematics, Academy of Mathematics and
Systems Science, Chinese Academy of Sciences, Beijing 100190, China}
\email{krassotkinkolya@gmail.com}
\subjclass[2020]{Primary 14F40; Secondary 14A20, 14E05, 14J33}
\keywords{irregular Hodge filtration, Deligne--Mumford stacks,
Landau--Ginzburg models, compactification independence, weak factorization}
\hypersetup{
  pdftitle={Compactification Independence of the Irregular Hodge Filtration on Deligne-Mumford Stacks},
  pdfauthor={Haoxu Wang},
  pdfsubject={Irregular Hodge filtration on Deligne-Mumford stack Landau-Ginzburg models},
  pdfkeywords={irregular Hodge filtration, Deligne-Mumford stacks, Landau-Ginzburg models, compactification independence, weak factorization}
}
\date{}
 
\begin{document}

\begin{abstract}
Let \((\mathscr U,w)\) be a smooth separated Deligne--Mumford stack
of finite type over \(\mathbb C\), with a regular function \(w\).
Following Yu, one can use a compactification to define a filtration
on its twisted de Rham cohomology.  The same compactification gives
Kontsevich lattices that compute this filtration.  The extension of
\(w\) on the compactification may be only rational.  Only a local
nondegeneracy condition near the polar divisor is imposed.

We prove that the resulting filtration does not depend on the
compactification.  The proof uses good resolutions and stacky weak
factorization.  This reduces the comparison to blowups and roots
along boundary divisors.  Filtered comparison theorems for the Yu and
Kontsevich complexes are established for both operations.

Harder and Lee use orbifold irregular Hodge numbers in their study of
stacky Clarke mirror pairs.  Compactification independence applies
sector by sector to their setting.  Thus the resulting orbifold
filtration and irregular Hodge numbers depend only on the stack
Landau--Ginzburg model.
\end{abstract}

\maketitle
\tableofcontents

\section{Introduction}
\label{sec:introduction}

\subsection{The compactification problem and mirror symmetry}
\label{subsec:introduction-compactification-problem}

In mirror symmetry, a pair \((Y,v)\), where \(Y\) is a smooth variety
and \(v\) is a regular function on \(Y\), is called a
Landau--Ginzburg model.  Harder--Lee extend this language to smooth
Deligne--Mumford stacks.  Their mirror formula for a
\(d\)-dimensional stacky Clarke mirror pair is
\[
 f_{\mathrm{orb}}^{\lambda,\mu}
 \bigl(\mathcal T(\Sigma),w(\check\Sigma)\bigr)
 =
 f_{\mathrm{orb}}^{d-\lambda,\mu}
 \bigl(\mathcal T(\check\Sigma),w(\Sigma)\bigr)
\]
\cite[Theorem~1.1]{HarderLee}.  The terms
\(f_{\mathrm{orb}}^{\lambda,\mu}\) are orbifold irregular Hodge
numbers.  Their definition starts with twisted de Rham cohomology and
the irregular Hodge filtration.

Let \(\mathscr U\) be a smooth separated
Deligne--Mumford stack of finite type over \(\mathbb C\), and let
\[
 w\colon\mathscr U\longrightarrow\mathbb A^1
\]
be a regular function.  Its twisted de Rham cohomology is
\[
 H^k(\mathscr U,w)
 :=
 H_{\mathrm{dR}}^k(\mathscr U,w)
 :=
 \mathbb H^k\!\left(
  \mathscr U,
  (\Omega_{\mathscr U}^\bullet,d+dw\wedge)
 \right).
\]

Suppose first that \(\mathscr U=U\) is a smooth quasi-projective
variety.  Yu introduced the irregular Hodge filtration on this
cohomology \cite{Yu}.  He chooses a good compactification
\[
 U\hookrightarrow X,
 \qquad
 w\colon X\longrightarrow\mathbb P^1.
\]
Here \(X\) is smooth and proper.  The divisor \(D=X\setminus U\) has
normal crossings, and the extension of \(w\) is a morphism
\cite[\S1(a)]{Yu}.  If \(P=w^*(\infty)\), then, for
\(\lambda\in\mathbb Q\), the level-\(\lambda\) Yu complex has
degree-\(a\) term
\begin{equation*}
 F_{\mathrm{irr}}^\lambda K_{X,w}^a
 =
 \begin{cases}
  0,
  &a<\lceil\lambda\rceil,\\[3pt]
  \Omega_X^a(\log D)
  \bigl(\lfloor(a-\lambda)P\rfloor\bigr),
  &a\geq\lceil\lambda\rceil.
 \end{cases}
\end{equation*}
The induced filtration on cohomology is
\[
 F_{\mathrm{irr},X}^\lambda H^k(U,w)
 :=
 \operatorname{Im}\!\left[
  \mathbb H^k\!\left(
   X,F_{\mathrm{irr}}^\lambda K_{X,w}^\bullet
  \right)
  \longrightarrow H^k(U,w)
 \right].
\]
Yu proves that this subspace does not depend on the good
compactification \(X\).  His proof uses blowups and weak factorization
\cite[Lemma~1.6 and Theorem~1.7]{Yu}.

For a Laurent polynomial \(w\) on
\(T^n=(\mathbb C^*)^n\), Yu also considers smooth toric
compactifications \(T^n\subset X\).  In general, \(w\) extends to
\(X\) only as a rational map to \(\mathbb P^1\).  He proves that the
Newton filtration still computes the irregular Hodge filtration
\cite[\S4]{Yu}.  Related meromorphic constructions are studied in
\cite[\S9]{SabbahYu}.  Chen--Yu later define nondegenerate rational
compactifications and prove compactification independence for smooth
varieties \cite[\S\S2.1--2.3]{ChenYu}.

For Deligne--Mumford stacks, one must also include the twisted
sectors.  Orbifold cohomology is built from the inertia stack.  The contribution
of each sector is shifted by its age \cite{ChenRuan,AGV}.  Let
\[
 e\colon I\mathscr U\longrightarrow\mathscr U,
 \qquad
 I\mathscr U=\coprod_{\nu\in\Lambda}\mathscr U_\nu .
\]
Put \(w_\nu=e^*w|_{\mathscr U_\nu}\), and let
\(a_\nu\in\mathbb Q\) be the age of \(\mathscr U_\nu\).

Harder--Lee use this construction for twisted de Rham cohomology.  On
a fixed nondegenerate projective toroidal orbifold compactification,
they use Yu's filtered de Rham complexes on the twisted sectors.  They
explain the extension from smooth varieties through local quotient
charts \cite[\S3.1 and Remark~2.8]{HarderLee}.  Following
Harder--Lee, we suppress this compactification from the notation.  For
\(q\in\mathbb Q\), they define
\[
 H_{\mathrm{dR,orb}}^q(\mathscr U,w)
 :=
 \bigoplus_{\substack{\nu\in\Lambda\\q-2a_\nu\in\mathbb Z}}
 H_{\mathrm{dR}}^{q-2a_\nu}(\mathscr U_\nu,w_\nu).
\]
Its orbifold filtration is
\[
 F_{\mathrm{orb}}^\lambda
 H_{\mathrm{dR,orb}}^q(\mathscr U,w)
 =
 \bigoplus_{\substack{\nu\in\Lambda\\q-2a_\nu\in\mathbb Z}}
 F_{\mathrm{irr}}^{\lambda-a_\nu}
 H_{\mathrm{dR}}^{q-2a_\nu}(\mathscr U_\nu,w_\nu).
\]
This is the age-shifted orbifold irregular Hodge filtration of
\cite[Definition~3.9]{HarderLee}.
The orbifold irregular Hodge numbers are
\[
 f_{\mathrm{orb}}^{\lambda,\mu}(\mathscr U,w)
 :=
 \dim
 \operatorname{Gr}_{F_{\mathrm{orb}}}^\lambda
 H_{\mathrm{dR,orb}}^{\lambda+\mu}(\mathscr U,w),
 \qquad \lambda,\mu\in\mathbb Q.
\]
Their construction itself does not compare the filtrations obtained
from different compactifications; the independence theorem below shows
a posteriori that the resulting filtration is intrinsic.

\subsection{Main results}
\label{subsec:introduction-main-results}

Our work supplies this global comparison.  We extend
compactification independence from smooth varieties to smooth
Deligne--Mumford stacks.  
Applied to each inertia sector, our
result gives the compactification independence needed for the
Harder--Lee numbers.

We first describe the compactifications used in the main theorem.
Let \(\mathscr X\) be a smooth proper Deligne--Mumford stack that
contains \(\mathscr U\) as an open substack, and let
\[
 \mathscr D=(\mathscr X\setminus\mathscr U)_{\mathrm{red}}.
\]
We assume that \(\mathscr D\) has normal crossings.  We also allow
\(w\) to extend only as a rational function on \(\mathscr X\).  Write
\(\divisor(w)=\mathscr Z-\mathscr P\).  The pole divisor is supported
on \(\mathscr D\).  At each point of \(|\mathscr P|\), we require
\(\mathscr Z\) to be empty or smooth, and
\(\mathscr Z+\mathscr D\) to have normal crossings. We call this an \emph{NC rational stack compactification}.  A
\emph{good stack compactification} is one for which the boundary has
a labeled SNC presentation and \(w\) extends to a morphism
\(\mathscr X\to\mathbb P^1\).  The precise definitions are given in
\cref{def:nc-rational-stack-pair}.

For \(\lambda\in\mathbb Q\), Yu's formula defines a complex
\(F_{\mathrm{irr}}^\lambda
\mathcal K_{\mathscr X,w}^\bullet\) on every compactification of this
kind.  The image of its hypercohomology in \(H^k(\mathscr U,w)\) is
\(F_{\mathrm{irr},\mathscr X}^\lambda H^k(\mathscr U,w)\). See \cref{subsec:yu-filtered-complex}.
The main result is that this subspace does not depend on \(\mathscr X\).

\begin{theorem}\label{thm:introduction-nc-rational-compactification-independence}
Let \((\mathscr U,w)\) be a smooth separated Deligne--Mumford
Landau--Ginzburg model of finite type over \(\mathbb C\).  Let
\(\mathscr X_1\) and \(\mathscr X_2\) be two NC rational stack
compactifications.  Then, for every \(k\in\mathbb Z\) and
\(\lambda\in\mathbb Q\),
\begin{equation*}
 F_{\mathrm{irr},\mathscr X_1}^{\lambda}H^k(\mathscr U,w)
 =
 F_{\mathrm{irr},\mathscr X_2}^{\lambda}H^k(\mathscr U,w)
\end{equation*}
as subspaces of \(H^k(\mathscr U,w)\).
We denote the common subspace by
\[
 F_{\mathrm{irr}}^\lambda H^k(\mathscr U,w).
\]
\end{theorem}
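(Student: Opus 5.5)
The argument has two stages.  First we reduce an arbitrary NC rational stack compactification to a good one.  Then we compare any two good compactifications by stacky weak factorization.  In both stages the goal is the same: for a proper birational morphism \(\pi\colon\mathscr X'\to\mathscr X\) of compactifications that is the identity on \(\mathscr U\), the natural map
\[
F_{\mathrm{irr}}^\lambda\mathcal K_{\mathscr X,w}^\bullet\longrightarrow R\pi_*F_{\mathrm{irr}}^\lambda\mathcal K_{\mathscr X',w}^\bullet
\]
should be a filtered quasi-isomorphism.  Since both complexes map compatibly to \(Rj_*(\Omega^\bullet_{\mathscr U},d+dw)\), such a quasi-isomorphism gives equality of the images in \(H^k(\mathscr U,w)\).

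\textbf{Step 1: reduction to good compactifications.}  Given an NC rational stack compactification \(\mathscr X\), we resolve the indeterminacy of \(w\).  The indeterminacy locus lies in \(|\mathscr Z|\cap|\mathscr P|\), and near it \(\mathscr Z\) is smooth and \(\mathscr Z+\mathscr D\) is NC.  So we can eliminate it by blowing up the smooth centres \(\mathscr Z\cap\mathscr P_i\) repeatedly.  These are strata of an NC configuration, and we use a canonical (functorial) procedure so that the construction descends along étale charts to the stack.  If needed, we add root and blowup steps (e.g.\ Bergh's destackification) to get a labeled SNC boundary.

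Each such blowup or root is centred on a stratum of \(\mathscr Z+\mathscr D\) that lies inside \(|\mathscr P|\) or inside \(\mathscr D\).  The required filtered comparison is étale-local on \(\mathscr X\), so by a quotient presentation \([V/G]\) it reduces to a \(G\)-equivariant local computation on smooth \(V\).  There it becomes a Koszul/toric calculation in coordinates adapted to the NC divisor.  This is the Chen--Yu computation for nondegenerate rational compactifications, run equivariantly.  Taking \(G\)-invariants is exact in characteristic zero, so the comparison descends.

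\textbf{Step 2: good versus good.}  Let \(\mathscr X_1,\mathscr X_2\) be good compactifications.  We take a common good compactification dominating both: the closure of the diagonal image of \(\mathscr U\), then a stacky resolution and a resolution of the indeterminacy.  Then we apply stacky weak factorization relative to \(\mathscr U\) (Abramovich--Temkin, Harper).  This connects \(\mathscr X_1\) and \(\mathscr X_2\) by a zigzag of blowups along smooth centres having normal crossings with the boundary, and of root stacks along boundary divisors.  Every intermediate stack is again a good stack compactification.

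It then suffices to prove two filtered comparison theorems, one for the Yu complexes and one for the Kontsevich lattices, for the following two moves.
\begin{enumerate}
\item[(a)] Blowup \(\pi\) along a centre \(C\) with NC to \(\mathscr D\).  Here \(\pi^*P\) has multiplicities \(\sum_{i}m_i\) along the exceptional divisor \(E\), where the sum is over the \(P_i\) containing \(C\).  We check \(R\pi_*\Omega^a_{\mathscr X'}(\log D')(\lfloor(a-\lambda)\pi^*P\rfloor)=\Omega^a_{\mathscr X}(\log D)(\lfloor(a-\lambda)P\rfloor)\) term by term.  This is Yu's Lemma~1.6 adapted to rational coefficients, done étale locally and equivariantly.
\item[(b)] Root stack \(\rho\colon\sqrt[r]{(\mathscr X,D_i)}\to\mathscr X\).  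We have \(\rho^*D_i=rD_i'\), log forms pull back isomorphically, and \(\rho^*P\) has multiplicity \(rm_i\) along \(D_i'\).  Locally \(\rho\) is \([\mathbb A^1/\mu_r]\) with \(t=s^r\).  Taking \(\mu_r\)-invariants of \(s^{-\lfloor(a-\lambda)rm_i\rfloor}\) times log forms recovers \(t^{-\lfloor(a-\lambda)m_i\rfloor}\), since the invariant exponents are multiples of \(r\).  Also \(\rho_*\) is exact.
\end{enumerate}

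The main obstacle is Step 1 and move (a) in the rational setting.  Near \(\mathscr Z\cap\mathscr P\) the differential \(dw\wedge\) is not homogeneous in the pole order, and the term-by-term vanishing of higher direct images fails.  One needs a genuine filtered quasi-isomorphism, built from a local Koszul filtration together with the nondegeneracy of \(\mathscr Z\).  I would first prove this in local coordinates \(w=z/\prod x_i^{m_i}\) (or \(w=1/\prod x_i^{m_i}\)).  I would use the Kontsevich complex \(\Omega^\bullet_f\) (forms \(\eta\) with \(dw\wedge\eta\) log) as an intermediary, since it is insensitive to rounding.  The equivariance and descent from \(V\) to \([V/G]\) are then formal.
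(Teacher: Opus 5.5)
Your overall architecture matches the paper's. You resolve each NC rational model to a good one, join good models by relative stacky weak factorization, and prove filtered comparisons for boundary blowups and roots, with Kontsevich lattices absorbing zero--pole cancellation. Step~2 is fine; the common dominating model is not needed, since Bergh--Rydh factor the birational map directly.

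For noncancelling moves (\(\mathscr P'=\pi^*\mathscr P\)), your direct Yu-lattice comparison is a valid and somewhat more elementary alternative to the paper's uniform Kontsevich roof. With \(\gamma=a-\lambda\geq0\), the excess of \(\lfloor\gamma\pi^*\mathscr P\rfloor\) over \(\pi^*\lfloor\gamma\mathscr P\rfloor\) along \(E\) is \(\lfloor\sum_i\{\gamma m_i\}\rfloor\le\max\{k-1,0\}\). Together with the logarithmic defect, every factor of the cokernel then has the form \(\mathcal O_{E/C}(-\mu)\otimes p^*\mathcal V\) with \(1\le\mu\le\max\{k-1,0\}+s\le\operatorname{codim}C-1\). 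Here \(k\) counts the polar branches through \(C\) and \(s\) its non-boundary equations. Your root computation is the paper's nontrivial-character argument.

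One correction: the Kontsevich complex is not ``insensitive to rounding'' at the level you need. Its \(\alpha=0\) version reaches only integral \(\lambda\). Fractional levels require \(\Omega^\bullet_{\mathscr X,w}(\alpha)\). Its pullback under a cancelling blowup exists because of the reversed inequality \(\pi^*\lceil(1-\alpha)\mathscr P\rceil\ge\lceil(1-\alpha)\mathscr P'\rceil\). The comparison is then a roof for each fixed \(\alpha\), not one filtered quasi-isomorphism, which still suffices for equality of images.

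The genuine gap is in Step~1. An NC rational compactification has only an NC boundary, so along \(\mathscr Z\) a polar component may cross itself or have its local branches permuted by stabilizers. There \(\mathscr Z\cap\mathscr P_i\) is \'etale-locally \((z,x_1\cdots x_\ell)\). This is a singular codimension-two complete intersection, and its branch loci \((z,x_i)\) are not stable under the \'etale relation, so they do not descend. Hence the centres you propose are neither smooth nor definable branch by branch on the stack. The Chen--Yu algorithm, which selects one pair \((H,C)\) at a time, has no canonical version here. Your ``canonical procedure'' therefore has to be supplied, and it is exactly the stack-specific content of this step.

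The paper fixes this by reversing your order. It first strictifies the boundary by blowing up boundary strata in decreasing codimension, i.e.\ the barycentric subdivision of the cone complex. Near \(|\mathscr P|\) these strata are transverse to \(\mathscr Z\), so no cancellation occurs and your noncancelling comparison applies even though \(w\) is still only rational. On the strict model, each connected component of \(\mathscr Z\times_{\mathscr X}H\), for a labeled polar component \(H\), is a smooth global centre. Each such blowup lowers the weighted incidence count \(\sum_{(H,C)}e(H)\) by exactly one, so the process terminates.

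Destackification cannot replace strictification. Its centres are not confined to the boundary, so it would alter \(\mathscr U\) and its inertia. No roots are needed for this step.
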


The proof has three ingredients.

\begin{enumerate}[label=\textup{(\arabic*)},leftmargin=*]
\item \emph{Filtered comparison for elementary modifications.}
  Let \(\pi\colon\mathscr Y\to\mathscr X\) be either a root along a
  labeled boundary divisor or the blowup of a smooth
  boundary-admissible center.  Then pullback induces, for every
  \(\lambda\in\mathbb Q\), a quasi-isomorphism
  \[
   F_{\mathrm{irr}}^\lambda\mathcal K_{\mathscr X,w}^\bullet
   \xrightarrow{\ \sim\ }
   R\pi_*F_{\mathrm{irr}}^\lambda
   \mathcal K_{\mathscr Y,w_{\mathscr Y}}^\bullet.
  \]
  See \cref{thm:boundary-root-filtered-invariance}
  and \cref{thm:adapted-boundary-blowup-levelwise-invariance}.  Roots are the
  additional operations required by stacky weak factorization.  The
  blowup theorem simultaneously covers the noncancelling centers of
  Yu \cite[Lemma~1.6]{Yu}, the zero--pole cancellation centers of
  Chen--Yu \cite[\S2.3]{ChenYu}, and the blowups used for boundary
  strictification.

\item \emph{Resolution of NC rational compactifications.}
  Every NC rational stack compactification is dominated by a good
  stack compactification through a representable projective
  birational morphism
  \(\rho\colon\mathscr Y\to\mathscr X\) that is the identity over
  \(\mathscr U\) and is a composite of boundary-admissible blowups;
  see \cref{thm:filtered-good-resolution}.  The construction first
  realizes the barycentric subdivision of the generalized cone
  complex of the boundary by ordinary blowups along smooth boundary
  strata
  \cite[Sections~4.3 and~5.3]{MPS},
  \cite[Sections~3.2.1--3.2.3]{HolmesSchwarz};
  compare \cite[Lemmas~A.2.4 and~A.2.6]{Harper}.
  It then applies the Chen--Yu zero--pole resolution algorithm
  \cite[\S2.3]{ChenYu}.  The comparisons in \textup{(1)} apply at
  every step.

\item \emph{Relative stacky weak factorization.}
  Any two good stack compactifications are joined, relative to
  \(\mathscr U\), by a finite zigzag whose edges, in one orientation,
  are boundary-admissible blowups or roots along labeled boundary
  divisors.  The extensions of \(w\) are compatible along the
  zigzag; see
  \cref{thm:relative-stacky-weak-factorization}.  We use the general
  stacky weak factorization theorem of Bergh--Rydh
  \cite[Theorem~D]{BerghRydh}; compare
  \cite[Theorem~1.1]{Harper} and Bergh's published
  global-quotient case \cite{BerghWF}.  Applying \textup{(1)} after
  the resolutions in \textup{(2)} proves
  \cref{thm:introduction-nc-rational-compactification-independence}.

\end{enumerate}

For completeness, we also prove that good stack compactifications
exist.  The general case uses Rydh's preliminary compactification
theorem \cite[Theorem~F]{RydhCompactification}, whereas the case of
quasi-projective coarse space starts from Kresch's published result
\cite[Theorems~4.4 and~5.3]{KreschGeometry}.  This existence statement
is logically separate from compactification independence; see
\cref{prop:good-stack-compactification-existence}.

\begin{corollary}For any choice of NC rational stack compactifications of the sectors,
the resulting filtration is the filtration \(F_{\mathrm{orb}}\)
above.  Hence
\[
 f_{\mathrm{orb}}^{\lambda,\mu}(\mathscr U,w)
 =
 \dim\operatorname{Gr}_{F_{\mathrm{orb}}}^{\lambda}
 H_{\mathrm{dR,orb}}^{\lambda+\mu}(\mathscr U,w)
\]
depends only on \((\mathscr U,w)\).
\end{corollary}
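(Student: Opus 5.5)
The corollary follows from \cref{thm:introduction-nc-rational-compactification-independence} applied one inertia sector at a time. Write \(I\mathscr U=\coprod_{\nu\in\Lambda}\mathscr U_\nu\) and \(w_\nu=e^*w|_{\mathscr U_\nu}\). The plan has three steps.

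First, I will check that each pair \((\mathscr U_\nu,w_\nu)\) satisfies the hypotheses of the theorem.
\begin{enumerate}[label=\textup{(\roman*)},leftmargin=*]
\item Each sector \(\mathscr U_\nu\) is a smooth separated Deligne--Mumford stack of finite type over \(\mathbb C\). Indeed, the inertia stack of a smooth separated Deligne--Mumford stack is again smooth and separated. Since \(\mathscr U\) is of finite type, \(I\mathscr U\to\mathscr U\) is finite and unramified, so \(I\mathscr U\) is of finite type and \(\Lambda\) is finite.
\item The function \(w_\nu\) is regular, being the pullback of a regular function.
\end{enumerate}
So each \((\mathscr U_\nu,w_\nu)\) is a smooth separated Deligne--Mumford Landau--Ginzburg model of finite type, and the theorem applies to it.

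Second, I will compare with the Harder--Lee filtration. Fix any two choices of NC rational stack compactifications \(\mathscr X_\nu\) and \(\mathscr X'_\nu\), one for each sector. By the theorem, for every \(k\) and every rational \(\lambda'\),
\[
F_{\mathrm{irr},\mathscr X_\nu}^{\lambda'}H^k(\mathscr U_\nu,w_\nu)=F_{\mathrm{irr},\mathscr X'_\nu}^{\lambda'}H^k(\mathscr U_\nu,w_\nu).
\]
Now apply this with \(\lambda'=\lambda-a_\nu\) and \(k=q-2a_\nu\), and take the direct sum over the \(\nu\) with \(q-2a_\nu\in\mathbb Z\). The orbifold filtration defined from either choice then coincides summand by summand.

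One of the admissible choices is the one Harder--Lee actually use: the twisted sectors of a nondegenerate projective toroidal orbifold compactification, each equipped with Yu's complexes. So I must check that the sector compactifications in their setup are NC rational stack compactifications in the sense of \cref{def:nc-rational-stack-pair}, and that their filtered complexes agree with \(F_{\mathrm{irr}}^\lambda\mathcal K^\bullet\).

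\begin{itemize}
\item \emph{Boundary condition.} The inertia sectors of a smooth toroidal orbifold with normal crossings boundary are closed toric substacks. Their boundary is the restriction of the toric boundary, which is again normal crossings.
\item \emph{Pole condition.} Nondegeneracy of \(w\) gives exactly the local condition: near the poles, \(\mathscr Z\) is smooth and \(\mathscr Z+\mathscr D\) has normal crossings.
\item \emph{Agreement of complexes.} Harder--Lee define their complexes through local quotient charts, using Yu's formula. Our \(F_{\mathrm{irr}}^\lambda\mathcal K^\bullet\) is given by the same formula, written on a chart and descended, so the two complexes agree.
\end{itemize}
This verification is the main obstacle. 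The abstract theorem is already available; what remains is to translate Harder--Lee's toroidal nondegeneracy for each sector into the hypotheses of \cref{def:nc-rational-stack-pair}.

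Third, the Hodge numbers follow formally. Since \(F_{\mathrm{orb}}^\lambda\) is independent of all choices, so is each graded piece \(\operatorname{Gr}_{F_{\mathrm{orb}}}^\lambda H_{\mathrm{dR,orb}}^{\lambda+\mu}\), and hence so is its dimension \(f_{\mathrm{orb}}^{\lambda,\mu}(\mathscr U,w)\). The graded piece is defined as the quotient of \(F^\lambda\) by \(F^{>\lambda}\). For fixed \(q\), only finitely many sectors and finitely many jumps of the filtration occur, so \(F^{>\lambda}\) equals \(F^{\lambda+\epsilon}\) for small \(\epsilon>0\). This \(F^{\lambda+\epsilon}\) is also independent of choices.
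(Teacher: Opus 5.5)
Your argument is correct and matches the paper's proof: apply \cref{thm:introduction-nc-rational-compactification-independence} to each sector \((\mathscr U_\nu,w_\nu)\) at level \(\lambda-a_\nu\) and degree \(q-2a_\nu\), take the finite direct sum over \(\Lambda\), and conclude that \(F^{>\lambda}\), hence \(\operatorname{Gr}^\lambda\) and its dimension, are then choice-independent too. The Harder--Lee verification you call the main obstacle is not needed for this corollary; the paper handles it separately in \cref{lem:diagonal-inertia-admissibility}\textup{(ii)} and \cref{thm:harder-lee-orbifold-comparison}, and your sketch of it glosses over three points treated there: inertia components lying in the boundary must be discarded, nondegeneracy must be checked for the restricted potential \(w_\nu\) with its actual polar divisor (allowing \(w_\nu\equiv0\)) rather than for \(w\), and proper GAGA is needed to pass from Harder--Lee's analytic complexes to the algebraic ones.
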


This follows by applying the main theorem to every sector and taking
the direct sum.

On a fixed nondegenerate projective toroidal orbifold
compactification in the sense of Harder--Lee,
\cref{thm:harder-lee-orbifold-comparison} identifies their filtration
with the sectorwise filtration above.  Thus the orbifold irregular
Hodge numbers in their mirror formula depend only on
\((\mathscr U,w)\), and not on the chosen compactification. 
The numbers in the Harder--Lee formula become
invariants of the Landau--Ginzburg pair.  This gives a basis for their
further use in mirror symmetry.

 \section{Compactified Landau--Ginzburg Models}
\label{sec:compactified-lg-models}

\subsection{Stacks, divisors, and differential forms}
\label{subsec:dm-conventions}

We work over \(\C\).  All schemes and algebraic stacks are of finite
type over \(\C\), and all algebraic stacks are separated
Deligne--Mumford stacks unless stated otherwise.  For such a stack
\(\mathscr X\), we write
\[
 p_{\mathscr X}\colon\mathscr X\longrightarrow X
\]
for its coarse moduli morphism; the coarse space \(X\) is an algebraic
space in general.  A divisor on a smooth stack means a Cartier divisor
on the stack, so its components and multiplicities are measured by
stack valuations rather than by valuations on \(X\).

\begin{definition}[NC, labeled SNC, and SNC divisors on a stack]
\label{def:stack-nc-snc-divisors}
Let \(\mathscr X\) be a smooth Deligne--Mumford stack and let
\(\mathscr D\) be a reduced effective divisor.
\begin{enumerate}[label=\textup{(\roman*)}]
\item The divisor \(\mathscr D\) is \emph{normal crossing} (NC) if,
  at every geometric point, there is an \'etale scheme chart with
  regular parameters \(x_1,\ldots,x_n\) on which
  \[
   \mathscr D=(x_1\cdots x_r=0)
  \]
  for some \(0\leq r\leq n\).
\item A \emph{finite labeled SNC presentation} of \(\mathscr D\) is
  a finite family
  \begin{equation*}
   \boldsymbol{\mathscr D}=\{\mathscr D_i\}_{i\in I}
  \end{equation*}
  of nonempty connected smooth effective Cartier divisors such that
  \(\mathscr D=\bigcup_{i\in I}\mathscr D_i\) as a reduced divisor
  and every intersection \(\bigcap_{j\in J}\mathscr D_j\) is empty
  or smooth of codimension \(|J|\).  The divisor together with this
  chosen presentation is a \emph{labeled SNC divisor}.
\item The divisor \(\mathscr D\) is \emph{SNC} if it admits a finite
  labeled SNC presentation.  Thus SNC is a property, whereas
  labeled SNC includes the presentation as data.
\end{enumerate}
An SNC divisor is NC, but the converse can fail because a branch may
self-intersect or stabilizers may permute local branches.  This is the
stack-theoretic SNC convention of
\cite[Definition~3.1]{BerghRydh}.  An ordering of the finite label set
will be chosen only when required by weak factorization.
\end{definition}

Differentials are taken on the lisse--\'etale site.  We write
\[
 \Omega_{\mathscr X}^a
 :=\bigwedge^a\Omega_{\mathscr X/\C}^1;
\]
these sheaves are locally free when \(\mathscr X\) is smooth.  For an
NC divisor \(\mathscr D\), logarithmic forms are obtained by \'etale
descent from the usual sheaves on scheme charts.  In the coordinates
of \cref{def:stack-nc-snc-divisors},
\(\Omega_{\mathscr X}^1(\log\mathscr D)\) has basis
\[
 \frac{dx_1}{x_1},\ldots,\frac{dx_r}{x_r},
 dx_{r+1},\ldots,dx_n,
 \qquad
 \Omega_{\mathscr X}^a(\log\mathscr D)
 =\bigwedge^a\Omega_{\mathscr X}^1(\log\mathscr D).
\]
If \(\mathscr E\) is an effective divisor supported on
\(\mathscr D\), set
\[
 \mathcal O_{\mathscr X}(*\mathscr E)
 :=\varinjlim_{m\geq0}\mathcal O_{\mathscr X}(m\mathscr E),
 \qquad
 \Omega_{\mathscr X}^a(\log\mathscr D)(*\mathscr E)
 :=\Omega_{\mathscr X}^a(\log\mathscr D)
   \otimes\mathcal O_{\mathscr X}(*\mathscr E).
\]
Thus the latter sheaf permits arbitrary meromorphic poles along
\(|\mathscr E|\), logarithmic poles along the remaining boundary, and
no other poles.  The analogous notation
\(\Omega_{\mathscr X}^a(*\mathscr E)\) omits the logarithmic boundary.
These constructions depend only on \(|\mathscr E|\) and commute with
\'etale pullback.  If
\(j\colon\mathscr X\setminus|\mathscr E|\hookrightarrow\mathscr X\),
then, as may be checked on an \'etale atlas,
\[
 \Omega_{\mathscr X}^a(*\mathscr E)
 \simeq j_*\Omega_{\mathscr X\setminus|\mathscr E|}^a.
\]

In characteristic zero every finite stabilizer is linearly reductive.
Hence the Deligne--Mumford stacks considered here are tame, and
coarse-moduli pushforward is exact on quasi-coherent sheaves by
\cite[Definition~3.1 and Theorem~3.2(a),(b)]{AOV}.  We use coarse
spaces only for the explicitly stated projectivity conditions; the
filtered complexes remain on the stack.

\subsection{NC rational pairs and their compactifications}
\label{subsec:nc-rational-stack-compactifications}

\begin{definition}[Smooth Landau--Ginzburg model]
\label{def:smooth-stack-lg-model}
A \emph{smooth Deligne--Mumford Landau--Ginzburg model} is a pair
\((\mathscr U,w_{\mathscr U})\), where \(\mathscr U\) is smooth and
\(w_{\mathscr U}\in H^0(\mathscr U,\mathcal O_{\mathscr U})\).
\end{definition}

\begin{definition}[NC rational stack models]
\label{def:nc-rational-stack-pair}
An \emph{NC rational stack pair} is a tuple
\[
 (\mathscr X,\mathscr D,\mathscr Z,\mathscr P,w)
\]
with the following properties.
\begin{enumerate}[label=\textup{(\roman*)}]
\item The stack \(\mathscr X\) is smooth.
\item The divisor \(\mathscr D\subset\mathscr X\) is reduced NC.  We
  put
  \[
   \mathscr U_{\mathscr X}:=\mathscr X\setminus|\mathscr D|,
   \qquad
   j_{\mathscr X}\colon\mathscr U_{\mathscr X}\hookrightarrow
   \mathscr X.
  \]
\item The rational map
  \(w\colon\mathscr X\dashrightarrow\mathbb P^1\) restricts to a
  regular function on \(\mathscr U_{\mathscr X}\).  On each component
  where \(w\not\equiv0\),
  \begin{equation*}
   \divisor(w)=\mathscr Z-\mathscr P,
  \end{equation*}
  where \(\mathscr Z\) and \(\mathscr P\) are effective and have no
  common prime component; on a component where \(w=0\), set
  \(\mathscr Z=\mathscr P=0\).  The regularity on
  \(\mathscr U_{\mathscr X}\) is equivalent to
  \(|\mathscr P|\subset|\mathscr D|\).
\item On some neighbourhood \(\mathscr V\) of \(|\mathscr P|\), the
  divisor \(\mathscr Z|_{\mathscr V}\) is empty or smooth and
  \begin{equation*}
   \mathscr Z|_{\mathscr V}+\mathscr D|_{\mathscr V}
  \end{equation*}
  is reduced NC.
\end{enumerate}
Condition~\textup{(iv)} is called \emph{nondegeneracy along the polar
locus}; no condition is imposed on the zero divisor away from that
locus.

The following adjectives and subclasses will be used.
\begin{enumerate}[label=\textup{(\alph*)}]
\item The pair is \emph{strict} if its boundary is equipped with a
  finite labeled SNC presentation.  Thus strictness includes the
  labels as data, not merely their existence.
\item The pair is \emph{morphic} if \(w\) extends to a morphism
  \(\mathscr X\to\mathbb P^1\).
  For a morphic pair, polar nondegeneracy is automatic because the zero
  and pole fibres are disjoint.

\item An \emph{NC rational stack compactification}
  of \((\mathscr U,w_{\mathscr U})\) is an NC rational stack pair with
  \(\mathscr X\) proper, together with an isomorphism
  \[
   (\mathscr U,w_{\mathscr U})
   \xrightarrow{\sim}
   (\mathscr U_{\mathscr X},w|_{\mathscr U_{\mathscr X}}).
  \]
  Equivalently, \(\mathscr U\hookrightarrow\mathscr X\) is dense on
  every component and
  \(\mathscr D=(\mathscr X\setminus\mathscr U)_{\mathrm{red}}\).
  A compactification whose underlying pair is morphic is a
  \emph{morphic NC model}; it is \emph{strict} when the underlying
  pair is strict.
\item A compactification is \emph{projective}
  if the coarse moduli space of \(\mathscr X\) is a projective scheme
  over \(\C\).  This is stronger than properness of the stack.
\item A \emph{good stack compactification}
  is a strict morphic NC rational stack compactification.  A
  \emph{projective good stack compactification}
  is one that is also projective.
\end{enumerate}
\end{definition}

When no confusion is possible, we suppress \(\mathscr Z\) and
\(\mathscr P\) and write an NC rational pair as
\((\mathscr X,\mathscr D,w)\).

The polar nondegeneracy condition is equivalently expressed by the
following Chen--Yu normal form
\cite[\S2.1, Definition and formula~(2), p.~3]{ChenYu}.  At a point of
\(|\mathscr P|\), after passing to a sufficiently small \'etale chart,
there are coordinates \(x_1,\ldots,x_\ell,y_1,\ldots,y_m,z_1,\ldots\)
and positive integers \(e_i\) such that
\begin{equation}
 \mathscr D=(x_1\cdots x_\ell y_1\cdots y_m=0),
 \qquad
 w=\frac{1}{x_1^{e_1}\cdots x_\ell^{e_\ell}}
 \quad\text{or}\quad
 w=\frac{z_1}{x_1^{e_1}\cdots x_\ell^{e_\ell}}.
 \label{eq:chen-yu-nondegenerate-local-form}
\end{equation}
Indeed, the polar branches supply the \(x_i\), while a nonempty zero
divisor supplies one parameter transverse to every boundary branch;
an \'etale root removes the remaining unit.  Conversely, either local
form implies condition~\textup{(iv)}.  In particular,
\(z_1^a/x_1^b\) with \(a>1\) is excluded.

\begin{lemma}\label{lem:rational-map-indeterminacy-locus}
\label{lem:indeterminacy-logarithmic-conormal-line}
Let
\((\mathscr X,\mathscr D,\mathscr Z,\mathscr P,w)\)
be an NC rational stack pair, and set
\[
 I=I(w):=(|\mathscr Z|\cap|\mathscr P|)_{\mathrm{red}}.
\]
\begin{enumerate}[label=\textup{(\roman*)}]
\item The reduced indeterminacy locus of
  \(w\colon\mathscr X\dashrightarrow\mathbb P^1\) is \(I\).
  Consequently, \(w\) is a morphism if and only if \(I=\varnothing\).
\item Suppose the pair is strict.  If \(H\) is a connected labeled
  boundary component occurring in \(\mathscr P\) with coefficient
  \(e>0\), then every connected component \(C\) of
  \(\mathscr Z\times_{\mathscr X}H\) is a smooth closed substack,
  regularly immersed in codimension two and having normal crossings
  with the labeled boundary.  There are finitely many such
  components, and at every point of \(C\) one can choose \'etale
  coordinates in which, up to a unit,
  \begin{equation}
   H=(x_1=0),\qquad C=(z=x_1=0),\qquad
   w=\frac{z}{x_1^e x_2^{e_2}\cdots x_\ell^{e_\ell}}.
   \label{eq:one-step-chen-yu-local-center}
  \end{equation}
\item If \(I\ne\varnothing\), then
  \(I\hookrightarrow\mathscr X\) is a regular immersion of
  codimension two, although \(I\) need not be smooth.  With
  \(\mathcal I_I\) its ideal, define
  \begin{equation*}
   \mathcal T_{I/(\mathscr X,\mathscr D)}^\vee
   :=\operatorname{im}\left(
    \mathcal I_I/\mathcal I_I^2\xrightarrow{d}
    \Omega_{\mathscr X}^1(\log\mathscr D)|_I
   \right).
  \end{equation*}
  On a neighbourhood of \(I\), the map
  \[
   \mathcal O_{\mathscr X}(-\mathscr P)
   \xrightarrow{dw}\Omega_{\mathscr X}^1(\log\mathscr D)
  \]
  is a locally split injection.  If its image is \(\Theta_w\), then
  \begin{equation*}
   \Theta_w|_I
   =\mathcal T_{I/(\mathscr X,\mathscr D)}^\vee
  \end{equation*}
  is a line subbundle.  Moreover, the ordinary conormal line of the
  smooth numerator divisor near \(I\) maps isomorphically onto it:
  \begin{equation*}
   N_{\mathscr Z/\mathscr X}^\vee|_I
   \xrightarrow[\sim]{\ d_{\log}\ }
   \mathcal T_{I/(\mathscr X,\mathscr D)}^\vee.
  \end{equation*}
  These statements commute with \'etale base change.
\end{enumerate}
\end{lemma}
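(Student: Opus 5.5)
All three statements are étale local on \(\mathscr X\) and compatible with étale base change. I would therefore pass to an étale scheme chart and work with the Chen--Yu normal form \eqref{eq:chen-yu-nondegenerate-local-form}. Away from \(|\mathscr P|\) the function \(w\) is regular, so the only interesting points lie on \(|\mathscr P|\). On some neighbourhood \(\mathscr V\) of \(|\mathscr P|\), condition~(iv) lets me write, étale locally and up to a unit,
\[
 w=u\,z_1^{\epsilon}/(x_1^{e_1}\cdots x_\ell^{e_\ell}),\qquad \epsilon\in\{0,1\}.
\]
Here \(x_1,\ldots,x_\ell,y_1,\ldots,y_m,z_1,\ldots\) are regular parameters. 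I would first absorb the unit \(u\) into \(z_1\) when \(\epsilon=1\), which changes coordinates but keeps transversality. When \(\epsilon=0\), I would extract an \(e\)-th root of \(u\) after an étale cover, with \(e=\gcd(e_i)\), or simply keep \(u\) as a unit; the unit does no harm in any of the computations below.

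For (i), note that \(w\) is a morphism near a point exactly when locally \(w\) or \(1/w\) is regular. If \(\epsilon=0\), then \(1/w\) is regular. If \(\epsilon=1\) and \(z_1\) is a unit, the same holds. If \(z_1\) vanishes at the point, both the numerator and the denominator vanish there. Since \(z_1\) is not divisible by any \(x_i\), neither \(w\) nor \(1/w\) is regular, because the coordinate ring is a UFD. So the indeterminacy locus is exactly \(|\mathscr Z|\cap|\mathscr P|\), with the reduced structure. The consequence "morphism if and only if \(I=\varnothing\)" is immediate.

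For (ii), strictness makes \(H\) a smooth connected labeled component; take \(H=(x_1=0)\). Then \(\mathscr Z\times_{\mathscr X}H\) is locally \((z_1=x_1=0)\). This is smooth of codimension two, cut out by part of a regular system of parameters, and hence regularly immersed. It meets the other branches \(x_i, y_j\) transversally, which gives normal crossings with the labeled boundary. Finiteness follows because a quasi-compact stack has finitely many connected components. The local form \eqref{eq:one-step-chen-yu-local-center} is the normal form above with \(x_1\) singled out. The one point needing care is that the coefficient of \(H\) in \(\mathscr P\) really is the exponent \(e_1\) on the chart. This holds because \(H\) is a single branch locally, so multiplicities are read from stack valuations compatibly with étale charts.

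For (iii), \(I\) is locally \((z_1=x_1\cdots x_\ell=0)\), where \(x_1\cdots x_\ell\) is the product of the polar branches through the point. These two equations form a regular sequence, since \(z_1\) is a parameter independent of the \(x_i\). This gives a regular immersion of codimension two, although \(I\) may be singular when \(\ell\ge2\). For the key computation I would write
\[
 dw = w\Bigl(\tfrac{dz_1}{z_1}-\sum_i e_i\tfrac{dx_i}{x_i}\Bigr),
\]
so that \(x^{e}\,dw = dz_1 - z_1\sum e_i\,dx_i/x_i\) up to a unit correction \(z_1\,du/u\), after absorbing the unit. Since \(\mathcal O(-\mathscr P)\) is generated by \(x^{e}=\prod x_i^{e_i}\), the image \(\Theta_w\) is generated by this form. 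Its \(dz_1\)-coefficient is a unit in the log basis, so the map is a locally split injection. When \(\epsilon=0\) the form is \(\sum e_i\,dx_i/x_i\), still nonvanishing, but that case lies off \(I\).

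Restricting to \(I\), where \(z_1=0\), the generator becomes \(dz_1|_I\). The conormal module \(\mathcal I_I/\mathcal I_I^2\) is generated by \(z_1\) and \(g=x_1\cdots x_\ell\). Its image under \(d\) in the log forms is spanned by \(dz_1\) and \(dg=g\sum dx_i/x_i\), and the latter vanishes on \(I\) because \(g|_I=0\). So \(\mathcal T^\vee_{I/(\mathscr X,\mathscr D)}\) is the line spanned by \(dz_1|_I\), which equals \(\Theta_w|_I\). It is a line subbundle because \(dz_1\) is part of a basis of \(\Omega^1(\log\mathscr D)\). Finally, \(N^\vee_{\mathscr Z/\mathscr X}|_I\) is generated by \(z_1\mapsto dz_1\), and this gives the stated isomorphism. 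Every construction is intrinsic (images of canonical maps), so it descends from charts and commutes with étale base change.

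I expect the main obstacle to be bookkeeping rather than the computation itself. One must justify the normal form with units, check that \(I\) is well defined independently of the chart, and treat the non-smooth \(I\) correctly. In particular, \(d\) on \(\mathcal I/\mathcal I^2\) should be landing in log forms restricted to \(I\), not in \(\Omega^1_I\).
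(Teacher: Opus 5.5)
Your proposal is correct and follows essentially the paper's route: you reduce étale-locally to the Chen--Yu normal form, read off (ii) from the second normal form, and for (iii) use the regular sequence $(z, x_1\cdots x_\ell)$, the vanishing of $d(x_1\cdots x_\ell)=x_1\cdots x_\ell\sum_i dx_i/x_i$ on $I$, and the unit $dz$-coefficient of $x^e\,dw$. The only minor variation is in (i): you use the normal form and the fact that $w$ extends at a point exactly when $w$ or $1/w$ lies in the local UFD. The paper instead handles an arbitrary coprime fraction $A/B$ in a regular local domain by a valuation and normality argument, which does not need the normal form.
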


\begin{proof}
For \textup{(i)}, work in a regular local domain \(R\) on an \'etale
chart and write \(w=A/B\), where \(A,B\in R\) have no common
height-one factor.  The map \([A:B]\) is defined away from their
common zero locus.  Conversely, suppose that it extends at a point
where \(A,B\) both vanish, and write the extension as \([a:b]\) with
\((a,b)=R\).  Then \(A=qa\) and \(B=qb\) for some
\(q\in\operatorname{Frac}(R)^\times\).  At each height-one prime, the
minimum of the valuations of each pair is zero, hence \(v(q)=0\).
Normality applied to \(q\) and \(q^{-1}\) makes \(q\) a unit, a
contradiction.

For \textup{(ii)}, the finite-type closed stack
\(\mathscr Z\times_{\mathscr X}H\) has finitely many open-and-closed
connected components.  The second normal form in
\eqref{eq:chen-yu-nondegenerate-local-form} gives
\eqref{eq:one-step-chen-yu-local-center}; it also shows directly that
each component is smooth of codimension two and transverse to every
labeled boundary stratum.  These scheme-theoretic intersections and
their connected components are intrinsic on the stack.

For \textup{(iii)}, the assertion is \'etale-local.  At a point of
\(I\), write
\[
 \mathscr D=(x_1\cdots x_\ell y_1\cdots y_m=0),\qquad
 \mathscr P=\sum_{i=1}^\ell e_i(x_i=0),\qquad
 w=\frac{uz}{g},\qquad
 g=\prod_{i=1}^\ell x_i^{e_i},
\]
where \(u\) is a unit.  Put \(q=x_1\cdots x_\ell\).  Then
\(\mathcal I_I=(z,q)\); this is a regular sequence, although
\(R/(z,q)\) can be singular.  Since
\[
 dq=q\sum_{i=1}^\ell\frac{dx_i}{x_i},
\]
the logarithmic conormal map sends \(\bar z\) to \(dz\) and
\(\bar q\) to zero.  Hence
\(\mathcal T_{I/(\mathscr X,\mathscr D)}^\vee=\mathcal O_I\,dz\).
The same calculation identifies the ordinary conormal line of the
numerator \((z=0)\) with this line.

Finally, \(g\) trivializes
\(\mathcal O_{\mathscr X}(-\mathscr P)\), and
\[
 g\,dw
 =u\,dz+z\,du-uz\sum_{i=1}^\ell e_i\frac{dx_i}{x_i}.
\]
Its \(dz\)-coefficient is a unit near \(I\), so projection to that
coordinate splits the map after shrinking.  Restriction to \(I\)
gives \(\Theta_w|_I=\mathcal O_I\,dz\).  Every construction used is
intrinsic and commutes with \'etale pullback, so the local statements
descend.
\end{proof}

 \section{Yu--Kontsevich Complexes, Orbifold Sectors, and Functorial Comparison}
\label{sec:yu-kontsevich-framework}

Let \((\mathscr U,w)\) be a smooth Deligne--Mumford
Landau--Ginzburg model in the sense of
\cref{def:smooth-stack-lg-model}.  Put
\[
 \nabla_w:=d+dw\wedge.
\]
The exponentially twisted de Rham complex and its cohomology are
\begin{equation*}
 (\Omega_{\mathscr U}^\bullet,\nabla_w),
 \qquad
 H_{\mathrm{dR}}^k(\mathscr U,w)
 :=\mathbb H^k\!\left(
   \mathscr U,\Omega_{\mathscr U}^\bullet,\nabla_w
 \right).
\end{equation*}
For an NC rational stack pair
\((\mathscr X,\mathscr D,\mathscr Z,\mathscr P,w)\), write
\(j\colon\mathscr U_{\mathscr X}\hookrightarrow\mathscr X\) and set
\begin{equation*}
 \mathcal K_{\mathscr X,w}^\bullet
 :=\left(
   \Omega_{\mathscr X}^\bullet(\log\mathscr D)(*\mathscr P),
   \nabla_w
 \right).
\end{equation*}
The sheaf-theoretic constructions below are made on the
lisse--\'etale site.  Comparisons between the logarithmic lattices
can be checked after restriction to an \'etale scheme atlas;
restriction is exact and conservative.  Statements involving
\(R\pi_*\), by contrast, will use proper direct image and flat base
change explicitly.
The terms are quasi-coherent \(\mathcal O\)-modules.  The differential
is only \(\C\)-linear.  Thus complexes are taken in sheaves of
\(\C\)-vector spaces; degreewise direct-image arguments are made in
quasi-coherent modules.

\begin{proposition}\label{prop:stack-logarithmic-comparison}
Restriction to the open model induces a canonical quasi-isomorphism
\begin{equation}
 r_{\mathscr X}\colon
 \mathcal K_{\mathscr X,w}^\bullet
 \xrightarrow{\ \sim\ }
 Rj_*\left(\Omega_{\mathscr U_{\mathscr X}}^\bullet,\nabla_w\right).
 \label{eq:stack-logarithmic-comparison-sheaves}
\end{equation}
Consequently,
\begin{equation}
 R\Gamma(\mathscr X,\mathcal K_{\mathscr X,w}^\bullet)
 \simeq
 R\Gamma\!\left(
  \mathscr U_{\mathscr X},
  \Omega_{\mathscr U_{\mathscr X}}^\bullet,\nabla_w
 \right).
 \label{eq:stack-logarithmic-comparison}
\end{equation}
\end{proposition}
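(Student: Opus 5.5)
The argument is étale-local, and it proceeds in two reductions. Both sides of \eqref{eq:stack-logarithmic-comparison-sheaves} are formed compatibly with étale pullback: the lattice terms commute with étale base change, and so does \(Rj_*\), because \(j\) is quasi-compact and étale base change is flat. Being a quasi-isomorphism may therefore be checked on an étale scheme atlas.

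\textbf{First reduction: the quasi-coherent part.} Since \(j\) is an affine morphism on an atlas (the complement of a Cartier divisor), \(Rj_*\) of each quasi-coherent term is just \(j_*\). Hence \(Rj_*(\Omega^\bullet_{\mathscr U},\nabla_w)\) is computed by the complex
\[
 \bigl(j_*\Omega^\bullet_{\mathscr U},\nabla_w\bigr)
 =\bigl(\Omega_{\mathscr X}^\bullet(*\mathscr D),\nabla_w\bigr).
\]
The claim thus reduces to showing that the inclusion
\[
 \bigl(\Omega^\bullet_{\mathscr X}(\log\mathscr D)(*\mathscr P),\nabla_w\bigr)
 \hookrightarrow
 \bigl(\Omega^\bullet_{\mathscr X}(*\mathscr D),\nabla_w\bigr)
\]
is a quasi-isomorphism. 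The global statement \eqref{eq:stack-logarithmic-comparison} then follows by applying \(R\Gamma(\mathscr X,-)\) and using \(R\Gamma(\mathscr X,Rj_*-)=R\Gamma(\mathscr U_{\mathscr X},-)\).

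\textbf{Second reduction: the non-polar components.} Write \(\mathscr D=\mathscr D'+\mathscr D''\), where \(\mathscr D'\) collects the components of \(|\mathscr P|\) and \(\mathscr D''\) the remaining ones. Filter by the order of pole along \(\mathscr D''\). Near a point of \(\mathscr D''\) away from \(|\mathscr P|\), the function \(w\) is regular, so \(\nabla_w\) differs from \(d\) by the \(\mathcal O\)-linear term \(dw\wedge\). Passing from logarithmic poles to arbitrary poles along \(\mathscr D''\) is then the classical Deligne computation. On each graded piece \(\mathcal O(k\mathscr D'')/\mathcal O((k-1)\mathscr D'')\) with \(k\ge 1\), the residue and Koszul argument shows the quotient complex is acyclic. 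The reason is that the coefficient of \(dx_i/x_i\) acts by multiplication by \(-k\ne0\) on the symbol, and \(dw\wedge\) does not affect the leading term when \(w\) is regular. At points of \(\mathscr D''\cap|\mathscr P|\) the same filtration works. We use the Chen--Yu normal form \eqref{eq:chen-yu-nondegenerate-local-form}, which gives \(w=x^{-e}\) or \(zx^{-e}\) with the \(y_j\) not appearing. The factor \(e^{w}\) is then regular in the \(y\)-directions, so the \(y\)-graded pieces are again killed by the Euler-type homotopy \(y_j\partial_{y_j}\) (contraction with \(y_j\partial_{y_j}\) combined with the Lie derivative). This reduces the problem to \(\mathscr D=\mathscr D'\), i.e. to comparing logarithmic forms with \(*\mathscr P\) against \(\Omega^\bullet(*\mathscr P)\) along the polar components.

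\textbf{Main step: the polar components.} Now filter \(\Omega^\bullet(*\mathscr P)\) by the subcomplexes \(\Omega^\bullet(\log \mathscr D')(*\mathscr P)\), which already contain all poles. In fact \(\Omega^a(*\mathscr P)=\Omega^a(\log\mathscr D')(*\mathscr P)\) as sheaves, because \(dx_i=x_i\cdot dx_i/x_i\) and multiplication by \(x_i^{-1}\) is allowed. So along \(|\mathscr P|\) the two complexes coincide degreewise and the inclusion is an isomorphism. The expected obstacle is therefore only the bookkeeping in the second reduction at mixed points, and specifically making the Euler-homotopy argument \'etale-descend. To handle this, I would phrase the graded pieces intrinsically via residues along \(\mathscr D''\), which commute with étale pullback, and conclude by conservativity of restriction to the atlas.
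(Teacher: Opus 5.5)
Your proof is correct, but it is organized differently from the paper's. The paper never compares logarithmic with meromorphic forms on all of $\mathscr X$. It restricts to $\mathscr V=\mathscr X\setminus|\mathscr P|$, where $w$ is regular and $(\mathcal O_{\mathscr V},d+dw)$ is a logarithmic connection with zero residues along $\mathscr D|_{\mathscr V}$, and quotes Deligne's logarithmic comparison theorem there. It then applies $i_*$ for the affine immersion $i\colon\mathscr V\hookrightarrow\mathscr X$, using $i_*\Omega^a_{\mathscr V}(\log\mathscr D|_{\mathscr V})=\Omega^a_{\mathscr X}(\log\mathscr D)(*\mathscr P)$.

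You instead use affineness of $j$ itself to replace $Rj_*$ by $(\Omega^\bullet_{\mathscr X}(*\mathscr D),\nabla_w)$. You then reprove the algebraic Deligne lemma by a pole-order filtration along the nonpolar part $\mathscr D''$, and observe that along $|\mathscr P|$ logarithmic and meromorphic forms already agree after inverting $\mathscr P$. Your route is self-contained and purely algebraic. The paper's route never has to treat points of $\mathscr D''\cap|\mathscr P|$ separately, because they are absorbed by the exactness of $i_*$.

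Three refinements to your bookkeeping:
\begin{enumerate}
\item The Chen--Yu normal form is not needed at mixed points. For any $w\in\mathcal O(*\mathscr P)$ one has $y_j\partial_{y_j}w\in y_j\,\mathcal O(*\mathscr P)$, and this is all the residue argument uses. So polar nondegeneracy plays no role here, as in the paper.
\item The ``descent of the Euler homotopy'' is not a real obstacle. The inclusion of complexes is globally defined, and you already noted that being a quasi-isomorphism can be checked on an \'etale atlas. Filtrations and homotopies may therefore be chosen purely locally.
\item On that atlas you should raise one branch coefficient at a time. Where two nonpolar branches $y_1,y_2$ cross, the class of $y_1^{1-k}y_2^{-k}$ lies in $\mathcal O(k\mathscr D'')/\mathcal O((k-1)\mathscr D'')$, and $y_1\partial_{y_1}$ acts on it by $1-k$, which vanishes for $k=1$. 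With the branchwise filtration, $\iota_{y_j\partial_{y_j}}$ is a homotopy from $0$ to multiplication by $-k_j\neq0$ on each graded piece, which gives acyclicity.
\end{enumerate}
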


\begin{proof}
Let
\[
 i\colon\mathscr V:=\mathscr X\setminus|\mathscr P|
 \hookrightarrow\mathscr X,
 \qquad
 \mathscr H:=\mathscr D|_{\mathscr V},
 \qquad
 j_{\mathscr V}\colon\mathscr U_{\mathscr X}\hookrightarrow
 \mathscr V.
\]
The function \(w\) is regular on \(\mathscr V\): this may be checked
on a normal affine \'etale chart, where nonnegative valuations at all
height-one primes imply regularity.  Hence
\((\mathcal O_{\mathscr V},d+dw)\) is an integrable logarithmic
connection along \(\mathscr H\).  Its residues are zero.  Indeed, in
an NC coordinate \(x\) the coefficient of \(dx/x\) in \(dw\) is
\(x\partial w/\partial x\), which vanishes on \((x=0)\).

After passing to \'etale scheme charts, Deligne's logarithmic
comparison theorem
\cite[Chapter~II, Corollary~3.14(i)]{DeligneRegular} therefore gives
\begin{equation}
 \left(\Omega_{\mathscr V}^\bullet(\log\mathscr H),\nabla_w\right)
 \xrightarrow{\ \sim\ }
 R(j_{\mathscr V})_*
 \left(\Omega_{\mathscr U_{\mathscr X}}^\bullet,\nabla_w\right).
 \label{eq:deligne-comparison-on-nonpolar-open}
\end{equation}
The local comparisons descend because their canonical cone becomes
acyclic on a surjective \'etale atlas.  Finally, \(i\) is affine and,
on an NC chart, both sides of
\begin{equation*}
 i_*\Omega_{\mathscr V}^a(\log\mathscr H)
 =\Omega_{\mathscr X}^a(\log\mathscr D)(*\mathscr P)
\end{equation*}
are obtained by inverting the local equations of the polar branches.
Thus \(i_*\) is exact on the quasi-coherent terms in question and
transforms \eqref{eq:deligne-comparison-on-nonpolar-open} into
\eqref{eq:stack-logarithmic-comparison-sheaves}.  Applying derived
global sections proves the second assertion.
\end{proof}

\subsection{The Yu filtration}
\label{subsec:yu-filtered-complex}

Fix an NC rational stack pair and write
\(\mathscr P=\sum_i e_i\mathscr P_i\).  For \(c\in\Q\), all
rounding is coefficientwise:
\begin{equation*}
 \lfloor c\mathscr P\rfloor
 :=\sum_i\lfloor ce_i\rfloor\mathscr P_i,
 \qquad
 \lceil c\mathscr P\rceil
 :=\sum_i\lceil ce_i\rceil\mathscr P_i.
\end{equation*}

\begin{definition}[Yu filtered complex]
\label{def:stack-yu-filtered-complex}
For \(\lambda\in\Q\) and \(a\geq0\), define
\begin{equation*}
 \Firr^\lambda\mathcal K_{\mathscr X,w}^a
 :=
 \begin{cases}
  0,&a<\lceil\lambda\rceil,\\[3pt]
  \Omega_{\mathscr X}^a(\log\mathscr D)
   \bigl(\lfloor(a-\lambda)\mathscr P\rfloor\bigr),
   &a\geq\lceil\lambda\rceil.
 \end{cases}
\end{equation*}
\end{definition}

These terms form a subcomplex of
\(\mathcal K_{\mathscr X,w}^\bullet\).  Indeed, logarithmic
differentiation preserves every integral boundary twist, while
\[
 dw\in\Omega_{\mathscr X}^1(\log\mathscr D)(\mathscr P),
 \qquad
 \lfloor(a-\lambda)\mathscr P\rfloor+\mathscr P
 =\lfloor(a+1-\lambda)\mathscr P\rfloor.
\]
The resulting filtration is decreasing and left continuous, and its
jumps are locally discrete.  The definition requires neither
properness nor projectivity and, as a definition of complexes, does
not require polar nondegeneracy.

\begin{definition}[Cohomological Yu filtration]
\label{def:yu-filtration-on-stack-cohomology}
For \(\lambda\in\Q\), the filtration defined by the rational model is
\begin{equation}
 F_{\mathrm{Yu},\mathscr X}^\lambda
 H_{\mathrm{dR}}^k(\mathscr U_{\mathscr X},w)
 :=\operatorname{im}\!\left\{
 \mathbb H^k\!\left(
  \mathscr X,\Firr^\lambda\mathcal K_{\mathscr X,w}^\bullet
 \right)
 \longrightarrow
 H_{\mathrm{dR}}^k(\mathscr U_{\mathscr X},w)
 \right\},
 \label{eq:yu-filtration-on-stack-cohomology}
\end{equation}
where the target is identified by
\cref{prop:stack-logarithmic-comparison}.
\end{definition}

For any decreasing \(\Q\)-indexed filtration, denoted \(F\), on a
vector space or a complex, we use the right-limit convention
\begin{equation*}
 F^{>\lambda}:=\bigcup_{\substack{\lambda'>\lambda\\\lambda'\in\Q}}
 F^{\lambda'},
 \qquad
 \Gr_F^\lambda:=F^\lambda/F^{>\lambda}.
\end{equation*}
Applied to \eqref{eq:yu-filtration-on-stack-cohomology}, this is the
associated graded of the cohomological Yu filtration.  It
must be distinguished from the fixed-fractional integer slice below.

For \(\alpha\in\Q\cap[0,1)\), it is convenient to retain the integer slice
\begin{equation*}
 F_\alpha^p:=\Firr^{p-\alpha},
 \qquad
 \operatorname{gr}_{F_\alpha}^p
 :=\Firr^{p-\alpha}/\Firr^{p+1-\alpha}.
\end{equation*}
All transition comparisons below are made with \(\alpha\) fixed.
If \(\mathscr P\) is reduced, the complex filtration has jumps only
at integral indices and its full level quotient
\(\Gr_{F_{\mathrm{irr}}}^p\mathcal K_{\mathscr X,w}^\bullet\)
equals \(\operatorname{gr}_{F_0}^p\).  For a nonreduced polar
divisor there can be additional jumps strictly between two adjacent
integer-slice levels; in that case
\(\operatorname{gr}_{F_\alpha}^p\) generally aggregates several full
rational level quotients.  These are statements about filtered complexes:
identifying their hypercohomology with the associated graded of the
induced cohomological filtration additionally requires strictness,
for example an appropriate degeneration theorem.
The bounded coherent model is
\begin{equation*}
 \mathcal C_{\mathscr X,w}^\bullet
 :=\Firr^0\mathcal K_{\mathscr X,w}^\bullet
 =\left[
  \mathcal O_{\mathscr X}\longrightarrow
  \Omega_{\mathscr X}^1(\log\mathscr D)(\mathscr P)
  \longrightarrow\cdots\longrightarrow
  \Omega_{\mathscr X}^n(\log\mathscr D)(n\mathscr P)
 \right],
\end{equation*}
with differential \(\nabla_w\), and, for \(\lambda\in\Q\), with
induced filtration
\begin{equation*}
 \Firr^\lambda\mathcal C_{\mathscr X,w}^\bullet
 :=\mathcal C_{\mathscr X,w}^\bullet
   \cap\Firr^\lambda\mathcal K_{\mathscr X,w}^\bullet
 =\begin{cases}
   \mathcal C_{\mathscr X,w}^\bullet,&\lambda\leq0,\\
   \Firr^\lambda\mathcal K_{\mathscr X,w}^\bullet,&\lambda>0.
  \end{cases}
\end{equation*}

Below, all filtration indices are rational.  In particular, the
fractional index \(\alpha\) always lies in \(\Q\cap[0,1)\).

\subsection{Kontsevich lattices and the local comparison}
\label{subsec:stack-kontsevich-lattices}

The rounded Yu lattices need not be preserved by pullback when a zero
cancels part of a pole.  Kontsevich lattices retain precisely the
additional \(dw\)-direction needed to control this phenomenon.
Their Hodge-theoretic interpretation through mixed twistor
\(D\)-modules is developed in \cite{MochizukiKontsevich}.

\begin{definition}[Stack Kontsevich lattices]
\label{def:stack-kontsevich-lattice}
For \(\alpha\in\Q\cap[0,1)\) and \(a\geq0\), set
\begin{equation}
 \Omega_{\mathscr X,w}^a(\alpha)
 :=\ker\!\left\{
  \Omega_{\mathscr X}^a(\log\mathscr D)
    (\lfloor\alpha\mathscr P\rfloor)
  \xrightarrow{\ \nabla_w\ }
  \frac{\Omega_{\mathscr X}^{a+1}(*\mathscr D)}
  {\Omega_{\mathscr X}^{a+1}(\log\mathscr D)
    (\lfloor\alpha\mathscr P\rfloor)}
 \right\}.
 \label{eq:stack-kontsevich-lattice}
\end{equation}
The kernel is an \(\mathcal O_{\mathscr X}\)-submodule: the extra
term \(dh\wedge\omega\) in \(\nabla_w(h\omega)\) has the same
logarithmic pole order as \(\omega\).  Since \(\nabla_w^2=0\), the
lattices form the bounded Kontsevich complex
\[
 \bigl(\Omega_{\mathscr X,w}^\bullet(\alpha),\nabla_w\bigr).
\]
For \(p\in\Z\), write
\begin{equation*}
 \mathcal A_{\mathscr X,w}^{p,\alpha}
 :=\sigma_{\geq p}
 \bigl(\Omega_{\mathscr X,w}^\bullet(\alpha),\nabla_w\bigr),
\end{equation*}
where \(\sigma_{\geq p}\) is stupid truncation.
\end{definition}

\begin{proposition}[Local Kontsevich--Yu comparison]
\label{prop:local-kontsevich-yu-package}
\label{lemma:intersection-of-logarithmic-and-polar-forms}
Let \((\mathscr X,\mathscr D,\mathscr Z,\mathscr P,w)\) be an NC
rational stack pair and let \(\alpha\in\Q\cap[0,1)\).
\begin{enumerate}[label=\textup{(\roman*)}]
\item On a sufficiently small neighbourhood \(\mathscr V\) of
  \(|\mathscr P|\), the morphism
  \[
   \mathcal O_{\mathscr V}(-\mathscr P)
   \xrightarrow{\,dw\,}
   \Omega_{\mathscr V}^1(\log\mathscr D)
  \]
  is a locally split injection.  If \(M\) is its cokernel, then, for
  every \(a\geq1\), there are locally split exact sequences
  \begin{align*}
   0&\longrightarrow
   dw\wedge\Omega_{\mathscr V}^{a-1}(\log\mathscr D)(-\mathscr P)
   \longrightarrow\Omega_{\mathscr V}^a(\log\mathscr D)
   \longrightarrow\bigwedge^aM\longrightarrow0,\\
   0&\longrightarrow
   dw\wedge\Omega_{\mathscr V}^{a-1}(*\mathscr D)
   \longrightarrow\Omega_{\mathscr V}^a(*\mathscr D)
   \longrightarrow(\bigwedge^aM)(*\mathscr D)\longrightarrow0.
  \end{align*}
  Inside \(\Omega_{\mathscr V}^a(*\mathscr D)\),
  \begin{equation}
   \bigl(dw\wedge\Omega_{\mathscr V}^{a-1}(*\mathscr D)\bigr)
   \cap\Omega_{\mathscr V}^a(\log\mathscr D)
   =dw\wedge
    \Omega_{\mathscr V}^{a-1}(\log\mathscr D)(-\mathscr P).
   \label{eq:primitive-logarithmic-intersection}
  \end{equation}
  Moreover, \(\Omega_{\mathscr X,w}^a(\alpha)\) is locally free of
  rank \(\binom{\dim\mathscr X}{a}\), and its formation commutes with
  \'etale base change.  With
  \(\Omega_{\mathscr X}^{-1}(\log\mathscr D)=0\), one has
  \begin{equation}
   \Omega_{\mathscr X,w}^a(0)
   =\Omega_{\mathscr X}^a(\log\mathscr D)(-\mathscr P)
    +dw\wedge
     \Omega_{\mathscr X}^{a-1}(\log\mathscr D)(-\mathscr P)
   \label{eq:zero-level-kontsevich-lattice}
  \end{equation}
  inside \(\Omega_{\mathscr X}^a(*\mathscr D)\), and
  \begin{equation}
   \Omega_{\mathscr X,w}^a(\alpha)
   \simeq\Omega_{\mathscr X,w}^a(0)
   \otimes\mathcal O_{\mathscr X}(\lfloor\alpha\mathscr P\rfloor).
   \label{eq:kontsevich-lattice-fractional-twist}
  \end{equation}

\item For every \(p\in\Z\), inclusion in the meromorphic complex
  induces a natural quasi-isomorphism
  \begin{equation*}
   \kappa_{\mathscr X}^{p,\alpha}\colon
   \mathcal A_{\mathscr X,w}^{p,\alpha}
   \xrightarrow{\ \sim\ }
   \Firr^{p-\alpha}\mathcal K_{\mathscr X,w}^\bullet.
  \end{equation*}

\item The untruncated inclusion is a quasi-isomorphism
  \begin{equation*}
   \kappa_{\mathscr X}^{\alpha}\colon
   \bigl(\Omega_{\mathscr X,w}^\bullet(\alpha),\nabla_w\bigr)
   \xrightarrow{\ \sim\ }
   \mathcal K_{\mathscr X,w}^\bullet.
  \end{equation*}

\item The inclusion
  \begin{equation*}
   (\mathcal C_{\mathscr X,w}^\bullet,\Firr)
   \longrightarrow
   (\mathcal K_{\mathscr X,w}^\bullet,\Firr)
  \end{equation*}
  is a filtered quasi-isomorphism.
\end{enumerate}
All these maps commute with \'etale base change; those in
\textup{(ii)} and \textup{(iii)} also commute with restriction to
the open model.
\end{proposition}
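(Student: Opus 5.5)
Everything is étale-local and compatible with étale pullback, so I will first reduce to an étale scheme chart with coordinates. Away from \(|\mathscr P|\) the statements are trivial: \(\mathscr P=0\), all rounded twists vanish, \(\Omega_{\mathscr X,w}^a(\alpha)=\Omega_{\mathscr X}^a(\log\mathscr D)\), and (ii)--(iv) become equalities of complexes. Near \(|\mathscr P|\) I will use the Chen--Yu normal form \eqref{eq:chen-yu-nondegenerate-local-form}: \(w=u/g\) or \(w=uz/g\), with \(g=\prod x_i^{e_i}\). The computation in the proof of \cref{lem:indeterminacy-logarithmic-conormal-line}(iii) gives
\[
 g\,dw=u\,dz+z\,du-uz\textstyle\sum e_i\,dx_i/x_i
\]
in the second case, and \(g\,dw=-\sum e_i\,dx_i/x_i\) up to a unit in the first. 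In both cases \(g\,dw\) is part of a logarithmic frame: either its \(dz\)-coefficient is a unit, or some \(dx_i/x_i\)-coefficient \(e_i\) is a nonzero constant. This proves the local splitting in (i).

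Choose a frame \(\theta=g\,dw,\ \eta_2,\dots,\eta_n\) of \(\Omega^1(\log\mathscr D)\). Both exact sequences and the intersection formula \eqref{eq:primitive-logarithmic-intersection} then follow by expanding in the wedge basis. A form \(\theta\wedge\beta\) with \(\beta\) meromorphic is logarithmic exactly when the coefficients of \(\beta\) in the basis \(\eta_I\) (with \(1\notin I\)) are regular, that is, when \(\beta\in\Omega^{a-1}(\log)\bmod\theta\). Rewriting \(\theta\wedge\beta\) as \(dw\wedge g\beta\) gives the \((-\mathscr P)\) twist.

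For the lattice, write \(\omega=\theta\wedge\beta+\gamma\), with \(\beta,\gamma\) in the span of the \(\eta_I\) not involving \(\theta\) and \(\omega\) in the \(\lfloor\alpha\mathscr P\rfloor\)-twisted log forms. Then \(\nabla_w\omega\) is log with twist \(\lfloor\alpha\mathscr P\rfloor\) exactly when \(dw\wedge\gamma=g^{-1}\theta\wedge\gamma\) is, since \(d\) preserves the twist. By the splitting, this happens iff \(\gamma\) vanishes to order \(\mathscr P\), i.e.\ \(\gamma\in\Omega^a(\log)(\lfloor\alpha\mathscr P\rfloor-\mathscr P)\). This gives \eqref{eq:zero-level-kontsevich-lattice} and \eqref{eq:kontsevich-lattice-fractional-twist}, and a free basis \(\{g\,\theta\wedge\eta_J\}\cup\{g\,\eta_I\}\) (suitably twisted) of rank \(\binom na\). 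I will use this adapted frame throughout.

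For (ii) and (iii), I will compare the stupid truncations degree by degree via a filtration by pole order along \(\mathscr P\). The cokernel of \(\kappa^{p,\alpha}\) in each degree is spanned by forms with poles of order between \(\lfloor\alpha\mathscr P\rfloor-\mathscr P\) and \(\lfloor(a-p+\alpha)\mathscr P\rfloor\). On the graded pieces of that pole filtration the differential \(\nabla_w\) is dominated by \(dw\wedge\), which raises pole order by exactly \(\mathscr P\), while \(d\) and \(dg\)-terms preserve or lower order. The graded pieces are therefore Koszul complexes of wedging with the split vector \(\theta\), and these are acyclic. The intersection formula \eqref{eq:primitive-logarithmic-intersection} is exactly what ensures the boundary terms at the top and bottom degrees match the Kontsevich lattice rather than the rounded Yu term. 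For (iii) I will take \(p\ll0\), where \(\Firr^{p-\alpha}\) exhausts \(\mathcal K\) in the colimit, and pass to the filtered colimit over \(p\), which is exact. For (iv) I will argue the same way: for \(\lambda\le0\) the quotient \(\Firr^\lambda\mathcal K/\mathcal C\) is filtered by pole order with Koszul graded pieces, and for \(\lambda>0\) the two sides coincide.

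The main obstacle is organizing the pole-order filtration along a nonreduced \(\mathscr P=\sum e_i\mathscr P_i\) with fractional rounding, so that each graded piece really is a twisted Koszul complex for \(\theta\). I would filter by a single rational parameter \(c\) via \(\lfloor c\mathscr P\rfloor\), using that \(\lfloor c\mathscr P\rfloor+\mathscr P=\lfloor(c+1)\mathscr P\rfloor\), and check that the jumps pair up through \(dw\wedge\). The remaining care point is the boundary of the range in the first case of the normal form, where \(\theta\) involves \(dx_i/x_i\). Compatibility with base change and with restriction to the open model is clear, because every construction is intrinsic and étale-local.
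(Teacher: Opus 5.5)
Your proof is correct. For (i) and (iii) it is the paper's argument: reduce to the Chen--Yu normal form on an \'etale chart, complete $\theta=g\,dw$ to a logarithmic frame, compute the kernel explicitly, and for (iii) take $p\le0$ and pass to the exact filtered colimit.

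The routes differ in (ii) and (iv). The paper does not reprove (ii). It quotes the third quasi-isomorphism of Chen--Yu's Proposition~1(ii), noting that that proof is local and uses only the normal forms. It then gets (iv) formally from (ii), (iii) and two-out-of-three. You prove (ii) and (iv) directly by a pole-order filtration with Koszul graded pieces.

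The obstacle you flag does resolve, once the filtration is shifted with degree. In degree $a$ put $G^c=\Omega^a_{\mathscr X}(\log\mathscr D)(\lfloor(c+a)\mathscr P\rfloor)$.
\begin{itemize}
\item $dw\wedge$ preserves $c$.
\item $d$, including the terms coming from the twist, maps $G^c$ into $G^{c-1}$. So the first normal form needs no special care.
\item The jumps are the $c$ with $ce_i\in\Z$ for some $i$, independently of $a$.
\end{itemize}
Now consider $\Firr^{p-\alpha}\mathcal K^\bullet_{\mathscr X,w}/\mathcal A^{p,\alpha}_{\mathscr X,w}$.
\begin{itemize}
\item The $\bigwedge^aM$-component in degree $a$ occupies the levels $s=c+a\in(\alpha-1,\,a-p+\alpha]$.
\item The $\theta\wedge\bigwedge^aM$-component in degree $a+1$ occupies $s\in(\alpha,\,a+1-p+\alpha]$.
\item The $\theta$-component in degree $p$ vanishes.
\end{itemize}
Since $dw\wedge$ raises $s$ by one, each graded piece is a sum of two-term isomorphisms. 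The filtration is finite on this bounded complex, so the quotient is acyclic.

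For (iv) with $\lambda\le0$, the quotient $\Firr^\lambda\mathcal K^\bullet_{\mathscr X,w}/\mathcal C^\bullet_{\mathscr X,w}$ has levels $c\in(0,-\lambda]$ in every degree $a\ge0$. Its graded pieces are therefore full Koszul complexes of a frame vector, hence exact.

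The paper's route is shorter and relies on the smooth-variety literature. Yours is self-contained and shows exactly why the Kontsevich lattice is needed: its two components have staggered bottom levels $\lfloor\alpha\mathscr P\rfloor$ and $\lfloor\alpha\mathscr P\rfloor-\mathscr P$, and this is precisely what makes the truncated Koszul pieces acyclic.

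One slip in your basis. The free basis of $\Omega^a_{\mathscr X,w}(0)$ is $\{\theta\wedge\eta_J\}\cup\{g\,\eta_I\}$, then twisted by $\lfloor\alpha\mathscr P\rfloor$. It is not $\{g\,\theta\wedge\eta_J\}\cup\{g\,\eta_I\}$, which spans only $\Omega^a_{\mathscr X}(\log\mathscr D)(-\mathscr P)$. Your own derivation, which leaves $\beta$ unconstrained, already gives the correct basis.
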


\begin{proof}
Away from \(|\mathscr P|\), the function \(w\) is regular and
\[
 \Omega_{\mathscr X,w}^a(\alpha)
 =\Omega_{\mathscr X}^a(\log\mathscr D),
\]
so the lattice assertions in \textup{(i)} are immediate there.  Near
the polar locus, work on an \'etale chart carrying the normal form
\eqref{eq:chen-yu-nondegenerate-local-form}, and write
\[
 x^e:=\prod_i x_i^{e_i},
 \qquad
 x^{\lfloor\alpha e\rfloor}
 :=\prod_i x_i^{\lfloor\alpha e_i\rfloor},
 \qquad
 \vartheta:=x^e dw.
\]
In the two possible normal forms,
\[
 \vartheta=-\sum_i e_i\frac{dx_i}{x_i},
 \qquad\text{or}\qquad
 \vartheta=dz-z\sum_i e_i\frac{dx_i}{x_i}.
\]
Thus \(\vartheta\) is primitive in
\(\Omega^1(\log\mathscr D)\) and may be completed to a logarithmic
basis.  Choosing the complementary bundle \(M\), one obtains
\begin{equation*}
 \Omega^a(\log\mathscr D)
 =\vartheta\wedge\bigwedge^{a-1}M\oplus\bigwedge^aM.
\end{equation*}
This proves the split injection and the first exterior-power sequence
in \textup{(i)}.  After localizing along \(\mathscr D\), the identity
\(dw=x^{-e}\vartheta\) gives the second sequence.  The direct-sum
decompositions before and after localization also give
\eqref{eq:primitive-logarithmic-intersection}.

The kernel in \eqref{eq:stack-kontsevich-lattice} is freely generated
by
\begin{equation*}
 x^{-\lfloor\alpha e\rfloor}\vartheta\wedge\xi,
 \qquad
 x^{e-\lfloor\alpha e\rfloor}\eta,
 \qquad
 \xi\in\bigwedge^{a-1}M,\quad
 \eta\in\bigwedge^aM.
\end{equation*}
This proves local freeness, \eqref{eq:zero-level-kontsevich-lattice},
and \eqref{eq:kontsevich-lattice-fractional-twist}.  Flat pullback
commutes with the defining kernel, so the construction is compatible
with \'etale base change.

For \textup{(ii)}, the assertion is \'etale local.  On a scheme chart
with the normal form
\eqref{eq:chen-yu-nondegenerate-local-form}, it is the third
quasi-isomorphism of \cite[Proposition~1(ii)]{ChenYu}.  The proof
there begins by noting that the statement is local for coherent
sheaves and then uses only these normal forms.  Thus neither
properness nor projectivity is used.  Since the cone is acyclic on a
surjective \'etale atlas, it is acyclic on \(\mathscr X\).

For \textup{(iii)}, take any \(p\leq0\).  Then
\[
 \mathcal A_{\mathscr X,w}^{p,\alpha}
 =\bigl(\Omega_{\mathscr X,w}^\bullet(\alpha),\nabla_w\bigr).
\]
By \textup{(ii)}, its inclusion into
\(\Firr^{p-\alpha}\mathcal K_{\mathscr X,w}^\bullet\) is a
quasi-isomorphism.  These maps are compatible as \(p\) decreases,
and
\[
 \bigcup_{p\leq0}
 \Firr^{p-\alpha}\mathcal K_{\mathscr X,w}^\bullet
 =\mathcal K_{\mathscr X,w}^\bullet.
\]
Filtered colimits are exact.  Taking the union proves
\textup{(iii)}.

For \textup{(iv)}, the two filtered complexes agree at every positive
level.  At a nonpositive level \(\lambda\in\Q\), put
\(p=\lceil\lambda\rceil\) and \(\alpha=p-\lambda\).  Then
\(p\leq0\), so \textup{(ii)} and \textup{(iii)} show that the natural
inclusion
\[
 \Firr^\lambda\mathcal K_{\mathscr X,w}^\bullet
 \longrightarrow\mathcal K_{\mathscr X,w}^\bullet
\]
is a quasi-isomorphism.  The same argument at \(\lambda=0\) shows
that
\(\mathcal C_{\mathscr X,w}^\bullet
 =\Firr^0\mathcal K_{\mathscr X,w}^\bullet\)
maps quasi-isomorphically to the total complex.  The inclusion from
the coherent level to the level \(\lambda\) is therefore a
quasi-isomorphism by two-out-of-three.

All lattices and arrows used above are intrinsic and commute with
\'etale base change.  This completes the proof.
\end{proof}

\subsection{Kontsevich pullback and comparison roofs}
\label{subsec:kontsevich-functorial-comparison}

\begin{definition}[Morphisms of NC rational stack pairs]
\label{def:nc-rational-stack-pair-morphism}
Let
\((\mathscr X',\mathscr D',\mathscr Z',\mathscr P',w')\) and
\((\mathscr X,\mathscr D,\mathscr Z,\mathscr P,w)\) be NC rational
stack pairs.  A \emph{morphism of NC rational stack pairs} is a
commutative diagram
\begin{equation*}
 \begin{tikzcd}
  \mathscr U' \arrow[r,hook] \arrow[d,"g"']
  &\mathscr X' \arrow[d,"\pi"]\\
  \mathscr U \arrow[r,hook]
  &\mathscr X
 \end{tikzcd}
\end{equation*}
such that \(\pi^{-1}|\mathscr D|\subseteq|\mathscr D'|\) and
\(w'=\pi^*w\) as rational functions.  It is \emph{dominant} if
\(\pi\) is dominant on every connected component.  It is an
\emph{NC rational stack-pair modification} if \(\mathscr U'=\mathscr U\),
\(g\) is the identity, and \(\pi\) is proper.
\end{definition}

\begin{proposition}\label{prop:stack-kontsevich-functoriality}
Every dominant morphism of NC rational stack pairs induces injective
\(\mathcal O_{\mathscr X'}\)-linear maps
\begin{equation}
 \theta_{\pi,\alpha}^a\colon
 \pi^*\Omega_{\mathscr X,w}^a(\alpha)
 \lhook\joinrel\longrightarrow
 \Omega_{\mathscr X',w'}^a(\alpha)
 \label{eq:stack-kontsevich-functorial-pullback}
\end{equation}
for all \(a\geq0\) and \(\alpha\in\Q\cap[0,1)\).  Pullback of sections
commutes with the twisted differentials and gives a morphism
\begin{equation}
 \pi^{-1}\Omega_{\mathscr X,w}^\bullet(\alpha)
 \longrightarrow
 \Omega_{\mathscr X',w'}^\bullet(\alpha)
 \label{eq:stack-kontsevich-complex-pullback}
\end{equation}
of complexes of \(\C\)-vector-space sheaves.  These maps are stable
under composition.
\end{proposition}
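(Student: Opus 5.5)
The plan is to build the pullback inside meromorphic forms and then check that it lands in the target lattice, using the description of the Kontsevich lattice as a kernel in \eqref{eq:stack-kontsevich-lattice}. Since \(\pi\) is dominant on every component and \(\pi^{-1}|\mathscr D|\subseteq|\mathscr D'|\), pullback of differential forms gives an injective map of sheaves on \(\mathscr X'\),
\[
 \pi^{-1}\Omega_{\mathscr X}^a(*\mathscr D)\longrightarrow
 \Omega_{\mathscr X'}^a(*\mathscr D').
\]
Injectivity holds because \(\pi\) is generically smooth on each component (characteristic zero, dominant), so a nonzero form has nonzero pullback at the generic point.

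\emph{Step 1: the lattice condition.} Since \(w'=\pi^*w\), pullback commutes with \(\nabla_w\). It therefore suffices to show two inclusions:
\[
 \pi^*\Omega_{\mathscr X}^a(\log\mathscr D)(\lfloor\alpha\mathscr P\rfloor)
 \subseteq\Omega_{\mathscr X'}^a(\log\mathscr D')(\lfloor\alpha\mathscr P'\rfloor)
\]
for forms, and the analogous inclusion in degree \(a+1\). With these, a section killed by \(\nabla_w\) modulo the source lattice pulls back to one killed modulo the target lattice.

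For the first factor, logarithmic forms pull back to logarithmic forms because \(\pi^{-1}\mathscr D\subseteq\mathscr D'\). This can be checked on \'etale charts: \(\pi^*(dx/x)=d(\pi^*x)/\pi^*x\), and \(\pi^*x\) is a unit times a monomial in the local equations of \(\mathscr D'\).

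For the twist, compare multiplicities along a prime component \(E'\) of \(\mathscr D'\). Since \(w'=\pi^*w\), we have \(\mathscr P'=\) the polar part of \(\pi^*\mathscr Z-\pi^*\mathscr P\). The integer point is that
\[
 \operatorname{ord}_{E'}\pi^*\lfloor\alpha\mathscr P\rfloor
 =\sum_i m_i\lfloor\alpha e_i\rfloor ,
\]
with \(m_i=\operatorname{ord}_{E'}\pi^*\mathscr P_i\), and this should be compared with \(\lfloor\alpha e'\rfloor\), where \(e'=\operatorname{ord}_{E'}\mathscr P'\).

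\emph{Step 2: the main obstacle.} If no zero cancels a pole, then \(e'=\sum m_ie_i\), and superadditivity of the floor gives the needed inequality. Under cancellation by \(\mathscr Z\), however, \(e'\) may be smaller, and the naive inclusion of the rounded lattices fails. This is exactly why Kontsevich lattices are used.

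So I will not compare the ambient lattices. Instead I will use the explicit local generators from the proof of \cref{prop:local-kontsevich-yu-package}(i): the forms \(x^{-\lfloor\alpha e\rfloor}\vartheta\wedge\xi\) and \(x^{e-\lfloor\alpha e\rfloor}\eta\).

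\begin{itemize}
\item The second kind vanish to order at least \(\sum m_i(e_i-\lfloor\alpha e_i\rfloor)\ge\lceil(1-\alpha)\sum m_ie_i\rceil\). Since \(e'\le\sum m_ie_i\) plus zero contributions, they land in \(\Omega^a_{\mathscr X'}(\log\mathscr D')(\lfloor\alpha\mathscr P'\rfloor)\). It remains to check that their \(\nabla_{w'}\) also lands there. Because \(\nabla_w\) of these generators lies in the source lattice, I will verify it directly using \(\pi^*\mathscr Z\ge0\).
\item For the first kind, I will write them as \(g^{-1}\,dw\wedge\xi\)-type expressions with \(g=x^{\lfloor\alpha e\rfloor-e}\). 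I will then use \eqref{eq:zero-level-kontsevich-lattice} on \(\mathscr X'\), which describes the \(dw'\)-direction of the target lattice.
\end{itemize}

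Concretely, the cleanest route is to prove the statement first for \(\alpha=0\) via \eqref{eq:zero-level-kontsevich-lattice}. On the source, sections are \(\omega+dw\wedge\eta\) with \(\omega,\eta\in\Omega^\bullet(\log\mathscr D)(-\mathscr P)\). Their pullbacks are \(\pi^*\omega+dw'\wedge\pi^*\eta\) with \(\pi^*\omega,\pi^*\eta\in\Omega^\bullet(\log\mathscr D')(-\pi^*\mathscr P)\), and \(\pi^*\mathscr P\ge\mathscr P'\) since \(\pi^*\mathscr Z\) is effective. So they lie in \(\Omega^a_{\mathscr X',w'}(0)\). The general \(\alpha\) then follows from the kernel characterization: by \eqref{eq:kontsevich-lattice-fractional-twist} the target lattice is the \(\alpha=0\) lattice twisted by \(\lfloor\alpha\mathscr P'\rfloor\), and the kernel condition is tested against \(\lfloor\alpha\mathscr P'\rfloor\) versus \(\pi^*\lfloor\alpha\mathscr P\rfloor\). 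I expect this comparison of fractional twists under cancellation to be the genuinely delicate verification. I will attack it valuation by valuation along each \(E'\), reducing to a one-variable inequality on the \(dw'\)-coefficient and the complementary coefficient separately.

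\emph{Step 3: conclusion.} Injectivity of \(\theta^a_{\pi,\alpha}\) on \(\pi^*\) holds because both sides are locally free of the same rank by \cref{prop:local-kontsevich-yu-package}(i), and the map is generically an isomorphism on dense opens over \(\mathscr U'\). Compatibility with differentials and with composition is inherited from pullback of meromorphic forms. All checks are \'etale local and descend, by \cref{prop:local-kontsevich-yu-package}.
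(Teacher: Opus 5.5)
Your $\alpha=0$ computation is correct and is essentially the paper's first step: $\pi^*\omega+dw'\wedge\pi^*\eta$ lands in $\Omega^a_{\mathscr X',w'}(0)$ because $\mathscr P'\le\pi^*\mathscr P$.

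The gap is the case $\alpha>0$. You announce that the comparison of fractional twists is ``genuinely delicate'' and will be handled valuation by valuation, but you give no argument. Moreover, the comparison as you frame it is exactly the one that fails under cancellation. You propose to compare $\lfloor\alpha\mathscr P'\rfloor$ against $\pi^*\lfloor\alpha\mathscr P\rfloor$, i.e.\ to twist the $\alpha=0$ inclusion, or to test the kernel condition only through the ambient lattice $\Omega^a_{\mathscr X}(\log\mathscr D)(\lfloor\alpha\mathscr P\rfloor)$. Take the Chen--Yu blowup of $(z=x_1=0)$ with $w=z/x_1^2$ and $\alpha=1/2$. The exceptional coefficient of $\pi^*\lfloor\alpha\mathscr P\rfloor$ is $1$, while that of $\lfloor\alpha\mathscr P'\rfloor$ is $\lfloor 1/2\rfloor=0$. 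Your framing discards the gain $\pi^*\mathscr P-\mathscr P'$ that your own $\alpha=0$ computation actually produces.

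The missing idea is to put the whole polar twist into a single ceiling. Set
\[
 \widetilde\Omega^a_{\mathscr X,w}:=\Omega^a_{\mathscr X}(\log\mathscr D)+dw\wedge\Omega^{a-1}_{\mathscr X}(\log\mathscr D).
\]
This pulls back into $\widetilde\Omega^a_{\mathscr X',w'}$ with no reference to $\mathscr P$ at all. Since the $e_i$ are integers, $\lfloor\alpha e_i\rfloor-e_i=-\lceil(1-\alpha)e_i\rceil$. Hence, with $\beta=1-\alpha$, one has $\Omega^a_{\mathscr X,w}(\alpha)=\widetilde\Omega^a_{\mathscr X,w}(-\lceil\beta\mathscr P\rceil)$. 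The needed comparison now runs in the favourable direction:
\[
 \pi^*\lceil\beta\mathscr P\rceil\ \ge\ \lceil\beta\pi^*\mathscr P\rceil\ \ge\ \lceil\beta\mathscr P'\rceil .
\]
The first inequality is subadditivity of ceilings, and the second uses $\mathscr P'\le\pi^*\mathscr P$. You in fact write the first inequality in your bullet on the $\eta$-generators. Applying it equally to the $\vartheta$-generators, rewritten as $x^{\lceil\beta e\rceil}dw\wedge\xi$, would have finished the proof.

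Separately, your Step~3 is wrong as stated. For a merely dominant $\pi$ one can have $\dim\mathscr X'>\dim\mathscr X$. Then the two lattices need not have the same rank, and the map is not generically an isomorphism. Instead, use generic smoothness in characteristic zero to get generic injectivity of $\pi^*\Omega^a_{\mathscr X,w}(\alpha)\to\Omega^a_{\mathscr X',w'}(\alpha)$. Local freeness of the source then gives injectivity everywhere, as you yourself indicated at the start.
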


\begin{proof}
Let
\[
 \widetilde\Omega_{\mathscr X,w}^a
 :=\Omega_{\mathscr X,w}^a(0)(\mathscr P).
\]
By \eqref{eq:zero-level-kontsevich-lattice},
\begin{equation*}
 \widetilde\Omega_{\mathscr X,w}^a
 =\Omega_{\mathscr X}^a(\log\mathscr D)
  +dw\wedge\Omega_{\mathscr X}^{a-1}(\log\mathscr D).
\end{equation*}
The boundary condition on \(\pi\) gives pullback of logarithmic
forms, and \(dw'=\pi^*(dw)\).  Hence rational pullback restricts to
\begin{equation}
 \pi^*\widetilde\Omega_{\mathscr X,w}^a
 \longrightarrow\widetilde\Omega_{\mathscr X',w'}^a.
 \label{eq:unrounded-kontsevich-pullback}
\end{equation}

Put \(\beta:=1-\alpha\).  Since the coefficients of \(\mathscr P\)
are integral, \eqref{eq:kontsevich-lattice-fractional-twist} becomes
\begin{equation}
 \Omega_{\mathscr X,w}^a(\alpha)
 =\widetilde\Omega_{\mathscr X,w}^a
   (-\lceil\beta\mathscr P\rceil).
 \label{eq:fractional-lattice-from-unrounded}
\end{equation}
The polar divisor of \(\pi^*w\) satisfies
\(\mathscr P'\leq\pi^*\mathscr P\).  If
\(\mathscr P=\sum_i e_i\mathscr P_i\), monotonicity and
subadditivity of coefficientwise ceilings give
\begin{equation}
 \pi^*\lceil\beta\mathscr P\rceil
 =\sum_i\lceil\beta e_i\rceil\pi^*\mathscr P_i
 \geq\lceil\beta\pi^*\mathscr P\rceil
 \geq\lceil\beta\mathscr P'\rceil.
 \label{eq:rounded-polar-pullback-inequality}
\end{equation}
Twisting \eqref{eq:unrounded-kontsevich-pullback} accordingly yields
the required factorization
\begin{align*}
 \pi^*\Omega_{\mathscr X,w}^a(\alpha)
 &\simeq
 \pi^*\widetilde\Omega_{\mathscr X,w}^a
 \otimes\mathcal O_{\mathscr X'}
       (-\pi^*\lceil\beta\mathscr P\rceil)\\
 &\longrightarrow
 \widetilde\Omega_{\mathscr X',w'}^a
       (-\pi^*\lceil\beta\mathscr P\rceil)\\
 &\lhook\joinrel\longrightarrow
 \widetilde\Omega_{\mathscr X',w'}^a
       (-\lceil\beta\mathscr P'\rceil)
 \simeq\Omega_{\mathscr X',w'}^a(\alpha).
\end{align*}
Thus ordinary pullback of rational forms factors through
\eqref{eq:stack-kontsevich-functorial-pullback}.  It is generically
injective because \(\pi\) is dominant and the characteristic is zero;
its locally free source then makes it injective everywhere.  Finally,
ordinary pullback commutes with \(d\) and with \(dw\wedge\), and
composes strictly.  This proves the remaining assertions.
\end{proof}

\begin{definition}[Kontsevich-induced Yu comparison]
\label{def:kontsevich-induced-yu-comparison}
Let \(\pi\) be a dominant morphism and let \(\lambda\in\Q\).  Put
\[
 p=\lceil\lambda\rceil,
 \qquad
 \alpha=p-\lambda\in\Q\cap[0,1).
\]
The
\emph{Kontsevich-induced Yu comparison} is the morphism represented
by the roof
\begin{equation}
 \begin{tikzcd}[column sep=large]
 &\mathcal A_{\mathscr X,w}^{p,\alpha}
  \arrow[dl,"\kappa_{\mathscr X}^{p,\alpha}"',"\sim"]
  \arrow[r]
 &R\pi_*\mathcal A_{\mathscr X',w'}^{p,\alpha}
  \arrow[dr,"R\pi_*\kappa_{\mathscr X'}^{p,\alpha}","\sim"']&\\
 \Firr^\lambda\mathcal K_{\mathscr X,w}^\bullet
 &&&R\pi_*\Firr^\lambda\mathcal K_{\mathscr X',w'}^\bullet.
 \end{tikzcd}
 \label{eq:kontsevich-induced-yu-comparison-roof}
\end{equation}
It is denoted by
\begin{equation}
 \Phi_{\pi,\lambda}\colon
 \Firr^\lambda\mathcal K_{\mathscr X,w}^\bullet
 \longrightarrow
 R\pi_*\Firr^\lambda\mathcal K_{\mathscr X',w'}^\bullet.
 \label{eq:kontsevich-induced-yu-comparison}
\end{equation}
The middle arrow is adjoint to the truncation of
\eqref{eq:stack-kontsevich-complex-pullback}.  The roof is needed
because direct pullback need not preserve the rounded Yu lattices.
\end{definition}

\begin{remark}\label{rem:rounded-yu-pullback-obstruction}
In degree \(a\), direct pullback of the level \(\lambda\) Yu lattice
would require
\[
 \pi^*\lfloor(a-\lambda)\mathscr P\rfloor
 \leq\lfloor(a-\lambda)\mathscr P'\rfloor,
\]
which does not follow from \(\mathscr P'\leq\pi^*\mathscr P\) and
can fail after zero--pole cancellation.  Formula
\eqref{eq:fractional-lattice-from-unrounded} replaces this by the
oppositely directed ceiling inequality
\eqref{eq:rounded-polar-pullback-inequality}.
\end{remark}

Let
\begin{equation*}
 \beta_{\pi,\alpha}\colon
 \bigl(\Omega_{\mathscr X,w}^\bullet(\alpha),\nabla_w\bigr)
 \longrightarrow
 R\pi_*\bigl(
  \Omega_{\mathscr X',w'}^\bullet(\alpha),\nabla_{w'}
 \bigr)
\end{equation*}
be adjoint to the untruncated Kontsevich pullback.  To define the
corresponding total map, write
\(j\colon\mathscr U\hookrightarrow\mathscr X\) and
\(j'\colon\mathscr U'\hookrightarrow\mathscr X'\).  Ordinary
pullback on the open stacks, followed by adjunction and the canonical
identification \(Rj_*Rg_*\simeq R\pi_*Rj'_*\), gives
\[
 \gamma_g\colon
 Rj_*(\Omega_{\mathscr U}^\bullet,\nabla_w)
 \longrightarrow
 R\pi_*Rj'_*(\Omega_{\mathscr U'}^\bullet,\nabla_{w'}).
\]
Using \cref{prop:stack-logarithmic-comparison}, define
\begin{equation*}
 \Psi_\pi
 :=(R\pi_*r_{\mathscr X'})^{-1}\circ\gamma_g\circ r_{\mathscr X}
 \colon
 \mathcal K_{\mathscr X,w}^\bullet
 \longrightarrow R\pi_*\mathcal K_{\mathscr X',w'}^\bullet.
\end{equation*}

\begin{proposition}\label{prop:kontsevich-comparison-functoriality}
\label{prop:yu-total-comparison-compatibility}
Let \(\pi\) be a dominant morphism of NC rational stack pairs and
let \(\lambda\in\Q\).
\begin{enumerate}[label=\textup{(\roman*)}]
\item For every composable dominant morphism
  \(\rho\colon\mathscr X''\to\mathscr X'\), one has
  \begin{align*}
   \Phi_{\pi\circ\rho,\lambda}
   &=R\pi_*(\Phi_{\rho,\lambda})\circ\Phi_{\pi,\lambda},
   \\
   \Psi_{\pi\circ\rho}
   &=R\pi_*(\Psi_\rho)\circ\Psi_\pi
  \end{align*}
  under the standard composition identifications for derived direct
  images.

\item For fixed \(\alpha\in\Q\cap[0,1)\) and integers \(p\leq q\), the
  comparisons commute with the integer-slice transition:
  \begin{equation*}
   \begin{tikzcd}[column sep=large]
    \Firr^{q-\alpha}\mathcal K_{\mathscr X,w}^\bullet
     \arrow[r]\arrow[d,"\Phi_{\pi,q-\alpha}"']
    &\Firr^{p-\alpha}\mathcal K_{\mathscr X,w}^\bullet
     \arrow[d,"\Phi_{\pi,p-\alpha}"]\\
    R\pi_*\Firr^{q-\alpha}\mathcal K_{\mathscr X',w'}^\bullet
     \arrow[r]
    &R\pi_*\Firr^{p-\alpha}\mathcal K_{\mathscr X',w'}^\bullet.
   \end{tikzcd}
  \end{equation*}

\item The square
  \begin{equation}
   \begin{tikzcd}[column sep=large]
    \Firr^\lambda\mathcal K_{\mathscr X,w}^\bullet
     \arrow[r,"\Phi_{\pi,\lambda}"]
     \arrow[d]
    &R\pi_*\Firr^\lambda\mathcal K_{\mathscr X',w'}^\bullet
     \arrow[d]\\
    \mathcal K_{\mathscr X,w}^\bullet
     \arrow[r,"\Psi_\pi"']
    &R\pi_*\mathcal K_{\mathscr X',w'}^\bullet
   \end{tikzcd}
   \label{eq:yu-total-compatibility-square}
  \end{equation}
  commutes in the derived category.

\item On the open stacks both \(\Phi_{\pi,\lambda}\) and
  \(\Psi_\pi\) are induced by ordinary pullback.  Consequently,
  \begin{equation*}
   g^*F_{\mathrm{Yu},\mathscr X}^{\lambda}
   H_{\mathrm{dR}}^k(\mathscr U,w)
   \subseteq
   F_{\mathrm{Yu},\mathscr X'}^{\lambda}
   H_{\mathrm{dR}}^k(\mathscr U',w').
  \end{equation*}
\end{enumerate}
\end{proposition}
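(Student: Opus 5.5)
The plan is to reduce everything to strict functoriality of the Kontsevich pullback (\cref{prop:stack-kontsevich-functoriality}) together with the quasi-isomorphisms $\kappa$ of \cref{prop:local-kontsevich-yu-package}. All four assertions concern morphisms in the derived category given by roofs whose wrong-way legs are these $\kappa$'s. Each statement therefore becomes an identity between honest maps of complexes after composing with, or cancelling, these quasi-isomorphisms.

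For (i), I would first record the identity
\[
\theta_{\pi\circ\rho}=\theta_\rho\circ\rho^*\theta_\pi
\]
at the level of sheaves. This holds because all three maps are restrictions of ordinary pullback of rational forms, which composes strictly. Adjunction then gives
\[
\mathcal A_{\mathscr X}\to R(\pi\rho)_*\mathcal A_{\mathscr X''}
\;=\;
R\pi_*\bigl(\mathcal A_{\mathscr X'}\to R\rho_*\mathcal A_{\mathscr X''}\bigr)\circ\bigl(\mathcal A_{\mathscr X}\to R\pi_*\mathcal A_{\mathscr X'}\bigr),
\]
using the standard identification $R(\pi\rho)_*\simeq R\pi_*R\rho_*$ and the compatibility of adjunction units with composition. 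Next, I would compose the roof for $\pi$ with $R\pi_*$ of the roof for $\rho$. The inner zigzag $R\pi_*\mathcal A_{\mathscr X'}\to R\pi_*\Firr\mathcal K_{\mathscr X'}\leftarrow R\pi_*\mathcal A_{\mathscr X'}$ consists of the same quasi-isomorphism $R\pi_*\kappa_{\mathscr X'}$ in both directions, so it cancels, and the composite roof is the roof for $\pi\rho$. For $\Psi$ the argument is the same with $r$ in place of $\kappa$: the middle terms $(R\pi_*r_{\mathscr X'})\circ(R\pi_*r_{\mathscr X'})^{-1}$ cancel. It then remains to check $\gamma_{g\circ h}=R\pi_*\gamma_h\circ\gamma_g$, which holds because ordinary pullback on the open stacks composes strictly.

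For (ii), I would fix $\alpha$. For $p\le q$, the stupid truncation $\mathcal A^{q,\alpha}\subset\mathcal A^{p,\alpha}$ is compatible with the Kontsevich pullback, which is a map of untruncated complexes. It is also compatible with the two $\kappa$'s, since all of these maps are inclusions into $\mathcal K$. The two roofs are therefore connected by a commutative diagram of actual complexes, and the square commutes.

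For (iii), I would use the composite $\mathcal A^{p,\alpha}\hookrightarrow\Omega^\bullet_{\mathscr X,w}(\alpha)\xrightarrow{\kappa^\alpha}\mathcal K$, which is the inclusion through $\Firr^\lambda\mathcal K$. The point to establish is that $\Psi_\pi\circ\kappa^\alpha_{\mathscr X}=R\pi_*\kappa^\alpha_{\mathscr X'}\circ\beta_{\pi,\alpha}$. This is the main obstacle, since $\Psi_\pi$ is defined through $Rj_*$ and not through pullback of forms. My first attempt would be to compose both sides with $R\pi_*r_{\mathscr X'}$ and then compare genuine maps. The left side becomes $\gamma_g\circ r_{\mathscr X}\circ\kappa^\alpha$, which is restriction to $\mathscr U$ followed by pullback and then $j'$-adjunction. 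The right side becomes $R\pi_*(r_{\mathscr X'}\kappa^\alpha_{\mathscr X'})\circ\beta$, which is pullback followed by restriction to $\mathscr U'$. These agree because $\pi$ restricts to $g$ over $\mathscr U$, and restriction commutes with pullback of forms, using naturality of adjunction units for the square $\pi j'=jg$. Since $R\pi_*r_{\mathscr X'}$ is an isomorphism, the square \eqref{eq:yu-total-compatibility-square} commutes.

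For (iv), I would restrict to the open stacks. There $\mathcal A$, $\Firr\mathcal K$ and $\mathcal K$ all equal the ordinary twisted de Rham complex truncated or untruncated, the $\kappa$'s are identities, and $\Phi$ and $\Psi$ are ordinary pullback. To obtain the inclusion of filtrations, I would take $\mathbb H^k$ of the square in (iii) and identify the bottom row, via \cref{prop:stack-logarithmic-comparison}, with $g^*$ on $H^k_{\mathrm{dR}}$. A class in the image of $\mathbb H^k(\Firr^\lambda\mathcal K_{\mathscr X})$ then maps under $g^*$ to a class in the image of $\mathbb H^k(\mathscr X,R\pi_*\Firr^\lambda\mathcal K_{\mathscr X'})=\mathbb H^k(\mathscr X',\Firr^\lambda\mathcal K_{\mathscr X'})$, and that image is $F^\lambda_{\mathrm{Yu},\mathscr X'}$.
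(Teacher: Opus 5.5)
Your proposal is correct and follows essentially the same route as the paper: strict composition of Kontsevich pullback and of open pullback, with the inner $R\pi_*\kappa_{\mathscr X'}$ (resp.\ $R\pi_*r_{\mathscr X'}$) legs cancelling, for (i); compatibility of stupid truncations with pullback and with the inclusions $\kappa$ for (ii); the identity $R\pi_*r_{\mathscr X'}\circ R\pi_*\kappa^\alpha_{\mathscr X'}\circ\beta_{\pi,\alpha}=\gamma_g\circ r_{\mathscr X}\circ\kappa^\alpha_{\mathscr X}$ for (iii); and taking images on hypercohomology for (iv). As in the paper, the conclusion of (iii) rests on checking the square after precomposition with the quasi-isomorphism $\kappa^{p,\alpha}_{\mathscr X}$, which your use of the composite $\mathcal A^{p,\alpha}_{\mathscr X,w}\hookrightarrow\Omega^\bullet_{\mathscr X,w}(\alpha)\to\mathcal K^\bullet_{\mathscr X,w}$ already does.
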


\begin{proof}
Rational pullback, truncation, and the inclusions
\(\kappa^{p,\alpha}\) compose naturally.  Adjunction and derived
direct image are compatible with composition, proving the first
identity in \textup{(i)}.  The second follows in the same way from
the functoriality of open pullback and the definition of \(\Psi\).
The inclusions
\(\sigma_{\geq q}\Omega_w^\bullet(\alpha)\to
  \sigma_{\geq p}\Omega_w^\bullet(\alpha)\)
commute strictly with Kontsevich pullback, which gives \textup{(ii)}.

For \textup{(iii)}, naturality of restriction and adjunction gives
\begin{equation*}
 R\pi_*r_{\mathscr X'}\circ
 R\pi_*\kappa_{\mathscr X'}^\alpha\circ\beta_{\pi,\alpha}
 =\gamma_g\circ r_{\mathscr X}\circ\kappa_{\mathscr X}^\alpha.
\end{equation*}
Thus the total morphism induced by untruncated Kontsevich pullback is
\(\Psi_\pi\), independently of \(\alpha\).  Put
\(p=\lceil\lambda\rceil\) and \(\alpha=p-\lambda\).  The truncation
inclusion
\[
 \mathcal A_{\mathscr X,w}^{p,\alpha}
 \longrightarrow\Omega_{\mathscr X,w}^\bullet(\alpha)
\]
commutes with pullback, and its composites with
\(\kappa_{\mathscr X}^{p,\alpha}\) and
\(\kappa_{\mathscr X}^{\alpha}\) are the natural inclusions into
\(\mathcal K_{\mathscr X,w}^\bullet\).  Hence the two paths in
\eqref{eq:yu-total-compatibility-square} agree after precomposition
with the quasi-isomorphism \(\kappa_{\mathscr X}^{p,\alpha}\), so
the square commutes.  Restricting this construction to the opens
gives ordinary pullback, and taking images after derived global
sections proves \textup{(iv)}.
\end{proof}

\begin{corollary}\label{cor:levelwise-images-from-total-comparison}
Let \(\lambda\in\Q\).  Suppose that \(\pi\) is a modification and that
\(\Phi_{\pi,\lambda}\) is a quasi-isomorphism.  Under the common-open
identification of twisted de Rham cohomology,
\[
 F_{\mathrm{Yu},\mathscr X}^{\lambda}
 H_{\mathrm{dR}}^k(\mathscr U,w)
 =F_{\mathrm{Yu},\mathscr X'}^{\lambda}
 H_{\mathrm{dR}}^k(\mathscr U,w)
\]
for every \(k\).
\end{corollary}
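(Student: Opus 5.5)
The plan is to combine the compatibility square of \cref{prop:kontsevich-comparison-functoriality}(iii) with the fact that, for a modification, \(\Psi_\pi\) induces the identity on \(H_{\mathrm{dR}}^k(\mathscr U,w)\).

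First, since \(\pi\) is a modification, \(\mathscr U'=\mathscr U\) and \(g=\id\). By \cref{prop:kontsevich-comparison-functoriality}(iv), \(\Psi_\pi\) is induced by ordinary pullback on the common open. Hence, under the identifications of \cref{prop:stack-logarithmic-comparison}, the composite
\[
 R\Gamma(\mathscr X,\mathcal K_{\mathscr X,w}^\bullet)
 \xrightarrow{R\Gamma(\Psi_\pi)}
 R\Gamma(\mathscr X,R\pi_*\mathcal K_{\mathscr X',w'}^\bullet)
 =R\Gamma(\mathscr X',\mathcal K_{\mathscr X',w'}^\bullet)
\]
is the identity of \(R\Gamma(\mathscr U,\Omega_{\mathscr U}^\bullet,\nabla_w)\). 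This is immediate from the definition \(\Psi_\pi=(R\pi_*r_{\mathscr X'})^{-1}\circ\gamma_g\circ r_{\mathscr X}\), because \(\gamma_{\id}\) is the canonical identification \(Rj_*\simeq R\pi_*Rj'_*\).

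Next, apply \(R\Gamma(\mathscr X,-)\) and \(H^k\) to the square \eqref{eq:yu-total-compatibility-square}. Using \(R\Gamma(\mathscr X,R\pi_*-)=R\Gamma(\mathscr X',-)\), we obtain a commutative square. Its top arrow is
\[
 \mathbb H^k(\mathscr X,\Firr^\lambda\mathcal K_{\mathscr X,w}^\bullet)
 \longrightarrow
 \mathbb H^k(\mathscr X',\Firr^\lambda\mathcal K_{\mathscr X',w'}^\bullet),
\]
which is an isomorphism because \(\Phi_{\pi,\lambda}\) is a quasi-isomorphism. Its bottom arrow is the identity of \(H_{\mathrm{dR}}^k(\mathscr U,w)\), and its vertical arrows are exactly the maps whose images define \(F_{\mathrm{Yu},\mathscr X}^\lambda\) and \(F_{\mathrm{Yu},\mathscr X'}^\lambda\).

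Commutativity gives that the image of the left vertical map equals the image of the right vertical map precomposed with the top arrow. Since the top arrow is surjective, this is the full image of the right vertical map. Hence the two filtrations coincide, and both inclusions follow at once. The inclusion \(\subseteq\) alone is already \cref{prop:kontsevich-comparison-functoriality}(iv); the quasi-isomorphism hypothesis supplies the reverse inclusion.

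The only point requiring care is the first step: checking that the identification of both targets with \(H_{\mathrm{dR}}^k(\mathscr U,w)\) via \(r_{\mathscr X}\) and \(r_{\mathscr X'}\) is compatible with \(\Psi_\pi\), so that the bottom arrow really is the identity and not some other automorphism. I expect this to be a routine naturality check of the adjunction \(Rj_*\simeq R\pi_*Rj'_*\) when \(g=\id\).
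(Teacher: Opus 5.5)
Your proposal is correct and follows the paper's own proof: apply derived global sections to the compatibility square \eqref{eq:yu-total-compatibility-square}, note that the lower map becomes the identity under the common-open identification while the upper map is an isomorphism, and conclude that the two image subspaces coincide. Your extra remark that $\gamma_{\id}$ is the canonical identification $Rj_*\simeq R\pi_*Rj'_*$ is exactly the routine check the paper leaves implicit.
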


\begin{proof}
Apply derived global sections to
\eqref{eq:yu-total-compatibility-square}.  The lower map becomes the
identity under the common-open identification, and the upper map is
an isomorphism.  Their image subspaces are therefore equal.
\end{proof}

\begin{remark}\label{rem:fractional-transition-obstruction}
The comparisons are compatible with all integer transitions for a
fixed \(\alpha\), because they arise from truncations of one
Kontsevich complex.  No compatibility between distinct fractional
parameters is asserted.  None is needed for the associated graded
pieces used below.  Those pieces are formed only after all level
images have been identified inside the same total cohomology group.
\end{remark}

\subsection{Acyclic exceptional defects}
\label{subsec:acyclic-kontsevich-defects}

The preceding constructions separate the formal comparison from its
geometric input.  For the modifications considered later, that input
is the vanishing of the following degreewise cokernels.

\begin{proposition}\label{prop:acyclic-defect-descent-criterion}
Let
\[
 \pi\colon
 (\mathscr X',\mathscr D',\mathscr Z',\mathscr P',w')
 \longrightarrow
 (\mathscr X,\mathscr D,\mathscr Z,\mathscr P,w)
\]
be a proper dominant morphism of NC rational stack pairs such that
\begin{equation}
 \mathcal O_{\mathscr X}\xrightarrow{\ \sim\ }
 R\pi_*\mathcal O_{\mathscr X'}.
 \label{eq:kontsevich-structure-sheaf-descent}
\end{equation}
For \(a\geq0\) and \(\alpha\in\Q\cap[0,1)\), put
\[
 Q_\alpha^a:=\operatorname{coker}(\theta_{\pi,\alpha}^a).
\]
If \(R\pi_*Q_\alpha^a=0\) for every \(a\), then
\begin{align}
 \Omega_{\mathscr X,w}^a(\alpha)
 &\xrightarrow{\ \sim\ }
 R\pi_*\Omega_{\mathscr X',w'}^a(\alpha),
 \label{eq:abstract-kontsevich-termwise-descent}\\
 \mathcal A_{\mathscr X,w}^{p,\alpha}
 &\xrightarrow{\ \sim\ }
 R\pi_*\mathcal A_{\mathscr X',w'}^{p,\alpha}
 \label{eq:abstract-kontsevich-truncated-descent}
\end{align}
for every \(p\), and
\(\Phi_{\pi,p-\alpha}\) is a quasi-isomorphism.

If the vanishing holds for every \(\alpha\in\Q\cap[0,1)\), then the
map \(\Phi_{\pi,\lambda}\) is a quasi-isomorphism for every
\(\lambda\in\Q\).  For each fixed \(\alpha\), the comparisons also
identify the integer-filtered diagrams
\(F_\alpha^p=\Firr^{p-\alpha}\), their successive quotients, and the
corresponding filtrations on the coherent Yu models.  Their filtered
spectral sequences agree from the \(E_1\)-page onward.  The levelwise
comparisons are compatible with the maps to the total twisted de
Rham complex; for a modification, the resulting image subspaces in
\(H_{\mathrm{dR}}^k(\mathscr U,w)\) are equal.
\end{proposition}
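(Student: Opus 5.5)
The plan is to deduce termwise descent from the short exact sequences defining the defects, and then pass to the complexes. For each \(a\geq0\), \cref{prop:stack-kontsevich-functoriality} provides an injection \(\theta_{\pi,\alpha}^a\), so there is a short exact sequence of quasi-coherent sheaves
\[
 0\longrightarrow\pi^*\Omega_{\mathscr X,w}^a(\alpha)
 \longrightarrow\Omega_{\mathscr X',w'}^a(\alpha)
 \longrightarrow Q_\alpha^a\longrightarrow0 .
\]
Apply \(R\pi_*\) and use \(R\pi_*Q_\alpha^a=0\). This gives \(R\pi_*\pi^*\Omega_{\mathscr X,w}^a(\alpha)\simeq R\pi_*\Omega_{\mathscr X',w'}^a(\alpha)\). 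The lattice \(\Omega_{\mathscr X,w}^a(\alpha)\) is locally free by \cref{prop:local-kontsevich-yu-package}(i), so the projection formula together with \eqref{eq:kontsevich-structure-sheaf-descent} gives \(\Omega_{\mathscr X,w}^a(\alpha)\simeq R\pi_*\pi^*\Omega_{\mathscr X,w}^a(\alpha)\). Composing the two isomorphisms yields \eqref{eq:abstract-kontsevich-termwise-descent}. I still need to check that this composite is the adjunction unit followed by \(\theta\), that is, the map adjoint to the pullback of sections. That compatibility is formal, since \(\theta\) was defined by rational pullback.

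Next, pass from terms to complexes. The differential \(\nabla_w\) is only \(\C\)-linear, so I will not argue with \(\mathcal O\)-linear functors on the complex. Instead I filter \(\mathcal A^{p,\alpha}\) by stupid truncations \(\sigma_{\geq q}\), \(q\ge p\). The morphism \(\mathcal A_{\mathscr X,w}^{p,\alpha}\to R\pi_*\mathcal A_{\mathscr X',w'}^{p,\alpha}\) is compatible with these finite filtrations; here I use \cref{prop:kontsevich-comparison-functoriality}(ii), and \(R\pi_*\) of a sheaf of \(\C\)-vector spaces is computed as for \(\mathcal O\)-modules because pushforward on abelian sheaves agrees with the quasi-coherent one. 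The graded pieces are single terms shifted, and there the map is \eqref{eq:abstract-kontsevich-termwise-descent}. The filtration is finite because the complex is bounded in degrees \(0\) to \(\dim\mathscr X\), so induction on \(q\) with the five lemma gives \eqref{eq:abstract-kontsevich-truncated-descent}. By \cref{def:kontsevich-induced-yu-comparison}, \(\Phi_{\pi,p-\alpha}\) is then a roof all of whose arrows are quasi-isomorphisms, so it is a quasi-isomorphism.

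For the consequences, any \(\lambda\in\Q\) has the form \(p-\alpha\) with \(p=\lceil\lambda\rceil\). Vanishing for all \(\alpha\) therefore gives every \(\Phi_{\pi,\lambda}\). For fixed \(\alpha\), the maps commute with integer transitions by \cref{prop:kontsevich-comparison-functoriality}(ii). Hence they induce maps of distinguished triangles \(F_\alpha^{p+1}\to F_\alpha^p\to\operatorname{gr}^p\), and the five lemma identifies the quotients. Matching the \(E_1\)-pages and compatible maps of filtered objects then gives agreement of the spectral sequences from \(E_1\) on. The coherent models agree with \(\Firr\mathcal K\) by \cref{prop:local-kontsevich-yu-package}(iv), so they are handled the same way. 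Equality of image subspaces for a modification is \cref{cor:levelwise-images-from-total-comparison}, combined with the compatibility square of \cref{prop:kontsevich-comparison-functoriality}(iii).

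The main obstacle is the first step: verifying that the projection-formula isomorphism composed with \(R\pi_*\theta\) really is the adjoint of the Kontsevich pullback used in \(\Phi\). The projection formula for proper \(\pi\) between Deligne--Mumford stacks with locally free \(\pi^*\)-source also needs care. I would prove it by flat base change to an étale atlas of \(\mathscr X\), which reduces to the standard scheme (or algebraic space) statement. For this I use that \(\pi\) is representable or, more generally, that proper pushforward of quasi-coherent sheaves on tame stacks commutes with flat base change.
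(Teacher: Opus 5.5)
Your proposal is correct and follows the paper's proof. Termwise descent comes from the short exact sequence for \(\theta_{\pi,\alpha}^a\) together with the projection formula and \eqref{eq:kontsevich-structure-sheaf-descent}. Your induction over the stupid truncations \(\sigma_{\geq q}\) with the five lemma is the paper's hyperdirect-image spectral sequence argument (\(E_1^{a,b}=R^b\pi_*\Omega^a_{\mathscr X',w'}(\alpha)\) concentrated in the row \(b=0\)) in another form, and the remaining consequences are handled exactly as in the paper: the choice \(p=\lceil\lambda\rceil\), \(\alpha=p-\lambda\), the transition compatibility, transport to the coherent models by \cref{prop:local-kontsevich-yu-package}\textup{(iv)}, and equality of images via the compatibility square and \cref{cor:levelwise-images-from-total-comparison}.
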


\begin{proof}
The maps \(\theta_{\pi,\alpha}^a\) are injective by
\cref{prop:stack-kontsevich-functoriality}.  Apply \(R\pi_*\) to
\[
 0\longrightarrow\pi^*\Omega_{\mathscr X,w}^a(\alpha)
 \longrightarrow\Omega_{\mathscr X',w'}^a(\alpha)
 \longrightarrow Q_\alpha^a\longrightarrow0.
\]
Since the Kontsevich lattice is locally free, the projection formula
and \eqref{eq:kontsevich-structure-sheaf-descent} identify the
derived direct image of the first term with
\(\Omega_{\mathscr X,w}^a(\alpha)\).  The assumed vanishing of the
third term then gives
\eqref{eq:abstract-kontsevich-termwise-descent}.

Filter the bounded truncated Kontsevich complex by stupid degree.
The hyperdirect-image spectral sequence has
\[
 E_1^{a,b}=R^b\pi_*
 \Omega_{\mathscr X',w'}^a(\alpha)
\]
and, by the termwise comparison, only the row \(b=0\).  This proves
\eqref{eq:abstract-kontsevich-truncated-descent}; inserting it into
the roof \eqref{eq:kontsevich-induced-yu-comparison-roof} proves the
levelwise Yu quasi-isomorphism.

For fixed \(\alpha\), compatibility with the transition maps is
\cref{prop:kontsevich-comparison-functoriality}\textup{(ii)}.  The
map \(\beta_{\pi,\alpha}\) preserves the stupid filtration on the
untruncated Kontsevich complexes, and its associated graded maps are
the termwise quasi-isomorphisms above; hence it identifies the
spectral sequences from \(E_1\).  Taking cones gives the comparison
of successive integer-slice quotients, and
\cref{prop:local-kontsevich-yu-package}\textup{(iv)} transports the
same conclusion to the coherent Yu models.  Finally, total
compatibility and equality of images follow from
\cref{prop:yu-total-comparison-compatibility} and
\cref{cor:levelwise-images-from-total-comparison}.
\end{proof}

Thus the ordinary-blowup and root-comparison arguments below need
only verify \eqref{eq:kontsevich-structure-sheaf-descent} and the
vanishing of the appropriate exceptional defects \(Q_\alpha^a\).

\subsection{Orbifold sectors and the Harder--Lee filtration}
\label{subsec:orbifold-irregular-hodge-filtration}

The construction so far is the twisted de Rham theory of the stack
itself.  Stabilizers enter through descent of differential forms, but
no additional summands are contributed by nonidentity automorphisms.
To compare with the orbifold theory of Harder--Lee, one must apply the
preceding construction to every component of the inertia stack and
then insert the usual age shifts.

Let
\[
 e\colon I\mathscr U\longrightarrow\mathscr U,
 \qquad
 I\mathscr U=\coprod_{\nu\in\Lambda}\mathscr U_\nu
\]
be the inertia stack and its decomposition into connected
components.  The set \(\Lambda\) is finite.  If \((x,g)\) is a
geometric point of \(\mathscr U_\nu\), write the eigenvalues of \(g\)
on \(T_x\mathscr U\) as
\(\exp(2\pi i m_j/r)\), where \(g\) has order \(r\) and
\(0\leq m_j<r\).  The number
\begin{equation*}
 a_\nu:=\age(\mathscr U_\nu)
 :=\sum_j\frac{m_j}{r}\in\Q
\end{equation*}
is locally constant and hence constant on \(\mathscr U_\nu\).  Put
\(w_\nu=e^*w|_{\mathscr U_\nu}\); see also
\cite[\S4]{BCS} for the inertia-sector and age conventions.

Each \(\mathscr U_\nu\) is a smooth separated Deligne--Mumford stack
and hence admits a good compactification by
\cref{prop:good-stack-compactification-existence}.

\begin{definition}[Modelwise orbifold twisted de Rham filtration]
\label{def:orbifold-irregular-hodge-filtration}
 A \emph{sector compactification datum}
 \(\boldsymbol{\mathscr X}=\{\mathscr X_\nu\}_{\nu\in\Lambda}\)
 consists of an NC rational stack compactification
 \[
 (\mathscr X_\nu,\mathscr D_\nu,\mathscr Z_\nu,
    \mathscr P_\nu,w_\nu)
 \]
 of each \((\mathscr U_\nu,w_\nu)\).  For \(q\in\Q\), define the
age-shifted orbifold twisted de Rham
cohomology by
\begin{equation}
 H_{\mathrm{dR,orb}}^q(\mathscr U,w)
 :=\bigoplus_{\substack{\nu\in\Lambda\\q-2a_\nu\in\Z}}
 H_{\mathrm{dR}}^{q-2a_\nu}(\mathscr U_\nu,w_\nu).
 \label{eq:orbifold-twisted-de-rham-cohomology}
\end{equation}
The datum \(\boldsymbol{\mathscr X}\) gives it the decreasing
modelwise filtration
\begin{equation}
 F_{\mathrm{orb},\boldsymbol{\mathscr X}}^\lambda
 H_{\mathrm{dR,orb}}^q(\mathscr U,w)
 :=\bigoplus_{\substack{\nu\in\Lambda\\q-2a_\nu\in\Z}}
 F_{\mathrm{Yu},\mathscr X_\nu}^{\lambda-a_\nu}
 H_{\mathrm{dR}}^{q-2a_\nu}(\mathscr U_\nu,w_\nu),
 \qquad \lambda\in\Q.
 \label{eq:orbifold-irregular-hodge-filtration}
\end{equation}
No product on \eqref{eq:orbifold-twisted-de-rham-cohomology} is
asserted.  For \(\lambda,\mu\in\Q\), set
\begin{equation*}
 f_{\mathrm{orb},\boldsymbol{\mathscr X}}^{\lambda,\mu}
 (\mathscr U,w)
 :=\dim\Gr_{F_{\mathrm{orb},\boldsymbol{\mathscr X}}}^{\lambda}
 H_{\mathrm{dR,orb}}^{\lambda+\mu}(\mathscr U,w).
\end{equation*}
\end{definition}

The orbifolds considered by Harder--Lee are specifically
\emph{toroidal orbifolds}: analytically they have polydisc charts
modulo finite diagonal groups
\cite[Definition~2.6]{HarderLee}.  Their assertion immediately before
\cite[Definition~3.9]{HarderLee} that every twisted sector is again
nondegenerate is valid for the local normal-crossing structure needed
by the Yu complex, after taking the reduced boundary and the actual
zero and pole divisors.  It should not be read as preserving the
global irreducibility requirement on the zero divisor in
\cite[Definition~3.2]{HarderLee}: its restriction to a sector can
split or be empty, and the restricted potential can be identically
zero.  Our NC rational condition deliberately allows these cases and
retains exactly the local zero--pole geometry used in their
construction.

For comparison with a compactified inertia stack, the following
condition records the only additional geometric point.

\begin{definition}[Inertia-admissible compactification]
\label{def:inertia-admissible-compactification}
Let
\((\mathscr X,\mathscr D,\mathscr Z,\mathscr P,w)\) be an NC
rational stack compactification of \((\mathscr U,w)\).  It is
\emph{inertia-admissible} if, for every connected component
\(\mathscr V\subset I\mathscr X\) for which
\(\mathscr V^\circ:=\mathscr V\times_{\mathscr X}\mathscr U\) is
nonempty, the data
\[
 \mathscr D_{\mathscr V}:=(\mathscr V\setminus
 \mathscr V^\circ)_{\mathrm{red}},
 \qquad
 w_{\mathscr V}:=e^*w|_{\mathscr V},
\]
together with the actual relatively prime zero and pole divisors of
\(w_{\mathscr V}\), form an NC rational stack compactification of
\((\mathscr V^\circ,w_{\mathscr V}|_{\mathscr V^\circ})\).  In
the case \(w_{\mathscr V}\equiv0\), both divisors are set equal to
zero as in \cref{def:nc-rational-stack-pair}.  In
particular, the polar divisor is
\(\divisor_\infty(w_{\mathscr V})\), not the formal restriction of
\(\mathscr P\); the latter can be larger if restriction causes
zero--pole cancellation.
\end{definition}

\begin{lemma}\label{lem:diagonal-inertia-admissibility}
\begin{enumerate}[label=\textup{(\roman*)}]
\item A strict NC rational stack compactification is
  inertia-admissible.
\item Let \((X,D,w)\) be a nondegenerate projective toroidal orbifold
  Landau--Ginzburg compactification in the sense of
  \cite[Definitions~2.6, 3.1, and~3.2]{HarderLee}, and let
  \((\mathscr X,\mathscr D,w)\) be its associated stack
  compactification.  Then it is inertia-admissible in the NC rational
  sense of \cref{def:inertia-admissible-compactification}.  No
  irreducibility is asserted for the zero divisor on a sector, and a
  sector on which the restricted potential is zero is assigned zero
  actual zero and pole divisors.
\end{enumerate}
\end{lemma}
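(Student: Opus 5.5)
Both parts are local, and the plan is to check them étale-locally at a geometric point $(x,g)$ of a component $\mathscr V\subset I\mathscr X$ with $\mathscr V^\circ\neq\varnothing$. The inertia stack of a smooth Deligne--Mumford stack is smooth: étale-locally near $x$ we have $\mathscr X=[W/G_x]$ with $W$ smooth and $G_x$ the stabilizer, and $\mathscr V$ is locally $[W^g/C(g)]$. The fixed locus $W^g$ is smooth, and $T_xW^g=(T_xW)^g$. Properness of $\mathscr V$ holds because $I\mathscr X\to\mathscr X$ is finite. Density of $\mathscr V^\circ$ in $\mathscr V$ follows once we know that $\mathscr D_{\mathscr V}$ is a divisor not containing a component of $\mathscr V$, and this comes out of the local computation below.

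\textbf{Step 1: normal form of $g$.} For (i), the boundary is labeled SNC, and each $\mathscr D_i$ is a Cartier divisor on the stack. Hence $g$ preserves each local branch $x_i=0$ through $x$ (branches are not permuted, since they are distinct components of the pullback of the globally given $\mathscr D_i$). Linearizing $g$ by Cartan's lemma, which applies because $g$ has finite order, we may choose étale coordinates in which $g$ acts diagonally and each boundary equation $x_i$ is a $g$-eigenfunction. Then $W^g$ is the coordinate subspace where the non-invariant coordinates vanish.

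\textbf{Step 2: restrict the boundary and the divisors.} The invariant boundary coordinates $x_i$ restrict to part of a coordinate system on $W^g$. A boundary coordinate with nontrivial character vanishes identically on $W^g$; such a $W^g$ would lie in $\mathscr D$, contradicting $\mathscr V^\circ\neq\varnothing$. So $\mathscr D_{\mathscr V}$ is NC, defined by the invariant $x_i$, and it is even SNC, though we only need NC.

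For the function, $w$ is $g$-invariant. Near a polar point, use the Chen--Yu form \eqref{eq:chen-yu-nondegenerate-local-form}. The polar coordinates $x_i$ are invariant, by the argument above. The monomial $x^e$ is invariant, so the unit $u$ in $w=uz/x^e$ is invariant, and $uz$ is invariant. After absorbing $u$, $z$ is invariant. Restricting to $W^g$ gives $w_{\mathscr V}=z|/x^e$ or $1/x^e$. If $z|_{W^g}$ is still a coordinate, this is again a Chen--Yu normal form.

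The main obstacle is the case where $z|_{W^g}$ is not a coordinate, because $dz$ restricted to $(T_xW)^g$ vanishes. I would handle it by choosing $z$ as an invariant eigen-coordinate, which is possible by averaging. An invariant function with nonzero differential at $x$ restricts to $W^g$ with nonzero differential, unless $dz$ lies in the nontrivial-character part. Averaging kills that part, so $dz$ is invariant and survives on $W^g$. If instead $z$ is forced to be a nontrivial eigenfunction, which is impossible because $z$ is invariant, then $\mathscr Z$ misses $W^g$ near $x$ and the restricted form is $1/x^e$ times a unit.

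We must also track cancellation: $\divisor_\infty(w_{\mathscr V})$ is read off from this restricted normal form, not from the formal restriction of $\mathscr P$. The case $w_{\mathscr V}\equiv0$ is covered by the convention in \cref{def:inertia-admissible-compactification}. Away from the polar locus no condition is needed. Condition (iv) of \cref{def:nc-rational-stack-pair} for $\mathscr V$ then follows from the restricted normal forms.

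\textbf{Step 3: the Harder--Lee case.} For (ii), the same argument works directly in the Harder--Lee polydisc charts $\mathbb D^n/G$ with $G$ diagonal. There every $g$ is already diagonal in coordinates adapted to $D$ and to the nondegenerate zero divisor (Harder--Lee, Definitions 2.6 and 3.2). So Step 1 is automatic, even though branches need not be globally labeled. Step 2 is then repeated verbatim. The assertions about reducibility and identically zero potentials are exactly the conventions adopted above and require no further argument.
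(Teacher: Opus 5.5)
Your proposal is correct and follows essentially the paper's argument. A nontrivial normal character on a boundary branch through $(x,g)$ would put a neighbourhood of $(x,g)$ in $\mathscr V$ inside the boundary, and this contradicts $\mathscr V^\circ\neq\varnothing$ once you use that the smooth connected $\mathscr V$ is irreducible, a step you should state explicitly; invariance of $w$ then forces the numerator $z$ to have trivial character, so the Chen--Yu form restricts, and the Harder--Lee case is handled the same way in the diagonal charts. One small slip: in your aside, a $z$ of nontrivial character would vanish identically on $W^g$, so $W^g\subset\mathscr Z$ (the identically-zero case) rather than $\mathscr Z$ missing $W^g$; since you correctly show this cannot occur at polar points, it does not affect the proof.
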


\begin{proof}
For a separated Deligne--Mumford stack the inertia morphism is finite.
Moreover, fixed loci of finite-order automorphisms of a smooth
characteristic-zero chart are smooth.  Hence every component
\(\mathscr V\subset I\mathscr X\) is smooth and proper; only the
boundary and the restricted potential require verification.

For \textup{(i)}, write the chosen labeled boundary as
\(\boldsymbol{\mathscr D}=\{\mathscr D_i\}_{i\in I}\).  At a point
\((x,g)\in\mathscr V\cap\mathscr D_i\), the automorphism \(g\)
preserves \(\mathscr D_i\) and acts on its normal line.  A nontrivial
normal character would make the fixed locus locally lie in
\(\mathscr D_i\); then \(\mathscr V\cap\mathscr D_i\) would contain
an open subset of \(\mathscr V\).  Since \(\mathscr V\) is smooth and
connected, hence irreducible, this would give
\(\mathscr V\subset\mathscr D_i\).  This contradicts
\(\mathscr V^\circ\ne\varnothing\).  Thus every nonempty restriction
\(\mathscr D_i|_{\mathscr V}\) is transverse, and these restrictions
form a labeled SNC divisor.

At a point over \(|\mathscr P|\), use
\eqref{eq:chen-yu-nondegenerate-local-form}.  All denominator
characters are trivial by the preceding paragraph.  In the form with
a numerator, invariance of \(w\) makes its conormal character the
product of the denominator characters with their multiplicities, so
it too is trivial.  The normal form therefore restricts to
\(\mathscr V\).  If instead \(w|_{\mathscr V}\equiv0\), then
\(\mathscr V\) cannot meet \(|\mathscr P|\), by the same normal-form
argument; its actual zero and pole divisors are both zero by
convention.  This proves \textup{(i)}.

For \textup{(ii)}, work in a Harder--Lee uniformizing chart
\(\Delta^n/G\), with \(G\) finite diagonal, and fix \(g\in G\).
The local component of the sector is the fixed locus \((\Delta^n)^g\),
modulo the centralizer of \(g\).  In normal-crossing coordinates
adapted to the boundary, a boundary coordinate of nontrivial
\(g\)-weight vanishes on \((\Delta^n)^g\).  Such a component is
contained in \(D\) and does not meet the open model.  On every
component that does meet the open model, the remaining boundary
coordinates restrict to coordinates on the fixed locus and give the
reduced NC boundary.

The same chart is adapted to the local zero divisor.  Its defining
coordinate either restricts to a parameter transverse to the
boundary, or vanishes identically on the fixed component.  In the
first case the restricted rational function retains the local NC
zero--pole form, after combining coincident restricted polar branches
in its actual polar divisor.  In the second case it is identically
zero, and both actual divisors are set to zero.  Thus the reduced
boundary and the actual relatively prime zero and pole divisors give
an NC rational sector model.  The construction is invariant under
the centralizer of \(g\), hence descends from the uniformizing chart.
\end{proof}

\begin{theorem}[Harder--Lee comparison]
\label{thm:harder-lee-orbifold-comparison}
Let \((X,D,w)\) be a nondegenerate projective toroidal orbifold
Landau--Ginzburg compactification in the sense of Harder--Lee
\cite[Definitions~2.6, 3.1, and~3.2]{HarderLee}.
Let \((\mathscr X,\mathscr D,w)\) be its associated Deligne--Mumford
compactification, and put
\[
 \mathscr U:=\mathscr X\setminus\mathscr D,
\]
and let \(\boldsymbol{\mathscr X}\) be the sector compactification
datum supplied by
\cref{lem:diagonal-inertia-admissibility}\textup{(ii)}.  Then there is
a canonical isomorphism of \(\Q\)-graded filtered vector spaces
\begin{equation}
 \bigl(H_{\mathrm{orb}}^*(X\setminus D,w),F_{\mathrm{irr}}\bigr)
 \xrightarrow{\ \sim\ }
 \bigl(H_{\mathrm{dR,orb}}^*(\mathscr U,w),
       F_{\mathrm{orb},\boldsymbol{\mathscr X}}\bigr).
 \label{eq:harder-lee-stack-orbifold-comparison}
\end{equation}
\end{theorem}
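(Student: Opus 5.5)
The plan is to compare both sides sector by sector, since both are built from the same decomposition of the inertia. Harder--Lee define \(H_{\mathrm{orb}}^q(X\setminus D,w)\) as a direct sum over twisted sectors \((X\setminus D)_g\) of the inertia of the toroidal orbifold. The summands are shifted by \(2\age\), and each carries the Yu filtration shifted by the age and computed from the compactified sector with its induced boundary and potential \cite[\S3.1, Definition~3.9]{HarderLee}. The first step is therefore to match indexing data. The components of the inertia of the orbifold \(X\) meeting \(X\setminus D\) correspond to the components \(\mathscr V\subset I\mathscr X\) with \(\mathscr V^\circ\neq\varnothing\), and these restrict to the sectors \(\mathscr U_\nu\) of \(I\mathscr U\). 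Since \(I\mathscr U=I\mathscr X\times_{\mathscr X}\mathscr U\) is open in \(I\mathscr X\), this correspondence is a bijection \(\nu\leftrightarrow\mathscr V_\nu\). Ages agree because they are computed from the eigenvalues of \(g\) on the tangent space at a point of the open part, which is the same datum on both sides. So the \(\Q\)-gradings \(q-2a_\nu\in\Z\) coincide.

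Next, for each sector I will identify the Harder--Lee filtered complex on the compactified sector with \(\Firr^{\lambda-a_\nu}\mathcal K_{\mathscr X_\nu,w_\nu}^\bullet\). Here \(\mathscr X_\nu\) is the NC rational model supplied by \cref{lem:diagonal-inertia-admissibility}(ii). Harder--Lee extend Yu's complexes to the orbifold sector through local uniformizing charts \(\Delta^n/G\), taking \(G\)-invariant (equivalently centralizer-invariant) sections of Yu's logarithmic lattices on the fixed loci \((\Delta^n)^g\). The stack-side sheaves of \cref{def:stack-yu-filtered-complex} are defined by étale descent, so on a quotient chart \([\Delta^n/G]\) their sections are exactly these invariants. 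I will check this degreewise. Both sides use the reduced boundary and the actual pole divisor of the restricted potential, and \cref{lem:diagonal-inertia-admissibility}(ii) records that the sector model takes the actual polar divisor, in agreement with Harder--Lee's convention after restriction. Hypercohomology on the coarse orbifold of the invariant complex equals hypercohomology on the stack, because coarse pushforward is exact on quasi-coherent sheaves \cite{AOV}. Consequently the images in \(H_{\mathrm{dR}}^{q-2a_\nu}(\mathscr U_\nu,w_\nu)\) agree. For the total complex, \cref{prop:stack-logarithmic-comparison} identifies the target with the twisted de Rham cohomology of the open sector on both sides.

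Finally I take the direct sum over \(\nu\). The isomorphism is the sum of the sectorwise identities, so it is canonical and preserves \(F^\lambda\) with the shift \(\lambda-a_\nu\) on both sides, giving \eqref{eq:harder-lee-stack-orbifold-comparison}.

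I expect the main obstacle to be the convention check on the Harder--Lee side. I must verify that their sector filtration really uses the reduced restricted boundary and the actual polar divisor, rather than the formal restriction of \(D\) and \(P\). If their construction uses the formal restriction \(\mathscr P|_{\mathscr V}\), I would first try to show that zero--pole cancellation cannot occur in the diagonal toroidal charts. The argument would be that a numerator coordinate of nontrivial weight makes \(w\) vanish on the sector, and otherwise it stays transverse. In that case the two divisors coincide wherever \(w_{\mathscr V}\not\equiv0\), and on the remaining sectors both complexes reduce to the untwisted logarithmic de Rham complex.
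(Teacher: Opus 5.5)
You have the right outer structure: the sector matching, the chartwise identification by invariants, the use of \cref{lem:diagonal-inertia-admissibility}\textup{(ii)} for the actual polar divisor, and the final age-shifted direct sum all agree with the paper. The gap is the step between analytic and algebraic objects.

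Harder--Lee's sector complexes are built from polydisc uniformizing charts \(\Delta^n/G\). They live on the analytic orbifold sector \(X_\nu\). These charts are not \'etale charts of the algebraic stack \(\mathscr X_\nu\), whose sheaves are defined on the lisse--\'etale site. So the invariant computation does not identify their complex with the stack complex. It identifies it with \(\rho_{\nu,*}\) of the \emph{analytification}, where \(\rho_\nu\colon\mathscr X_\nu^{\mathrm{an}}\to X_\nu\). The exactness of coarse pushforward from \cite{AOV} that you cite is a statement about algebraic quasi-coherent sheaves. It says nothing about analytic hypercohomology on \(X_\nu\).

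Since \(F_{\mathrm{Yu},\mathscr X_\nu}\) is defined by algebraic hypercohomology on \(\mathscr X_\nu\), you need a GAGA comparison. It cannot be applied to the complexes directly, because the differentials are only \(\C\)-linear. The paper applies proper stack GAGA \cite[Theorem~7.1]{PortaYuGAGA} termwise to coherent terms. This matches the \(E_1\)-pages of the stupid-filtration spectral sequences, for the total complex and for every filtered level, compatibly with the level inclusions. Only then do the hypercohomology maps, and hence their images, agree.

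This comparison must be made on a bounded coherent model. The total complex \(\mathcal K_{\mathscr X_\nu,w_\nu}^\bullet\) has unbounded poles and is not coherent. In the paper, Harder--Lee's filtered complex is identified with \((\mathcal C_{\mathscr X_\nu,w_\nu}^\bullet,\Firr)\), whose levels \(\lambda\le0\) are all of \(\mathcal C_{\mathscr X_\nu,w_\nu}^\bullet\). Your proposed degreewise match with \(\Firr^{\lambda}\mathcal K_{\mathscr X_\nu,w_\nu}^\bullet\) therefore does not hold literally at nonpositive levels. The bridge is the filtered quasi-isomorphism of \cref{prop:local-kontsevich-yu-package}\textup{(iv)}, followed by \cref{prop:stack-logarithmic-comparison} to identify the target before taking images. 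With the GAGA step and this coherent-model detour inserted, your argument becomes the paper's.
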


\begin{proof}
Fix a connected component
\(\mathscr X_\nu\subset I\mathscr X\) meeting \(\mathscr U\), and let
\(X_\nu\) be the corresponding analytic orbifold sector.  Write
\[
 \rho_\nu\colon\mathscr X_\nu^{\mathrm{an}}\longrightarrow X_\nu,
 \qquad
 \mathcal C_\nu^\bullet
 :=\Firr^0\mathcal K_{\mathscr X_\nu,w_\nu}^\bullet .
\]
On a quotient chart \([V/G]\), analytic coarse pushforward amounts to
taking \(G\)-invariant sections.  Since finite-group invariants are
exact in characteristic zero, the chartwise identifications of
logarithmic forms with prescribed pole bounds glue and give
\[
 \mathcal C_{\nu,\mathrm{orb}}^\bullet
 =\rho_{\nu,*}\bigl((\mathcal C_\nu^\bullet)^{\mathrm{an}}\bigr),
 \qquad
 F_{\mathrm{irr}}^\lambda\mathcal C_{\nu,\mathrm{orb}}^\bullet
 =\rho_{\nu,*}\bigl(
   (\Firr^\lambda\mathcal C_\nu^\bullet)^{\mathrm{an}}\bigr).
\]
By \cref{lem:diagonal-inertia-admissibility}\textup{(ii)},
\(\mathscr P_\nu=\divisor_\infty(w_\nu)\); hence these are precisely
the sectorwise Yu complex and filtration of Harder--Lee
\cite[\S\S2.3, 3.1, and Definition~3.9]{HarderLee}, including the
convention at nonpositive levels.

Every term of these bounded complexes is coherent on the proper stack
\(\mathscr X_\nu\).  Proper stack GAGA
\cite[Theorem~7.1]{PortaYuGAGA}, applied termwise, identifies the
\(E_1\)-pages of the algebraic and analytic hypercohomology spectral
sequences, both for the total complex and for every filtered level.
It therefore gives compatible isomorphisms on hypercohomology.
Using
\cref{prop:local-kontsevich-yu-package}\textup{(iv),
prop:stack-logarithmic-comparison} and taking the images of the
filtered-level maps identifies Harder--Lee's sector filtration with
\(F_{\mathrm{Yu},\mathscr X_\nu}\).

Finally, Harder--Lee's shifts of cohomological degree and filtration
index are respectively \(2a_\nu\) and \(a_\nu\), exactly as in
\eqref{eq:orbifold-twisted-de-rham-cohomology} and
\eqref{eq:orbifold-irregular-hodge-filtration}.  Passing to connected
sector components and taking their finite direct sum proves
\eqref{eq:harder-lee-stack-orbifold-comparison}.
\end{proof}

\begin{remark}[Open inertia versus compactified inertia]
\label{rem:open-inertia-not-compactified-inertia}
Definition \ref{def:orbifold-irregular-hodge-filtration} uses
\(I\mathscr U\), not all components of a chosen \(I\mathscr X\).
A root construction supported on the boundary can create new inertia
components contained entirely in that boundary, but it is the
identity on \(\mathscr U\); such components have empty open sector
and contribute nothing.  Conversely, changing the stabilizer
structure on \(\mathscr U\) changes its inertia sectors and is not a
compactification modification.  The orbifold filtration should, and
in general does, detect that change.
\end{remark}
 \section{Elementary Stacky Modifications and Filtered Invariance}
\label{sec:elementary-stacky-modifications}

The preceding section reduced filtered invariance to the vanishing of
the Kontsevich lattice defects under derived pushforward.  We now
verify that criterion for the two elementary operations occurring in
relative stacky weak factorization: roots of labeled boundary
components and ordinary blowups in smooth boundary-compatible
centers.  Although motivated by good models, the calculations are
formulated for NC rational pairs whenever the same local geometry is
available.

\begin{definition}[Elementary admissible modification]
\label{def:elementary-admissible-modification}
Let
\((\mathscr X,\mathscr D,\mathscr Z,\mathscr P,w)\) be an NC
rational stack pair.  An \emph{elementary admissible modification}
is one of the following morphisms, equipped with the reduced
inverse-image boundary, the pullback potential, and the transformed
zero and pole divisors specified below:
\begin{enumerate}[label=\textup{(\alph*)}]
\item the blowup of a boundary-admissible center in the sense of
  \cref{def:boundary-admissible-center}; or
\item an \(r\)-th root construction, \(r\geq2\), along a smooth
  effective Cartier component \(\mathscr H\) of \(\mathscr D\).
\end{enumerate}
The first is called a \emph{boundary-admissible ordinary blowup},
and the second a \emph{boundary root modification}.
\end{definition}

In the SNC setting these are the smooth stacky blowups of Harper and
Bergh--Rydh
\cite[Definition~2.2.2]{Harper}
\cite[\S\S3.4--3.6]{BerghRydh}.

\subsection{Root modifications and their filtered comparison}
\label{subsec:root-modifications-filtered-comparison}

Although boundary roots will later be applied to good stack
compactifications, neither their local geometry nor their filtered
comparison requires properness, morphicity of \(w\), or a labeled
SNC boundary.

The algebro-geometric properties of root stacks used below are
standard: we use Cadman's root and quotient descriptions
\cite[\S\S2.2--2.4]{Cadman}, tame coarse pushforward from
\cite[Definition~3.1 and Theorem~3.2]{AOV}, logarithmic pullback from
\cite[Lemma~3.5]{BorneLaaroussi}, and the root transform of standard
pairs from \cite[\S3.5]{BerghRydh}.  The filtered argument combines
these facts with the Kontsevich--Yu lattice description of
\cite{Yu,ESY,ChenYu}; its only new inputs are the floor identity and
the nontrivial-character vanishing made explicit below.

Let \(\mathscr X\) be a smooth
Deligne--Mumford stack over \(\C\), let
\(\mathscr H\subset\mathscr X\) be a smooth effective Cartier
divisor, and let \(r\geq2\).  Write
\begin{equation*}
 \pi\colon
 \mathscr Y:=\sqrt[r]{(\mathscr X,\mathscr H)}
 \longrightarrow \mathscr X
\end{equation*}
for the \(r\)-th root stack.
Denote the universal root line bundle and section by
\((\mathscr L,\sigma)\), and put
\[
 \mathscr H_{\mathscr Y}:=(\sigma=0).
\]

\begin{proposition}\label{prop:boundary-root-basic-properties}
In the situation above, the following assertions hold.
\begin{enumerate}[label=\textup{(\roman*)}]
\item
Let \(V=\Spec A\to\mathscr X\) be an \'etale scheme chart on which
\(\cO_{\mathscr X}(\mathscr H)\) is trivial and
\(\mathscr H|_V=(h=0)\).  Put
\begin{equation*}
 B:=A[t]/(t^r-h),
 \qquad
 \zeta\cdot t:=\zeta t\quad(\zeta\in\mu_r),
\end{equation*}
where \(\mu_r\) acts trivially on \(A\).  There is a Cartesian
diagram
\begin{equation}
 \begin{tikzcd}
 {[\Spec B/\mu_r]} \ar[r] \ar[d] &
 \mathscr Y \ar[d,"\pi"]\\
 \Spec A \ar[r] & \mathscr X .
 \end{tikzcd}
 \label{diag:root-local-chart}
\end{equation}
Under this identification, \(\mathscr H_{\mathscr Y}\) is cut out
by \(t\).  In particular,
\begin{equation}
 \pi^*\mathscr H=r\mathscr H_{\mathscr Y}
 \label{eq:root-divisor-pullback}
\end{equation}
as Cartier divisors.
\item
The stack \(\mathscr Y\) is a smooth Deligne--Mumford stack, and
\(\pi\) is proper and an isomorphism over
\(\mathscr X\setminus\mathscr H\).  It is a relative
coarse-moduli morphism, in the sense that after every \'etale base
change from a scheme chart of \(\mathscr X\), the induced map is the
coarse-moduli morphism; on \eqref{diag:root-local-chart}, its target
is \(\Spec(B^{\mu_r})=\Spec A\).  Direct image is exact on
quasi-coherent sheaves, and for every quasi-coherent
\(\cO_{\mathscr Y}\)-module \(\mathscr E\),
\begin{equation*}
 R^q\pi_*\mathscr E=0\quad(q>0),
 \qquad
 \pi_*\cO_{\mathscr Y}=\cO_{\mathscr X}.
\end{equation*}
\item
Let \(\mathscr D\) be a reduced NC divisor on \(\mathscr X\), and
assume that \(\mathscr H\) is a component of \(\mathscr D\).  Then
\[
 \mathscr D_{\mathscr Y}
 :=(\pi^{-1}\mathscr D)_{\mathrm{red}}
\]
is NC and contains \(\mathscr H_{\mathscr Y}\) as a component.
Every component of \(\mathscr D\) different from
\(\mathscr H\) has pullback multiplicity one at its generic point,
whereas \(\mathscr H\) has pullback multiplicity \(r\).  If
\(\boldsymbol{\mathscr D}=\{\mathscr D_i\}_{i\in I}\) is finite
labeled SNC and \(\mathscr H=\mathscr D_h\), then the reduced
pullbacks of the \(\mathscr D_i\), with \(\mathscr D_h\) replaced by
\(\mathscr H_{\mathscr Y}\), form a finite labeled SNC boundary.
\end{enumerate}
\end{proposition}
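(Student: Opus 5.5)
The plan is to reduce everything to Cadman's local quotient description and then descend, since every assertion is étale local on \(\mathscr X\) or intrinsic. For \textup{(i)}, I will use the moduli description of \(\sqrt[r]{(\mathscr X,\mathscr H)}\) as parametrizing pairs \((\mathscr L,\sigma)\) with an isomorphism \(\mathscr L^{\otimes r}\simeq\cO(\mathscr H)\) carrying \(\sigma^r\) to the canonical section. On a chart \(V=\Spec A\) where \(\cO(\mathscr H)\) is trivialized with section \(h\), a root over a test scheme amounts to a line bundle \(L\), a section \(t\) and a trivialization of \(L^{\otimes r}\) sending \(t^r\) to \(h\). Such data form the quotient \([\Spec A[t]/(t^r-h)\,/\,\mu_r]\), because trivializations of \(L\) compatible with the given one on \(L^{\otimes r}\) form a \(\mu_r\)-torsor \cite[\S2.4]{Cadman}. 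This is the Cartesian square \eqref{diag:root-local-chart}. The universal section is \(t\), so \(\mathscr H_{\mathscr Y}=(t=0)\). The identity \(t^r=h\) then gives \eqref{eq:root-divisor-pullback} as Cartier divisors.

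For \textup{(ii)}, smoothness comes from the fact that \(h\) is part of a regular system of parameters on \(V\), since \(\mathscr H\) is smooth. Hence \(B=A[t]/(t^r-h)\) is regular, with \(t\) replacing \(h\) as a parameter, and the quotient by the finite group \(\mu_r\) is a smooth DM stack in characteristic zero. Off \(h=0\), the extension \(B\) is finite étale Galois with group \(\mu_r\) over \(A\), so the quotient is \(\Spec A\); this gives the isomorphism over \(\mathscr X\setminus\mathscr H\). Next I will compute \(B^{\mu_r}=A\), using the weight decomposition \(B=\bigoplus_{0\le i<r}At^i\) in which \(t^i\) has character \(i\). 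Thus \([\Spec B/\mu_r]\to\Spec A\) is a coarse moduli space, and its formation commutes with flat base change, which makes \(\pi\) a relative coarse moduli morphism. Properness follows because the map is finite onto the coarse space and coarse maps of separated DM stacks are proper. Exactness of \(\pi_*\) and the vanishing of \(R^q\pi_*\) are local and follow from tameness, since \(\mu_r\)-invariants are exact, as in \cite[Theorem~3.2]{AOV}. Finally, \(\pi_*\cO_{\mathscr Y}=B^{\mu_r}=A\).

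For \textup{(iii)}, I choose NC coordinates \(x_1,\dots,x_n\) on an étale chart with \(\mathscr D=(x_1\cdots x_s=0)\). At a point of \(\mathscr H\), smoothness of \(\mathscr H\) lets me arrange that a single branch is \(\mathscr H=(x_1=0)\). There is one subtlety: the local branches of \(\mathscr D\) through a point of \(\mathscr H\) could include \(\mathscr H\) only once, because \(\mathscr H\) is smooth, so the chart can be taken with \(h=x_1\). Then \(B=A[t]/(t^r-x_1)\) has coordinates \(t,x_2,\dots,x_n\), and the reduced preimage is \((t\,x_2\cdots x_s=0)\), which is NC on the atlas \(\Spec B\). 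The multiplicities are then read off directly: \(\pi^*x_1=t^r\) gives \(r\), and \(\pi^*x_i=x_i\) gives one for \(i\ge2\). For the labeled statement, each \(\pi^{-1}\mathscr D_i\) with \(i\ne h\) is the base change of a smooth divisor along a flat map and is transverse to \(t\). Its reduced structure is therefore smooth and connected, with connectedness holding because \(\pi\) is a homeomorphism on underlying spaces. The intersections \(\bigcap_J\) are computed in the coordinates above and have codimension \(|J|\). This matches \cite[\S3.5]{BerghRydh}.

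The main obstacle I expect is the careful bookkeeping in \textup{(i)}. There I must check that the chart is genuinely Cartesian, with the trivialization of \(\mathscr L^{\otimes r}\) correctly accounted for, and that connectedness and smoothness of the labeled pieces descend from the atlas. Everything else consists of routine local computations.
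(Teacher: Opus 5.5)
Your proposal is correct and follows essentially the same route as the paper: Cadman's local quotient chart for \textup{(i)}, smoothness from \(h\) being a regular parameter, exactness of \(\mu_r\)-invariants with \(B^{\mu_r}=A\) for \textup{(ii)}, and the coordinate computation \(x_1=t^r\) for \textup{(iii)}. The only difference is that you derive some details yourself rather than citing them: the Cartesian chart from the moduli problem, and connectedness of the labeled pieces from the homeomorphism \(|\mathscr Y|\to|\mathscr X|\). The paper cites Cadman for the first and leaves the second implicit.
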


\begin{proof}
All assertions are \'etale local on \(\mathscr X\).  Root stacks
commute with base change \cite[Remark~2.2.3]{Cadman}, and Cadman's
quotient description \cite[Example~2.4.1]{Cadman} gives
\eqref{diag:root-local-chart}.  If \(h\) is chosen as a local
parameter, then \(B\) is smooth with parameters
\(t,x_2,\ldots,x_n\).  The Deligne--Mumford and complement assertions
are \cite[Theorem~2.3.3 and Example~2.4.2]{Cadman}; after a scheme-chart
base change, \(\pi\) is the coarse-moduli morphism by
\cite[Corollary~2.3.7]{Cadman}, hence is proper.

On the quotient chart, direct image is the invariants functor on
\(\mu_r\)-equivariant \(B\)-modules.  Since \(\mu_r/\C\) is linearly
reductive, this functor is exact
\cite[Definition~3.1 and Theorem~3.2(a),(b)]{AOV}; moreover
\(B^{\mu_r}=A\).  \'Etale descent therefore gives \textup{(ii)}.

Finally, in coordinates
\(\mathscr H=(x_1=0)\) and
\(\mathscr D=(x_1\cdots x_\ell=0)\), the root chart has
\(x_1=t^r\) and reduced boundary
\((t x_2\cdots x_\ell=0)\).  This proves \textup{(i)} and
\textup{(iii)}, including the multiplicities and the labeled SNC
assertion; compare the root transform of standard pairs in
\cite[\S3.5]{BerghRydh}.
\end{proof}

\begin{lemma}\label{lem:boundary-root-pullback-rounding}
Let
\((\mathscr X,\mathscr D,\mathscr Z,\mathscr P,w)\)
be an NC rational stack pair, and let \(\mathscr H\) be a smooth
effective Cartier component of \(\mathscr D\).  Let
\[
 \pi\colon\mathscr Y=\sqrt[r]{(\mathscr X,\mathscr H)}
 \longrightarrow\mathscr X
\]
be the \(r\)-th root, and put
\[
 \mathscr D_{\mathscr Y}:=(\pi^{-1}\mathscr D)_{\mathrm{red}},
 \qquad
 w_{\mathscr Y}:=\pi^*w.
\]
With the convention of \cref{def:nc-rational-stack-pair} on
components where \(w\) is identically zero, one has
\begin{equation}
 \divisor(w_{\mathscr Y})
 =\pi^*\divisor(w)
 =\pi^*\mathscr Z-\pi^*\mathscr P.
 \label{eq:boundary-root-rational-divisor-pullback}
\end{equation}
In particular,
\[
 \mathscr Z_{\mathscr Y}:=\pi^*\mathscr Z,
 \qquad
 \mathscr P_{\mathscr Y}:=\pi^*\mathscr P
\]
are the zero and pole divisors of \(w_{\mathscr Y}\), and
\[
 (\mathscr Y,\mathscr D_{\mathscr Y},
  \mathscr Z_{\mathscr Y},\mathscr P_{\mathscr Y},w_{\mathscr Y})
\]
is an NC rational stack pair.

Write
\(\mathscr H=\coprod_\nu\mathscr H_\nu\), put
\(\mathscr H_{\mathscr Y,\nu}
 :=(\pi^{-1}\mathscr H_\nu)_{\mathrm{red}}\), and let
\[
 e_\nu:=\operatorname{ord}_{\mathscr H_\nu}(\mathscr P).
\]
For \(\alpha\in\Q\cap[0,1)\), define
\begin{equation*}
 \delta_{\alpha,\nu}
 :=\lfloor\alpha r e_\nu\rfloor
   -r\lfloor\alpha e_\nu\rfloor
 =\lfloor r\{\alpha e_\nu\}\rfloor,
 \qquad
 \Delta_\alpha
 :=\sum_\nu\delta_{\alpha,\nu}
 \mathscr H_{\mathscr Y,\nu}.
\end{equation*}
Then \(0\leq\delta_{\alpha,\nu}\leq r-1\), and
\begin{equation}
 \lfloor\alpha\mathscr P_{\mathscr Y}\rfloor
 =\pi^*\lfloor\alpha\mathscr P\rfloor+\Delta_\alpha.
 \label{eq:boundary-root-rounded-divisor}
\end{equation}
Equivalently,
\begin{equation*}
 \mathcal O_{\mathscr Y}
 \bigl(\lfloor\alpha\mathscr P_{\mathscr Y}\rfloor
       -\mathscr P_{\mathscr Y}\bigr)
 \simeq
 \pi^*\mathcal O_{\mathscr X}
 \bigl(\lfloor\alpha\mathscr P\rfloor-\mathscr P\bigr)
 \otimes\mathcal O_{\mathscr Y}(\Delta_\alpha).
\end{equation*}
\end{lemma}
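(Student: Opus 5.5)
The plan is to check everything étale locally on \(\mathscr X\), using the chart \eqref{diag:root-local-chart} of \cref{prop:boundary-root-basic-properties}. All the divisor identities are equalities of Cartier divisors on the smooth stack \(\mathscr Y\). It therefore suffices to compare multiplicities along the prime divisors of \(\mathscr Y\), and every such prime lies either over \(\mathscr X\setminus\mathscr H\) or is one of the \(\mathscr H_{\mathscr Y,\nu}\).

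\emph{Step 1: the divisor of \(w_{\mathscr Y}\).}
\begin{itemize}
\item Write \(w=A/B\) locally with \(A,B\) coprime in the regular local ring, so that \(\divisor(w)=\divisor(A)-\divisor(B)\).
\item The map \(\pi\) is flat, since \(B=A[t]/(t^r-h)\) is free over \(A\). Hence \(\pi^*\) of Cartier divisors is just pullback of local equations, and \(\divisor(\pi^*w)=\pi^*\divisor(w)\) holds as Cartier divisors.
\item Both \(\pi^*\mathscr Z\) and \(\pi^*\mathscr P\) are effective. They have no common prime component: away from \(\mathscr H\) the map \(\pi\) is an isomorphism. Over \(\mathscr H_\nu\), the only new prime is \(\mathscr H_{\mathscr Y,\nu}\), whose pullback coefficient is \(r\) times the coefficient of \(\mathscr H_\nu\) by \eqref{eq:root-divisor-pullback}, and \(\mathscr H_\nu\) occurs in at most one of \(\mathscr Z,\mathscr P\).
\item Components where \(w\equiv0\) correspond to components where \(w_{\mathscr Y}\equiv0\), and the convention is respected there.
\end{itemize}

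\emph{Step 2: the NC rational pair axioms.}
\begin{itemize}
\item Axioms (i)–(iii) follow from \cref{prop:boundary-root-basic-properties}(ii),(iii) and Step 1.
\item For polar nondegeneracy (iv), take the Chen–Yu normal form \eqref{eq:chen-yu-nondegenerate-local-form} near a point of \(|\mathscr P|\).
\item If \(\mathscr H=(x_1=0)\), the root chart substitutes \(x_1=t^r\). This gives \(w_{\mathscr Y}=1/(t^{re_1}x_2^{e_2}\cdots)\) or \(z_1/(t^{re_1}\cdots)\) with \(\mathscr D_{\mathscr Y}=(t x_2\cdots y_m=0)\), which is again of normal form. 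Here \(e_1=0\) is allowed when \(\mathscr H\) is a nonpolar boundary branch.
\item Descent along the \(\mu_r\)-quotient is automatic, because nondegeneracy is a condition on an étale chart of \(\mathscr Y\), and \(\Spec B\to\mathscr Y\) is such a chart.
\item Near points of \(\mathscr H\) not over \(|\mathscr P|\) there is nothing to check.
\end{itemize}

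\emph{Step 3: the rounding identity.}
\begin{itemize}
\item Write \(\mathscr P=\sum_\nu e_\nu\mathscr H_\nu+\mathscr P'\), where \(\mathscr P'\) has no component along \(\mathscr H\).
\item Then \(\mathscr P_{\mathscr Y}=\sum_\nu re_\nu\mathscr H_{\mathscr Y,\nu}+\pi^*\mathscr P'\). Here \(\pi^*\mathscr P'\) has the same (reduced-multiplicity-one) coefficients along the strict transforms, so \(\lfloor\alpha\pi^*\mathscr P'\rfloor=\pi^*\lfloor\alpha\mathscr P'\rfloor\).
\item Along \(\mathscr H_{\mathscr Y,\nu}\), the left side of \eqref{eq:boundary-root-rounded-divisor} has coefficient \(\lfloor\alpha re_\nu\rfloor\), while \(\pi^*\lfloor\alpha\mathscr P\rfloor\) has coefficient \(r\lfloor\alpha e_\nu\rfloor\).
\item The difference is \(\delta_{\alpha,\nu}\). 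Writing \(\alpha e_\nu=\lfloor\alpha e_\nu\rfloor+\{\alpha e_\nu\}\) gives \(\delta_{\alpha,\nu}=\lfloor r\{\alpha e_\nu\}\rfloor\in[0,r-1]\).
\item The line-bundle form then follows by subtracting \(\mathscr P_{\mathscr Y}=\pi^*\mathscr P\) and using \(\mathcal O(\pi^*E)=\pi^*\mathcal O(E)\).
\end{itemize}

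The main point needing care is Step 1's claim that \(\pi^*\) of the reduced polar components has no hidden common components and multiplicity exactly as stated. This is especially delicate when \(\mathscr H\) is disconnected or is not itself a polar component (\(e_\nu=0\), giving \(\delta=0\)). I would handle it uniformly by the coordinate computation on the root chart.
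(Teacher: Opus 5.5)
Your proposal is correct and follows essentially the same route as the paper. You pull back the principal divisor and use the multiplicity-\(r\) pullback of \(\mathscr H\) to exclude common zero--pole components. You check polar nondegeneracy by substituting \(x_1=t^r\) (or a nonpolar \(y_j=t^r\)) into the Chen--Yu normal form on the root chart. You then compare floor coefficients along \(\mathscr H_{\mathscr Y,\nu}\) with those along the multiplicity-one pullbacks of the other polar components. Your remarks on the flatness of \(\pi\) and the \'etaleness of \(\Spec B\to\mathscr Y\) only make explicit what the paper leaves implicit.
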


\begin{proof}
On every component where \(w\) is not identically zero, pullback of
its principal divisor gives
\eqref{eq:boundary-root-rational-divisor-pullback}.  The stated
convention handles the remaining components.  The two
effective pullbacks have no common prime component.  Thus they are
the actual zero and pole divisors of \(w_{\mathscr Y}\).

It remains to check polar nondegeneracy.  This is \'etale local and
is unchanged away from \(\mathscr H\).  At a point of
\(|\mathscr P|\cap\mathscr H\), choose coordinates as in
\eqref{eq:chen-yu-nondegenerate-local-form}; this is the local form
of \cite[\S2.1, formula~(2), pp.~2--3]{ChenYu}.  If
\(\mathscr H=(x_i=0)\) is polar, the root chart replaces
\(x_i\) by \(t^r\), so the polar exponent \(e_i\) becomes
\(re_i\).  If \(\mathscr H=(y_j=0)\) is nonpolar, the same chart
replaces \(y_j\) by \(t^r\) and leaves the displayed expression for
\(w\) unchanged.  In both cases the numerator, when present, remains
a transverse parameter.  The reduced boundary remains NC by
\cref{prop:boundary-root-basic-properties}\textup{(iii)}.  Hence the
transformed data form an NC rational stack pair.

By \eqref{eq:root-divisor-pullback}, the coefficient of
\(\mathscr P_{\mathscr Y}\) along
\(\mathscr H_{\mathscr Y,\nu}\) is \(re_\nu\).  Therefore the
coefficient of the difference in
\eqref{eq:boundary-root-rounded-divisor} is
\[
 \lfloor\alpha r e_\nu\rfloor
 -r\lfloor\alpha e_\nu\rfloor
 =\lfloor r\{\alpha e_\nu\}\rfloor.
\]
It lies between \(0\) and \(r-1\).  Every polar component different
from \(\mathscr H\) has pullback multiplicity one at its generic
point by
\cref{prop:boundary-root-basic-properties}\textup{(iii)}.  This proves
\eqref{eq:boundary-root-rounded-divisor}.  The line-bundle identity
follows because
\(\mathscr P_{\mathscr Y}=\pi^*\mathscr P\).
\end{proof}

\begin{theorem}[Boundary-root invariance]
\label{thm:boundary-root-filtered-invariance}
Let
\((\mathscr X,\mathscr D,\mathscr Z,\mathscr P,w)\) be an NC
rational stack pair, and let \(\mathscr H\) be a smooth effective
Cartier component of \(\mathscr D\).  Put
\[
 \pi\colon\mathscr Y=\sqrt[r]{(\mathscr X,\mathscr H)}
 \longrightarrow\mathscr X,\qquad
 \mathscr D_{\mathscr Y}:=(\pi^{-1}\mathscr D)_{\mathrm{red}},
 \qquad w_{\mathscr Y}:=\pi^*w,
\]
and let
\[
 \mathscr Z_{\mathscr Y}:=\pi^*\mathscr Z,
 \qquad
 \mathscr P_{\mathscr Y}:=\pi^*\mathscr P.
\]
Then
\[
 (\mathscr Y,\mathscr D_{\mathscr Y},
  \mathscr Z_{\mathscr Y},\mathscr P_{\mathscr Y},w_{\mathscr Y})
\]
is an NC rational stack pair.  The induced map
\[
 \mathscr U_{\mathscr Y}
 \xrightarrow{\ \sim\ }
 \mathscr U_{\mathscr X}
\]
is the identity under the natural identification, \(\pi\) is an NC
rational stack-pair modification, and
\begin{align}
 \mathcal O_{\mathscr X}
 &\xrightarrow{\ \sim\ }
 R\pi_*\mathcal O_{\mathscr Y}
 \label{eq:boundary-root-structure-sheaf-descent}\\
 \Omega_{\mathscr X,w}^a(\alpha)
 &\xrightarrow{\sim}
 R\pi_*\Omega_{\mathscr Y,w_{\mathscr Y}}^a(\alpha),
 \label{eq:boundary-root-kontsevich-termwise-descent}\\
 \Phi_{\pi,\lambda}\colon
 \Firr^\lambda\mathcal K_{\mathscr X,w}^\bullet
 &\xrightarrow{\sim}
 R\pi_*\Firr^\lambda
 \mathcal K_{\mathscr Y,w_{\mathscr Y}}^\bullet
 \label{eq:boundary-root-levelwise-isomorphism}
\end{align}
for every \(a\geq0\), \(\alpha\in\Q\cap[0,1)\), and
\(\lambda\in\mathbb Q\).  For each fixed \(\alpha\), they
also identify the coherent integer-filtered models, their
integer-slice quotients, and their spectral sequences from \(E_1\),
as in \cref{prop:acyclic-defect-descent-criterion}.

If \(\mathscr X\) is proper, then \(\mathscr Y\) is proper and
compactifies the same open LG model.  Morphicity of \(w\) and
projectivity of the coarse moduli space are likewise preserved.  If
\(\mathscr D\) has a finite labeled SNC presentation and
\(\mathscr H\) is one of its labeled members, the rooted boundary
inherits such a presentation.  In particular, a boundary root of a
good stack compactification is again good, and a boundary root of a
projective good stack compactification is again projective good.
\end{theorem}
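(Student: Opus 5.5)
The plan is to reduce everything to \cref{prop:acyclic-defect-descent-criterion}.

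\emph{Step 1: formal properties.} The pair structure on \(\mathscr Y\) and the formula \(\mathscr P_{\mathscr Y}=\pi^*\mathscr P\) are \cref{lem:boundary-root-pullback-rounding}. By \cref{prop:boundary-root-basic-properties}\textup{(ii)}, \(\pi\) is proper and an isomorphism over \(\mathscr X\setminus\mathscr H\supseteq\mathscr U_{\mathscr X}\), so it is an NC rational stack-pair modification. The structure-sheaf descent \eqref{eq:boundary-root-structure-sheaf-descent} is also part of \textup{(ii)}: \(\pi_*\) is exact on quasi-coherent sheaves, and \(\pi_*\mathcal O_{\mathscr Y}=\mathcal O_{\mathscr X}\).

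\emph{Step 2: the defect vanishes.} It remains to show \(R\pi_*Q^a_\alpha=0\). Since \(R^{>0}\pi_*\) vanishes, this amounts to \(\pi_*Q^a_\alpha=0\). By exactness of \(\pi_*\) and the projection formula, it is equivalent to show that
\[
 \Omega^a_{\mathscr X,w}(\alpha)\longrightarrow\pi_*\Omega^a_{\mathscr Y,w_{\mathscr Y}}(\alpha)
\]
is surjective; injectivity is already known. This is étale local, so we work on the chart \([\Spec B/\mu_r]\) with \(x_1=h=t^r\), where \(\pi_*\) means taking \(\mu_r\)-invariants.

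Logarithmic pullback gives \(\pi^*\Omega^1_{\mathscr X}(\log\mathscr D)\simeq\Omega^1_{\mathscr Y}(\log\mathscr D_{\mathscr Y})\), because \(dx_1/x_1=r\,dt/t\). Near the polar locus, the generator \(\vartheta=x^e dw\) also pulls back to the corresponding generator upstairs, and the complement \(M\) can be taken compatibly. By the explicit basis in the proof of \cref{prop:local-kontsevich-yu-package}\textup{(i)} and \eqref{eq:kontsevich-lattice-fractional-twist}, together with \eqref{eq:boundary-root-rounded-divisor}, we obtain
\[
 \Omega^a_{\mathscr Y,w_{\mathscr Y}}(\alpha)\simeq\pi^*\Omega^a_{\mathscr X,w}(\alpha)\otimes\mathcal O_{\mathscr Y}(\Delta_\alpha).
\]
Because \(\pi^*\Omega^a_{\mathscr X,w}(\alpha)\) is locally free and pulled back from \(A\), with trivial \(\mu_r\)-character on its log frame, the invariants reduce to computing \((t^{-\delta}B)^{\mu_r}\) for \(0\le\delta\le r-1\). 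As a \(\mu_r\)-graded module, \(t^{-\delta}B=\bigoplus_{j=0}^{r-1}t^{j-\delta}A\). Its invariant part is the summand with \(j\equiv\delta\pmod r\), which, since \(0\le j,\delta\le r-1\), is \(j=\delta\), namely \(t^0A=A\). So the invariants equal \(A\), which is exactly the image of \(\Omega^a_{\mathscr X,w}(\alpha)\). This is the "floor identity plus nontrivial-character vanishing".

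The main obstacle is verifying that the lattice frame really pulls back to a frame with trivial character, so the twist is the only source of characters. The first thing to check is that \(\vartheta_{\mathscr Y}=t^{r e_1}\cdots dw=\pi^*\vartheta\) holds in both normal forms. This should follow because \(\pi^*x^e=x_{\mathscr Y}^{e_{\mathscr Y}}\), which is \(\mu_r\)-invariant.

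\emph{Step 3: conclusion and preservation statements.} \cref{prop:acyclic-defect-descent-criterion} then yields \eqref{eq:boundary-root-kontsevich-termwise-descent}, \eqref{eq:boundary-root-levelwise-isomorphism}, and the filtered, spectral-sequence and image statements. For the final assertions:

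Properness is preserved because \(\pi\) is proper.

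Morphicity is preserved because \(\pi^*\) of a morphism to \(\mathbb P^1\) is a morphism.

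The coarse space is unchanged, since \(\pi\) is relatively coarse, so projectivity is preserved.

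The labeled SNC presentation is inherited by \cref{prop:boundary-root-basic-properties}\textup{(iii)}. Hence good and projective good compactifications are preserved.
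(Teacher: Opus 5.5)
Your proposal is correct and follows the paper's proof essentially step for step. It uses the basic root-stack properties and the rounding lemma to obtain \(\Omega^a_{\mathscr Y,w_{\mathscr Y}}(\alpha)\simeq\pi^*\Omega^a_{\mathscr X,w}(\alpha)\otimes\mathcal O_{\mathscr Y}(\Delta_\alpha)\), kills the defect by exactness of \(\mu_r\)-invariants, and concludes with the acyclic-defect criterion. There are only two cosmetic differences: you compute \((t^{-\delta}B)^{\mu_r}=A\) in one step, where the paper filters the cokernel by the pieces \(t^{-j}\), \(1\leq j\leq\delta\), each of nontrivial character; and the paper gets \(\pi^*\Omega^a_{\mathscr X,w}(0)\simeq\Omega^a_{\mathscr Y,w_{\mathscr Y}}(0)\) intrinsically from the zero-level formula, so the trivial-character ``obstacle'' you flag is automatic for any module pulled back from \(A\).
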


\begin{proof}
By \cref{prop:boundary-root-basic-properties},
the stack \(\mathscr Y\) is smooth, the divisor
\(\mathscr D_{\mathscr Y}\) is reduced NC, the morphism \(\pi\) is
proper and an isomorphism away from \(\mathscr H\), and
\eqref{eq:boundary-root-structure-sheaf-descent} holds.  Since
\(\mathscr H\subset\mathscr D\), removing the reduced inverse image
of the boundary gives
\(\mathscr U_{\mathscr Y}\simeq\mathscr U_{\mathscr X}\), with
\(\pi\) equal to the identity under this identification.  The
transformed data form an NC rational stack pair by
\cref{lem:boundary-root-pullback-rounding}.  Thus \(\pi\) is an NC
rational stack-pair modification.  Kontsevich pullback is provided
by \cref{prop:stack-kontsevich-functoriality}.

Fix \(\alpha\in\Q\cap[0,1)\), and use the divisor
\(\Delta_\alpha\) of
\cref{lem:boundary-root-pullback-rounding}.  Logarithmic
differentials pull back isomorphically
\cite[Lemma~3.5]{BorneLaaroussi}; locally this is
\(dx/x\mapsto r\,dt/t\).  Since
\(\mathscr P_{\mathscr Y}=\pi^*\mathscr P\) and
\(dw_{\mathscr Y}=\pi^*dw\), the zero-level formula in
\cref{prop:local-kontsevich-yu-package}\textup{(i)} gives
\begin{equation*}
 \pi^*\Omega_{\mathscr X,w}^a(0)
 \simeq\Omega_{\mathscr Y,w_{\mathscr Y}}^a(0).
\end{equation*}
The fractional-twist formula and
\eqref{eq:boundary-root-rounded-divisor} then give
\begin{equation*}
 \Omega_{\mathscr Y,w_{\mathscr Y}}^a(\alpha)
 \simeq
 \pi^*\Omega_{\mathscr X,w}^a(\alpha)
 \otimes\mathcal O_{\mathscr Y}(\Delta_\alpha).
\end{equation*}

Let \(Q_\alpha^a\) be the cokernel of Kontsevich pullback.  Under the
last identification, pullback is the natural inclusion into the
effective twist.  Filter this cokernel by adding the components of
\(\Delta_\alpha\) one at a time.  On the root chart,
the quotient arising from the \(j\)-th twist along
\(\mathscr H_{\mathscr Y,\nu}\) is generated by \(t^{-j}\); its
coefficient is pulled back from \(\mathscr X\), hence has trivial
relative stabilizer action, while \(t^{-j}\) has character
\(\chi^{-j}\).  This character is nontrivial because
\(1\leq j\leq\delta_{\alpha,\nu}\leq r-1\).  Exact
\(\mu_r\)-invariants therefore kill every quotient, and
\cref{prop:boundary-root-basic-properties}\textup{(ii)} plus
d\'evissage gives
\[
 R\pi_*Q_\alpha^a=0.
\]
The acyclic-defect criterion
\cref{prop:acyclic-defect-descent-criterion} now yields
\eqref{eq:boundary-root-kontsevich-termwise-descent},
\eqref{eq:boundary-root-levelwise-isomorphism}, and all their stated
filtered consequences.

Finally, properness is preserved by composition, morphicity by
composition with \(\pi\), and a labeled SNC boundary by
\cref{prop:boundary-root-basic-properties}\textup{(iii)}.  The relative coarse-space
description in
\cref{prop:boundary-root-basic-properties}\textup{(ii)} shows that the
coarse moduli space is unchanged, so coarse projectivity is preserved
as well.  This agrees with the root transform and relative coarse
pair of \cite[\S3.5, p.~10]{BerghRydh}, and proves the last paragraph.
\end{proof}

\subsection{Boundary-admissible ordinary blowups and their filtered comparison}
\label{subsec:ordinary-blowups-filtered-comparison}

For the main theorem, one could avoid a separate stack proof of the
blowup comparison.  The required blowups are covered by two existing
arguments.  Yu treats the boundary blowups used to compare good
compactifications \cite[Lemma~1.6]{Yu}.  Chen--Yu treat the
zero--pole blowups used to resolve nondegenerate rational
compactifications \cite[\S2.3]{ChenYu}.  These results are stated for
smooth varieties in a global setting that includes properness
hypotheses.  Their blowup calculations, however, are étale-local on
the base and do not use properness.  One may therefore apply them on
étale scheme charts and descend the comparisons to the stack.

We give the details because one slightly more general statement
covers all the blowups used here.  A boundary-admissible center may
contain both polar directions and extra transverse equations.  Thus
Yu's noncancelling blowups, the cancelling zero--pole blowups of
Chen--Yu, and the blowups used for boundary strictification are all
special cases of the theorem.  It also covers more general
boundary-admissible blowups.  The main idea of the proof is inspired
by the two arguments above.

\begin{lemma}\label{lem:smooth-stack-blowup-geometry}
Let \(\mathscr X\) be a smooth Deligne--Mumford stack, let
\(C\hookrightarrow\mathscr X\) be a smooth regular closed immersion
of positive locally constant codimension, and put
\[
 \pi\colon\mathscr X':=\Bl_C(\mathscr X)\longrightarrow\mathscr X.
\]
Let \(E\) be the exceptional divisor.  Then the following hold.

\begin{enumerate}[label=\textup{(\roman*)}]
\item The morphism \(\pi\) is representable and projective,
  \(\mathscr X'\) is smooth, and \(\pi\) is an isomorphism over
  \(\mathscr X\setminus C\).  Moreover,
  \[
   E=\mathbb P(N_{C/\mathscr X})\xrightarrow{\,p\,}C,
   \qquad
   \mathcal O_E(E)=\mathcal O_{E/C}(-1).
  \]
  Adjunction is a
  quasi-isomorphism
  \begin{equation}
   \mathcal O_{\mathscr X}\xrightarrow{\ \sim\ }
   R\pi_*\mathcal O_{\mathscr X'}.
   \label{eq:smooth-stack-blowup-structure-sheaf-descent}
  \end{equation}
  If \(\mathscr X\) is proper, then \(\mathscr X'\) is proper.

\item Let \(\mathscr D\subset\mathscr X\) be an NC divisor having
  normal crossings with \(C\).  Thus, at every geometric point of
  \(C\), there is an étale scheme chart with regular parameters such
  that
  \begin{equation}
   \mathscr D=(x_1\cdots x_r=0),
   \qquad
   C=(x_1,\ldots,x_a,x_{r+1},\ldots,x_b),
   \qquad
   0\leq a\leq r\leq b,
   \label{eq:smooth-stack-blowup-nc-coordinates}
  \end{equation}
  where \(a+b-r>0\).  Put
  \(\mathscr D':=(\pi^{-1}\mathscr D)_{\mathrm{red}}\).
  \begin{enumerate}[label=\textup{(\alph*)}]
  \item The divisor \(\mathscr D'\) is NC.  If \(\mathscr D\) has
    a finite labeled SNC presentation, the nonempty strict
    transforms of its labeled members, together with \(E\), have a
    finite labeled SNC presentation.

  \item Let \(A\) be a smooth effective Cartier divisor having
    simultaneous normal-crossing coordinates with \(C\) and
    \(\mathscr D\).  If \(A'\) is its strict transform and
    \(C=\coprod_\nu C_\nu\), with exceptional divisor \(E_\nu\) over
    \(C_\nu\), then
    \begin{equation}
     \pi^*A
     =A'+\sum_\nu\epsilon_{C_\nu}(A)E_\nu,
     \qquad
     \epsilon_{C_\nu}(A)
     =\begin{cases}
       1,&C_\nu\subset A,\\
       0,&C_\nu\not\subset A.
      \end{cases}
     \label{eq:smooth-divisor-total-transform}
    \end{equation}

  \item Suppose that \(C\) is connected and is a connected component
    of the transverse intersection of two smooth effective Cartier
    divisors \(A\) and \(B\).  Then their strict transforms are
    disjoint above \(C\), and
    \begin{equation*}
     A'\cap E\simeq C\simeq B'\cap E.
    \end{equation*}
    If \(K\) is a further smooth Cartier divisor in simultaneous
    normal-crossing position and \(C\not\subset K\), then
    \begin{equation*}
     A'\cap K'\xrightarrow{\ \sim\ }A\cap K.
    \end{equation*}

  \item Assume in addition that \(C\subset|\mathscr D|\).  Define
    the transverse logarithmic conormal bundle by
    \begin{equation}
     \mathcal T_{C/(\mathscr X,\mathscr D)}^\vee
     :=\operatorname{im}\left(
      \mathcal I_C/\mathcal I_C^2
      \xrightarrow{\,d\,}
      \Omega_{\mathscr X}^1(\log\mathscr D)|_C
     \right).
     \label{eq:intrinsic-logarithmic-conormal-bundle}
    \end{equation}
    It is locally free; write
    \(\mathcal T_{C/(\mathscr X,\mathscr D)}\) for its dual.  If
    \(i\colon E\hookrightarrow\mathscr X'\) is the inclusion, then
    there is a canonical exact sequence
    \begin{equation}
     0\longrightarrow
     \pi^*\Omega_{\mathscr X}^1(\log\mathscr D)
     \longrightarrow
     \Omega_{\mathscr X'}^1(\log\mathscr D')
     \longrightarrow
     i_*\left(
      \mathcal O_{E/C}(-1)\otimes
      p^*\mathcal T_{C/(\mathscr X,\mathscr D)}^\vee
     \right)
     \longrightarrow0.
     \label{eq:intrinsic-logarithmic-blowup-modification}
    \end{equation}
  \end{enumerate}

\item Let \(w\) be a nonzero rational function whose divisor has
  normal crossings with \(C\) near \(C\).  For a connected
  component \(C_\nu\), let \(\nu_{C_\nu}(w)\) be the signed sum of
  the multiplicities of the local branches of \(\divisor(w)\)
  containing \(C_\nu\).  Then
  \begin{equation}
   \divisor(w\circ\pi)
   =\widetilde{\divisor(w)}
    +\sum_\nu\nu_{C_\nu}(w)E_\nu.
   \label{eq:rational-function-divisor-transform}
  \end{equation}
  Near \(E\), the support of this divisor is NC.
\end{enumerate}
All the constructions and exact sequences above commute with étale
base change.
\end{lemma}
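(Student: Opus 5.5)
The plan is to reduce every assertion to étale scheme charts, where the statements are classical facts about blowups of smooth varieties along smooth centers, and then descend. Blowing up commutes with flat base change: for an étale chart \(V\to\mathscr X\), \(\Bl_C(\mathscr X)\times_{\mathscr X}V\simeq\Bl_{C\times_{\mathscr X}V}(V)\). This gives representability at once, since the fibre over a scheme is a scheme. It also gives the identification \(E=\mathbb P(N_{C/\mathscr X})\) with \(\mathcal O_E(E)=\mathcal O_{E/C}(-1)\), by descent from the scheme case. Projectivity of \(\pi\) follows from \(\mathscr X'=\underline{\mathrm{Proj}}_{\mathscr X}\bigoplus_m\mathcal I_C^m\), with \(\mathcal O(1)=\mathcal O(-E)\) relatively ample. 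Smoothness of \(\mathscr X'\) and the isomorphism over \(\mathscr X\setminus C\) are étale local. For \eqref{eq:smooth-stack-blowup-structure-sheaf-descent}, flat base change identifies \(R\pi_*\mathcal O_{\mathscr X'}\) on a chart with the scheme pushforward. The scheme statement is the standard fact that blowups of smooth varieties along smooth centers satisfy \(R\pi_*\mathcal O=\mathcal O\), because the fibres are projective spaces; one can check \(R^q\pi_*\mathcal O_{\mathscr X'}(-mE)\) by induction on \(m\) using \(\mathcal O_E(-mE)=\mathcal O_{E/C}(m)\). Properness then follows because \(\pi\) is projective and composites of proper morphisms are proper.

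For (ii) I will work in the coordinates \eqref{eq:smooth-stack-blowup-nc-coordinates}, which exist by hypothesis, and cover the blowup by the standard affine charts. On the chart where \(x_k\) generates the exceptional ideal, with \(k\) among the center equations, we have \(x_i=x_kx_i'\) for the other center coordinates. Then \(\mathscr D'\) is cut out by \(x_k\) times the remaining boundary coordinates. This is visibly NC, which gives (a). For the labeled SNC assertion I will check that each strict transform is smooth and that all intersections have the expected codimension chartwise. Connectedness is part of the data, and I will split \(E\) into its components \(E_\nu\). Formula \eqref{eq:smooth-divisor-total-transform} follows because a coordinate hyperplane \(A=(x_i=0)\) pulls back to \(x_kx_i'\) when \(C_\nu\subset A\), and to \(x_i\) otherwise. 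For (c), in coordinates where \(A=(x_1=0)\), \(B=(x_2=0)\) and \(C=(x_1,x_2)\), the strict transforms \(x_1'\) and \(x_2'\) live on complementary charts. Each meets \(E\) in a section isomorphic to \(C\). The statement about \(K\) holds because \(\pi\) is an isomorphism away from \(C\) and \(C\cap A\cap K\) meets neither strict transform badly; I will verify this chartwise. Part (iii) is the valuation computation: \(\operatorname{ord}_{E_\nu}\) of a local branch equation is \(1\) exactly when the branch contains \(C_\nu\). Writing \(w\) as a unit times a product of powers of branch coordinates, as the NC hypothesis allows, gives \eqref{eq:rational-function-divisor-transform}. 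The same charts show that the support is NC near \(E\).

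I expect the main obstacle to be (d), the logarithmic sequence \eqref{eq:intrinsic-logarithmic-blowup-modification}. First, \(\mathcal T^\vee_{C/(\mathscr X,\mathscr D)}\) is locally free, with basis \(dx_{r+1},\ldots,dx_b\). The reason is that \(d\) sends \(x_i\) to \(x_i\,dx_i/x_i\), which vanishes on \(C\) for the boundary coordinates \(i\le a\); this uses \(C\subset|\mathscr D|\), so that \(a\ge1\). On the chart where \(x_k\) generates: if \(k\le a\), then \(dx_i/x_i\) pulls back to \(dx_k/x_k+dx_i'/x_i'\) and \(dx_j=x_k\,dx_j'+x_j'\,dx_k\); if \(k>r\), the boundary of \(\mathscr D'\) includes \(x_k\), and the same formulas apply. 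The cokernel of \(\pi^*\Omega^1(\log\mathscr D)\to\Omega^1(\log\mathscr D')\) is then generated by the classes of \(dx_j'\) for \(r<j\le b\), \(j\ne k\), together with a class coming from the log direction of \(E\). This cokernel is killed by \(x_k\), so it is supported on \(E\).

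Matching it intrinsically with \(\mathcal O_{E/C}(-1)\otimes p^*\mathcal T^\vee\) is the delicate point. I would construct the map \(\pi^*\mathcal T^\vee\to\Omega^1(\log\mathscr D')|_E\otimes\mathcal O(-E)\) by \(df\mapsto\pi^*df/(\text{local generator of }E)\), and check chartwise that it induces the stated isomorphism onto the cokernel. This map is canonical, so it is independent of coordinates and descends along étale charts. Injectivity of the first arrow holds because it is generically an isomorphism between locally free sheaves. Finally, compatibility with étale base change is automatic, since every construction above is intrinsic and commutes with flat pullback.
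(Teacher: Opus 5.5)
Your proposal is correct. It uses the same overall reduction as the paper, namely standard affine blowup charts on \'etale scheme charts followed by descent, but two steps go by a different route.

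For \(\mathcal O_{\mathscr X}\simeq R\pi_*\mathcal O_{\mathscr X'}\), the paper computes the augmented \v Cech complex of the standard affine charts. Your twisting argument works as a descending induction from \(m\gg0\), starting from relative Serre vanishing for the \(\pi\)-ample \(\mathcal O(-E)\) and using \(R^qp_*\mathcal O_{E/C}(m)=0\) for \(q>0\), \(m\ge0\). You also need normality of \(\mathscr X\) to get \(\pi_*\mathcal O_{\mathscr X'}=\mathcal O_{\mathscr X}\).

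For (ii)(d), the paper reads off the tangent sequence \(0\to\mathcal T_{\mathscr X'}(-\log\mathscr D')\to\pi^*\mathcal T_{\mathscr X}(-\log\mathscr D)\to i_*p^*\mathcal T_{C/(\mathscr X,\mathscr D)}\to0\) from the charts and then dualizes. It uses \(\mathcal Hom(i_*\mathcal F,\mathcal O)=0\) and \(\mathcal Ext^1(i_*\mathcal F,\mathcal O)\simeq i_*(\mathcal F^\vee\otimes\mathcal O_E(E))\), so the twist \(\mathcal O_E(E)=\mathcal O_{E/C}(-1)\) comes out of duality. You instead build the cotangent-side arrow directly by dividing \(\pi^*df\) by an equation \(t\) of \(E\). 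This has the advantage of exhibiting the canonical map that the paper leaves implicit.

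That construction needs one correction. With target \(\Omega^1_{\mathscr X'}(\log\mathscr D')|_E\otimes\mathcal O(-E)\), the rule \(df\mapsto\pi^*df/t\) is neither \(\mathcal O\)-linear nor well defined on \(\mathcal T^\vee\).
\begin{itemize}
\item Take \(f=x_{r+1}\) and a non-center coordinate \(h=x_n\). In \(\pi^*d(hf)\), the term \(\pi^*f\,\pi^*dh\) contributes \(dx_n\ne0\) on \(E\), which breaks linearity.
\item The section \(dx_1=x_1\,d\log x_1\) is zero in \(\mathcal T^\vee\). On a chart with \(k>r\) it goes to \(x_1'(d\log x_k+d\log x_1')\ne0\), which breaks well-definedness.
\end{itemize}
Both defects lie in \(\mathcal O(-E)\cdot\pi^*\Omega^1_{\mathscr X}(\log\mathscr D)\). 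So take the target to be \(Q\otimes\mathcal O_E(-E)\), where \(Q\) is the cokernel of \(\pi^*\Omega^1_{\mathscr X}(\log\mathscr D)\to\Omega^1_{\mathscr X'}(\log\mathscr D')\). The map is then \(\mathcal O\)-linear, kills \(\mathcal I_C^2\), and factors through \(\mathcal T^\vee\). Your chart computation finishes the job: the images are \(dx_j'\) when \(k\le a\), and \(d\log x_k\), \(dx_j'+x_j'\,d\log x_k\) when \(k>r\). This shows \(\mathcal O_E(E)\otimes p^*\mathcal T^\vee\simeq Q\). On a chart with \(k\le a\) no separate logarithmic generator of \(Q\) is needed, since \(d\log x_k\) already lies in the image.

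The \(K\)-clause of (ii)(c), which you leave as ``verify chartwise'', is cleanest as in the paper. Since \(C\) is Cartier in \(A\), the map \(A'\to A\) is an isomorphism. Since \(C\not\subset K\), part (b) gives \(K'=\pi^*K\). Hence \(A'\cap K'=A'\times_{\mathscr X}K\simeq A\cap K\).
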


\begin{proof}
Blowup is the relative Proj of the Rees algebra, so it is
representable, projective, and compatible with flat base change.  On
an étale chart on which the center is a coordinate subspace, the
standard blowup charts prove smoothness and the description of the
exceptional divisor.  They also show that the reduced total transform
of a coordinate NC divisor is again coordinate NC.  Formula
\eqref{eq:smooth-divisor-total-transform} is the order of a local
equation along the ideal of the center.

The structure-sheaf assertion is likewise \'etale-local on
\(\mathscr X\).  On a scheme chart it is the standard relative-Proj
calculation for the blowup of an ideal generated by a regular
sequence: the augmented \v Cech complex of the standard affine
blowup charts has degree-zero cohomology \(\mathcal O_{\mathscr X}\)
and no higher cohomology.  Proper flat base change for the
representable morphism \(\pi\), followed by faithfully flat descent
from an \'etale atlas, proves
\eqref{eq:smooth-stack-blowup-structure-sheaf-descent}.

For \textup{(ii)(c)}, use coordinates
\(A=(u=0)\), \(B=(v=0)\), and \(C=(u=v=0)\).
The charts \(v=ut\) and \(u=vs\) show that \(A'\) and \(B'\) do
not meet above \(C\), while their intersections with \(E\) are the
two sections of \(E\to C\).  If \(K\) is present, the induced
center in \(A\cap K\) is Cartier, so its blowup is an isomorphism.

For \textup{(ii)(d)}, the claim is trivial when the center is a
Cartier divisor, since its blowup is the identity.  Assume that its
codimension is at least two.  In the coordinates
\eqref{eq:smooth-stack-blowup-nc-coordinates}, the image in
\eqref{eq:intrinsic-logarithmic-conormal-bundle} is generated by
\(dx_{r+1},\ldots,dx_b\).  It is therefore locally free.  The blowup
charts give the exact tangent sequence
\[
 0\longrightarrow
 \mathcal T_{\mathscr X'}(-\log\mathscr D')
 \longrightarrow
 \pi^*\mathcal T_{\mathscr X}(-\log\mathscr D)
 \longrightarrow
 i_*p^*\mathcal T_{C/(\mathscr X,\mathscr D)}
 \longrightarrow0.
\]
Only the transverse generators
\(\partial_{x_{r+1}},\ldots,\partial_{x_b}\) occur in the quotient.
Now dualize.  The quotient is supported on the Cartier divisor \(E\),
so its \(\mathcal Hom\) into \(\mathcal O_{\mathscr X'}\) is zero and
\[
 \mathcal Ext^1_{\mathscr X'}(i_*\mathcal F,
 \mathcal O_{\mathscr X'})
 \simeq
 i_*(\mathcal F^\vee\otimes\mathcal O_E(E)).
\]
There are no higher terms.  This gives
\eqref{eq:intrinsic-logarithmic-blowup-modification}.  It is the
intrinsic degree-one form of
\cite[Lemma~1.6(i)]{Yu}.  Finally, the coefficient of \(E_\nu\) in
the pullback of a principal divisor is the order of its local
monomial along \(C_\nu\), which proves
\eqref{eq:rational-function-divisor-transform}.  All arguments are
étale-local and hence descend to \(\mathscr X\).
\end{proof}

\begin{definition}[Boundary-admissible centers]
\label{def:boundary-admissible-center}
Let
\((\mathscr X,\mathscr D,\mathscr Z,\mathscr P,w)\)
be an NC rational stack pair.  A \emph{boundary-admissible center}
is a smooth closed substack \(C\subset\mathscr X\) satisfying the
following three conditions.
\begin{enumerate}[label=\textup{(\roman*)}]
\item The center is contained in the boundary,
  \(C\subset|\mathscr D|\), and has normal crossings with
  \(\mathscr D\).  Equivalently, étale-locally at every geometric
  point of \(C\), there are regular parameters and integers
  \(1\leq a\leq r\leq b\) such that
\begin{equation}
 \mathscr D=(x_1\cdots x_r=0),
 \qquad
 C=(x_1,\ldots,x_a,x_{r+1},\ldots,x_b).
 \label{eq:good-model-center-coordinates}
\end{equation}

\item Near \(C\cap|\mathscr P|\), the center has normal crossings
  with the reduced NC divisor \(\mathscr D+\mathscr Z\).

\item No connected component of \(C\) contained in the zero divisor
  is allowed to meet the polar divisor properly: for every connected
  component \(C_\nu\),
  \begin{equation}
   C_\nu\subset|\mathscr Z|,
   \quad C_\nu\cap|\mathscr P|\ne\varnothing
   \quad\Longrightarrow\quad
   C_\nu\subset|\mathscr P|.
   \label{eq:rationally-admissible-cancellation-condition}
  \end{equation}
\end{enumerate}
Thus a component contained in \(\mathscr Z\) either misses the polar
locus or is contained in
\(|\mathscr Z|\cap|\mathscr P|\); it cannot cross the polar locus
transversely.

At every geometric point of \(C\cap|\mathscr P|\),
conditions~\textup{(ii)}--\textup{(iii)} amount to requiring the
following simultaneous coordinate form.  After passing to an étale
chart, there are regular parameters
\[
 x_1,\ldots,x_r,\qquad y_1,\ldots,y_m,
\]
subsets
\(A\subset\{1,\ldots,r\}\) and
\(B\subset\{1,\ldots,m\}\), with \(A\ne\varnothing\), and a unit
\(u\) such that
\begin{align}
 \mathscr D&=(x_1\cdots x_r=0),
 &C&=(x_i\ (i\in A),\ y_j\ (j\in B)),
 \label{eq:rationally-admissible-boundary-coordinates}\\
 \mathscr P&=\sum_{i=1}^q e_i(x_i=0),
 &w&=\frac{u}{x_1^{e_1}\cdots x_q^{e_q}}
 \quad\text{or}\quad
 \frac{u y_1}{x_1^{e_1}\cdots x_q^{e_q}},
 \notag
\end{align}
where every \(e_i>0\).  In the second form,
\eqref{eq:rationally-admissible-cancellation-condition} says exactly
that
\[
 1\in B
 \quad\Longrightarrow\quad
 A\cap\{1,\ldots,q\}\ne\varnothing.
\]

For a connected component \(C_\nu\subset C\), define its
\emph{cancellation index} by
\begin{equation*}
 \varepsilon_\nu
 :=\begin{cases}
  1,&C_\nu\subset|\mathscr Z|
       \text{ and }C_\nu\subset|\mathscr P|,\\
  0,&\text{otherwise}.
 \end{cases}
\end{equation*}
The blowup of \(C\), equipped with
\(\mathscr D'=(\pi^{-1}\mathscr D)_{\mathrm{red}}\) and
\(w'=w\circ\pi\), is called a \emph{boundary-admissible blowup}.
For a morphic potential, conditions~\textup{(ii)}--\textup{(iii)}
are automatic once condition~\textup{(i)} holds, and all cancellation
indices are zero.
\end{definition}

\begin{remark}[The three cases used below]
Three special cases occur in the proof of the main theorem.  For
toroidal boundary strictification, \(C\) is a boundary stratum, so
\(b=r\) in \eqref{eq:good-model-center-coordinates} and
\(\varepsilon_\nu=0\).  For the zero--pole blowups of Chen--Yu
\cite[\S2.3]{ChenYu}, one has, after relabeling,
\(C=(x_1,y_1)\) in
\eqref{eq:rationally-admissible-boundary-coordinates} and
\(\varepsilon_\nu=1\).  For the blowups used in weak factorization
of good compactifications \cite[Lemma~1.6]{Yu}, the center may have
the full form \eqref{eq:good-model-center-coordinates}, including
transverse equations, but the potential is morphic and hence
\(\varepsilon_\nu=0\).  The rest of this subsection treats all three
cases by one comparison argument.
\end{remark}

\begin{lemma}\label{lem:rationally-admissible-blowup-preserves-nc-pair}
Let \(C\) be a boundary-admissible center and let
\[
 \pi\colon\mathscr X':=\Bl_C(\mathscr X)\longrightarrow\mathscr X.
\]
Write \(C=\coprod_\nu C_\nu\), let \(E_\nu\) be the exceptional
divisor over \(C_\nu\), and let
\[
 N_\nu:=\operatorname{coeff}_{E_\nu}(\pi^*\mathscr P),
 \qquad
 \delta_\nu:=\operatorname{coeff}_{E_\nu}(\pi^*\mathscr Z).
\]
Then \(\delta_\nu\geq0\), and
\(\delta_\nu\in\{0,1\}\) whenever \(N_\nu>0\).  In all cases,
\(\varepsilon_\nu=\min\{N_\nu,\delta_\nu\}\), and the zero and polar
divisors of \(w'=w\circ\pi\) are
\begin{align}
 \mathscr Z'
 &=\widetilde{\mathscr Z}
   +\sum_\nu(\delta_\nu-\varepsilon_\nu)E_\nu,
 \label{eq:rationally-admissible-zero-transform}\\
 \mathscr P'
 &=\widetilde{\mathscr P}
   +\sum_\nu(N_\nu-\varepsilon_\nu)E_\nu
  =\pi^*\mathscr P-\sum_\nu\varepsilon_\nu E_\nu.
 \label{eq:rationally-admissible-polar-transform}
\end{align}
The transformed data
\((\mathscr X',\mathscr D',\mathscr Z',\mathscr P',w')\)
form an NC rational stack pair, and \(\pi\) is an isomorphism on the
complement of the boundary.  In particular, an NC rational stack
compactification is transformed into another compactification of the
same open LG model.
\end{lemma}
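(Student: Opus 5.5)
The lemma is étale-local on \(\mathscr X\), except for the bookkeeping of connected components. Every object in it (strict transforms, total transforms, zero and pole divisors, the NC condition) commutes with étale base change by \cref{lem:smooth-stack-blowup-geometry}. I would therefore work on an étale scheme chart at a geometric point of \(C\) and descend at the end.

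\textbf{Away from the polar locus.} Near points of \(C\) not in \(|\mathscr P|\), the pole coefficient \(N_\nu\) is computed at the generic point of \(E_\nu\). If \(C_\nu\cap|\mathscr P|=\varnothing\), then \(N_\nu=0\) and \(\varepsilon_\nu=0\). The divisor \(\mathscr Z\) may be arbitrary there, so \(\delta_\nu\) is only known to be \(\geq0\). Since \(\min\{0,\delta_\nu\}=0\), the formulas \eqref{eq:rationally-admissible-zero-transform}--\eqref{eq:rationally-admissible-polar-transform} just say that \(\divisor(w')=\pi^*\mathscr Z-\pi^*\mathscr P\) with no cancellation. This holds because \(E_\nu\) lies over a region where \(w\) is regular.

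\textbf{Near the polar locus.} At a point of \(C\cap|\mathscr P|\), I use the simultaneous normal form \eqref{eq:rationally-admissible-boundary-coordinates}. By \eqref{eq:smooth-divisor-total-transform}, \(N_\nu=\sum_{i\in A,\,i\leq q}e_i\). Also \(\delta_\nu=1\) if \(w\) has the numerator \(y_1\) and \(1\in B\), and \(\delta_\nu=0\) otherwise. Hence \(\delta_\nu\in\{0,1\}\) when \(N_\nu>0\).

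Condition~\eqref{eq:rationally-admissible-cancellation-condition} gives the equivalence \(\delta_\nu=1\Rightarrow N_\nu>0\). Moreover, \(C_\nu\subset|\mathscr Z|\cap|\mathscr P|\) exactly when \(\delta_\nu=1\) and \(N_\nu>0\). Since both coefficients are integers and \(\delta_\nu\le1\), we get \(\varepsilon_\nu=\min\{N_\nu,\delta_\nu\}\). This is constant along connected components, because \(N_\nu,\delta_\nu\) are generic coefficients.

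Then \(\divisor(w')=\pi^*\mathscr Z-\pi^*\mathscr P\). Cancelling the common multiple \(\varepsilon_\nu E_\nu\) gives relatively prime effective parts. The strict transforms \(\widetilde{\mathscr Z}\) and \(\widetilde{\mathscr P}\) share no components, since \(\mathscr Z,\mathscr P\) do not. This yields the two displayed formulas, with the second equality following from \(\pi^*\mathscr P=\widetilde{\mathscr P}+\sum N_\nu E_\nu\).

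\textbf{The NC rational pair conditions.} Smoothness of \(\mathscr X'\) and the NC property of \(\mathscr D'\) follow from \cref{lem:smooth-stack-blowup-geometry}(i),(ii)(a). Since \(C\subset|\mathscr D|\), the map \(\pi\) is an isomorphism over \(\mathscr U_{\mathscr X}\). Also \(|\mathscr P'|\subset\pi^{-1}|\mathscr D|=|\mathscr D'|\).

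The main obstacle is polar nondegeneracy (iv) on \(\mathscr X'\). I need a neighbourhood of \(|\mathscr P'|\) on which \(\mathscr Z'\) is smooth or empty and \(\mathscr Z'+\mathscr D'\) is NC. Points of \(|\mathscr P'|\) lie over \(|\mathscr P|\), since \(E_\nu\subset|\mathscr P'|\) forces \(N_\nu>\varepsilon_\nu\geq 0\). So it suffices to check the standard affine blowup charts of the normal form.

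In a chart where an exceptional coordinate \(x_k\) (\(k\in A\)) or \(y_k\) (\(k\in B\)) generates, the other generators become \(x_k\cdot(\text{new coordinate})\), and \(w'\) is again a monomial over a unit, possibly times a transformed \(y_1\). There are two cases.
\begin{itemize}
\item[(a)] If \(1\notin B\), then \(y_1\) is untouched and \(\mathscr Z'=\widetilde{\mathscr Z}\) stays a coordinate hyperplane transverse to the new boundary.
\item[(b)] If \(1\in B\) and \(\varepsilon_\nu=1\), then in the \(y_1\)-chart \(w'\) has no numerator. In an \(x_k\)-chart with \(k\in A\), \(y_1=x_ky_1'\), and the factor \(x_k\) cancels against the pole. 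The exponent of \(x_k\) becomes \(N_\nu-1\), or the \(x_k\)-exponent sum with \(x_k\) now denoting \(E_\nu\), and the numerator \(y_1'\) is again a coordinate.
\end{itemize}
One subtlety is a chart where the exponent of \(E_\nu\) drops to \(0\), so \(E_\nu\) becomes nonpolar. That is harmless, because the normal form only needs the polar branches among the \(x\)'s. In every chart \(w'\) has the Chen--Yu form \eqref{eq:chen-yu-nondegenerate-local-form}, up to absorbing the unit by an étale root. This gives (iv) by the stated equivalence, and descent finishes the proof.
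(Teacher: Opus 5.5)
Your proposal is correct and is essentially the paper's proof: both work étale-locally in the simultaneous normal form of the boundary-admissible center, read off \(N_\nu\) and \(\delta_\nu\) from the total-transform formula, use the cancellation condition to get \(\varepsilon_\nu=\min\{N_\nu,\delta_\nu\}\) and the two divisor formulas, and obtain polar nondegeneracy because near \(|\mathscr P'|\) the new zero divisor is just the strict transform of \(\mathscr Z\). You check that last point explicitly in the blowup charts, where the paper simply asserts it; in your case (b) the \(y_k\)-charts with \(k\in B\setminus\{1\}\) behave exactly like your \(x_k\)-charts.
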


\begin{proof}
The claims are étale-local.  In the coordinates of
\cref{def:boundary-admissible-center}, the center is a coordinate
subspace of the reduced NC divisor \(\mathscr D+\mathscr Z\) near the
polar locus.  Its blowup is smooth and the reduced total transform is
again NC.  If \(N_\nu>0\), then \(C_\nu\subset|\mathscr P|\), so
polar nondegeneracy gives \(\delta_\nu\in\{0,1\}\).  If
\(N_\nu=0\), no exceptional polar component is created and
\(\delta_\nu\) may be larger away from the polar locus.  In either
case, cancelling the common exceptional order
\(\min\{N_\nu,\delta_\nu\}=\varepsilon_\nu\) gives
\eqref{eq:rationally-admissible-zero-transform} and
\eqref{eq:rationally-admissible-polar-transform}.

Condition
\eqref{eq:rationally-admissible-cancellation-condition} is precisely
what is needed when \(\delta_\nu>0\): if the center also meets the
polar locus, it is contained in \(|\mathscr P|\), whence
\(N_\nu>0\), \(\delta_\nu=1\), and the exceptional zero factor
cancels.  Hence near
\(|\mathscr P'|\), the transformed zero divisor is the strict
transform of \(\mathscr Z\); it is smooth and transverse to
\(\mathscr D'\).  This proves polar nondegeneracy.  Since the center
lies in \(|\mathscr D|\), the blowup is an isomorphism on the open
complement.  Properness is preserved because \(\pi\) is projective.
\end{proof}

\begin{lemma}\label{lem:kontsevich-blowup-exceptional-rounding}
In the situation above, fix a connected component \(C_\nu\).  Let
\(e_{\nu,1},\ldots,e_{\nu,k_\nu}\) be the coefficients of the local
polar branches containing \(C_\nu\); this étale-local multiset is
well defined up to permutation.  Put
\(N_\nu=\sum_j e_{\nu,j}\).  For
\(\alpha\in\Q\cap[0,1)\), define
\begin{equation*}
 \Delta_\nu(\alpha)
 :=\operatorname{coeff}_{E_\nu}\left(
  (\lfloor\alpha\mathscr P'\rfloor-\mathscr P')
  -\pi^*(\lfloor\alpha\mathscr P\rfloor-\mathscr P)
 \right).
\end{equation*}
Then
\begin{align}
 \Delta_\nu(\alpha)
 &=\lfloor\alpha(N_\nu-\varepsilon_\nu)\rfloor
   -\sum_{j=1}^{k_\nu}\lfloor\alpha e_{\nu,j}\rfloor
   +\varepsilon_\nu
 \notag\\
 &=\left\lfloor
   \sum_{j=1}^{k_\nu}\{\alpha e_{\nu,j}\}
   +(1-\alpha)\varepsilon_\nu
  \right\rfloor.
 \label{eq:unified-rounding-saturation-defect}
\end{align}
If \(c_\nu=\operatorname{codim}_{\mathscr X}C_\nu\), then
\begin{equation}
 0\leq\Delta_\nu(\alpha)
 \leq\max\{k_\nu+\varepsilon_\nu-1,0\}
 \leq c_\nu-1.
 \label{eq:unified-rounding-saturation-range}
\end{equation}
Equivalently, with
\[
 L_\alpha
 :=\mathcal O_{\mathscr X'}\bigl(
   \pi^*(\lfloor\alpha\mathscr P\rfloor-\mathscr P)
  \bigr),
 \qquad
 L'_\alpha
 :=\mathcal O_{\mathscr X'}
   (\lfloor\alpha\mathscr P'\rfloor-\mathscr P'),
\]
one has
\begin{equation*}
 L'_\alpha
 \simeq
 L_\alpha\left(\sum_\nu\Delta_\nu(\alpha)E_\nu\right).
\end{equation*}
\end{lemma}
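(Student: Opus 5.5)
The computation is purely a coefficient count along each exceptional component \(E_\nu\), so I would work étale-locally at a generic point of \(C_\nu\) in the coordinates of \cref{def:boundary-admissible-center}. First I would compute the coefficient of \(E_\nu\) in \(\pi^*\lfloor\alpha\mathscr P\rfloor\). Apply \eqref{eq:smooth-divisor-total-transform} branch by branch. A polar branch \((x_i=0)\) contributes to this coefficient exactly when \(C_\nu\subset(x_i=0)\), that is, when \(i\in A\cap\{1,\ldots,q\}\), and it contributes \(\lfloor\alpha e_i\rfloor\). These branches are the \(e_{\nu,j}\). This gives
\[
 \operatorname{coeff}_{E_\nu}\pi^*(\lfloor\alpha\mathscr P\rfloor-\mathscr P)
 =\sum_j\lfloor\alpha e_{\nu,j}\rfloor-N_\nu .
\]
On the other side, \cref{lem:rationally-admissible-blowup-preserves-nc-pair}, via \eqref{eq:rationally-admissible-polar-transform}, gives \(\operatorname{coeff}_{E_\nu}\mathscr P'=N_\nu-\varepsilon_\nu\). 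Hence
\[
 \operatorname{coeff}_{E_\nu}(\lfloor\alpha\mathscr P'\rfloor-\mathscr P')
 =\lfloor\alpha(N_\nu-\varepsilon_\nu)\rfloor-N_\nu+\varepsilon_\nu .
\]
Subtracting the two coefficients yields the first line of \eqref{eq:unified-rounding-saturation-defect}.

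For the second line, write \(\lfloor\alpha e_{\nu,j}\rfloor=\alpha e_{\nu,j}-\{\alpha e_{\nu,j}\}\). Since \(N_\nu=\sum_j e_{\nu,j}\), we have \(\alpha N_\nu-\sum_j\lfloor\alpha e_{\nu,j}\rfloor=\sum_j\{\alpha e_{\nu,j}\}\). The remaining integer terms can be moved inside the floor:
\[
 \lfloor\alpha N_\nu-\alpha\varepsilon_\nu\rfloor-\sum_j\lfloor\alpha e_{\nu,j}\rfloor+\varepsilon_\nu
 =\Bigl\lfloor\sum_j\{\alpha e_{\nu,j}\}+(1-\alpha)\varepsilon_\nu\Bigr\rfloor .
\]
This is a one-line rearrangement.

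For the bounds \eqref{eq:unified-rounding-saturation-range}, nonnegativity is immediate because the argument of the floor is nonnegative. For the upper bound, each \(\{\alpha e_{\nu,j}\}<1\), and \((1-\alpha)\varepsilon_\nu\leq\varepsilon_\nu\), with the second term either \(0\) or at most \(1\). Suppose \(k_\nu+\varepsilon_\nu\geq1\). The quantity inside the floor is then strictly less than \(k_\nu+\varepsilon_\nu\), except possibly when \(k_\nu=0\), \(\varepsilon_\nu=1\) and \(\alpha=0\). That case cannot occur, because \(\varepsilon_\nu=1\) forces \(C_\nu\subset|\mathscr P|\) and hence \(k_\nu\geq1\). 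So the floor is at most \(k_\nu+\varepsilon_\nu-1\). If \(k_\nu+\varepsilon_\nu=0\), the value is \(0\).

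The last inequality, \(k_\nu+\varepsilon_\nu\leq c_\nu\), is the step I regard as the main (mild) obstacle, and it is a geometric count. The \(k_\nu\) polar branches through \(C_\nu\) correspond to distinct indices in \(A\), so they contribute \(k_\nu\) independent equations of \(C_\nu\). When \(\varepsilon_\nu=1\), the center lies in \(|\mathscr Z|\), so in the second normal form the numerator \(y_1\) vanishes on \(C_\nu\), i.e. \(1\in B\). This supplies one further equation, independent of the \(x_i\). Thus \(c_\nu=|A|+|B|\geq k_\nu+\varepsilon_\nu\). Note also \(c_\nu\geq1\), which covers the \(\max\) with \(0\). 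One subtlety to check is that \(C_\nu\subset|\mathscr Z|\) near a polar point really forces \(y_1\) to be among the center's equations rather than \(C_\nu\) being merely contained in the zero locus some other way. This follows from the simultaneous normal form, in which \(\mathscr Z=(y_1=0)\) near \(|\mathscr P|\).

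Finally, the line-bundle identity follows by comparing the two divisors. Away from \(E\), \(\pi\) is an isomorphism and the transforms agree by \eqref{eq:rationally-admissible-polar-transform}, where strict transforms of polar components have the same coefficients. So the difference of divisors is \(\sum_\nu\Delta_\nu(\alpha)E_\nu\).
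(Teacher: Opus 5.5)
Your proposal is correct and follows the paper's proof essentially step for step. It uses the same coefficient count along \(E_\nu\) from the polar transform \(\mathscr P'=\pi^*\mathscr P-\sum_\nu\varepsilon_\nu E_\nu\), the same rearrangement into fractional parts, and the same conormal count giving \(k_\nu+\varepsilon_\nu\leq c_\nu\). You also explicitly rule out the case \(k_\nu=0,\ \varepsilon_\nu=1,\ \alpha=0\) via \(\varepsilon_\nu=1\Rightarrow C_\nu\subset|\mathscr P|\Rightarrow k_\nu\geq1\); the paper needs this fact but uses it only implicitly when it asserts that the floor's argument lies in \([0,k_\nu+\varepsilon_\nu)\).
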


\begin{proof}
By
\eqref{eq:rationally-admissible-polar-transform}, the exceptional
coefficient of \(\mathscr P'\) is
\(N_\nu-\varepsilon_\nu\), while
\[
 \operatorname{coeff}_{E_\nu}
 \pi^*\lfloor\alpha\mathscr P\rfloor
 =\sum_{j=1}^{k_\nu}\lfloor\alpha e_{\nu,j}\rfloor.
\]
This gives the first line of
\eqref{eq:unified-rounding-saturation-defect}.  Writing each
\(\alpha e_{\nu,j}\) as its integral and fractional parts gives the
second.  If \(k_\nu+\varepsilon_\nu=0\), its argument is zero and
\(\Delta_\nu=0\).  Otherwise its argument lies in
\([0,k_\nu+\varepsilon_\nu)\), so
\(\Delta_\nu\leq k_\nu+\varepsilon_\nu-1\).
The \(k_\nu\) polar branches containing the center give distinct
boundary equations in its conormal space.  If
\(\varepsilon_\nu=1\), the numerator contributes one further
transverse equation.  Hence
\(k_\nu+\varepsilon_\nu\leq c_\nu\).  Since \(c_\nu\geq1\), this
proves
\eqref{eq:unified-rounding-saturation-range} and the line-bundle
identity.
\end{proof}

We next isolate the two modifications that enter the Kontsevich
lattice: the exceptional line twist and the change in logarithmic
forms after the distinguished \(dw\)-line has been removed.

\begin{lemma}\label{lem:exterior-powers-elementary-modification}
Let
\((\mathscr X,\mathscr D,\mathscr Z,\mathscr P,w)\)
be an NC rational stack pair, let \(C\subset\mathscr X\) be a
boundary-admissible center, and write
\[
 \pi\colon\mathscr X'=\Bl_C(\mathscr X)\longrightarrow\mathscr X,
 \qquad
 i\colon E\hookrightarrow\mathscr X',
 \qquad
 p\colon E=\mathbb P(N_{C/\mathscr X})\longrightarrow C.
\]
Then the following hold.
\begin{enumerate}[label=\textup{(\roman*)}]
\item For a vector bundle \(\mathcal V\) on \(\mathscr X\) and an
  integer \(\Delta\geq0\), the quotient
  \[
   \frac{\pi^*\mathcal V(\Delta E)}{\pi^*\mathcal V}
  \]
  has a filtration with factors
  \begin{equation*}
   i_*\bigl(
    \mathcal O_{E/C}(-t)\otimes p^*(\mathcal V|_C)
   \bigr),
   \qquad 1\leq t\leq\Delta.
  \end{equation*}

\item After replacing \(\mathscr X\) by a sufficiently small open
  neighbourhood of \(|\mathscr P|\), the morphism
  \[
   \mathcal O_{\mathscr X}(-\mathscr P)
   \xrightarrow{\,dw\,}
   \Omega_{\mathscr X}^1(\log\mathscr D)
  \]
  is a locally split injection.  Denote its image by
  \(\Theta_w\), and put
  \[
   M_{\mathscr X}
   :=\Omega_{\mathscr X}^1(\log\mathscr D)/\Theta_w,
   \qquad
   \mathcal T^\vee
   :=\mathcal T_{C/(\mathscr X,\mathscr D)}^\vee.
  \]
  Here and below in \textup{(ii)}, all data, including
  \(C,E,i,p\), are restricted to this neighbourhood and its inverse
  image, and we retain the same notation.
  On a connected component \(C_\nu\), define
  \begin{equation*}
   \mathcal S_w^\vee|_{C_\nu}
   :=
   \begin{cases}
    0,&\varepsilon_\nu=0,\\
    \Theta_w|_{C_\nu},&\varepsilon_\nu=1.
   \end{cases}
  \end{equation*}
  Then \(\mathcal S_w^\vee\) is a subbundle of
  \(\mathcal T^\vee\), of rank \(\varepsilon_\nu\) on
  \(C_\nu\).  Put
  \[
   \mathcal T_{\mathrm{res}}^\vee
   :=\mathcal T^\vee/\mathcal S_w^\vee,
   \qquad
   \Theta:=\pi^*\Theta_w
   =dw'\,\mathcal O_{\mathscr X'}(-\pi^*\mathscr P),
   \qquad
   \Theta':=dw'\,\mathcal O_{\mathscr X'}(-\mathscr P'),
  \]
  and define the vector bundles
  \begin{equation*}
   M:=\pi^*M_{\mathscr X}
     =\frac{\pi^*\Omega_{\mathscr X}^1(\log\mathscr D)}{\Theta},
   \qquad
   M':=\frac{\Omega_{\mathscr X'}^1(\log\mathscr D')}{\Theta'}.
  \end{equation*}
  They fit into an exact sequence
  \begin{equation}
   0\longrightarrow M\longrightarrow M'
   \longrightarrow
   i_*\bigl(
    \mathcal O_{E/C}(-1)\otimes
    p^*\mathcal T_{\mathrm{res}}^\vee
   \bigr)
   \longrightarrow0.
   \label{eq:residual-logarithmic-elementary-modification}
  \end{equation}

  More explicitly, let
  \begin{equation}
   \mathcal H_C
   :=\operatorname{coker}\left(
    \mathcal T_{\mathrm{res}}^\vee
    \lhook\joinrel\longrightarrow M_{\mathscr X}|_C
   \right).
   \label{eq:residual-logarithmic-restriction-image}
  \end{equation}
  The composite
  \[
   \mathcal T^\vee\longrightarrow
   \Omega_{\mathscr X}^1(\log\mathscr D)|_C
   \longrightarrow M_{\mathscr X}|_C
  \]
  has kernel \(\mathcal S_w^\vee\), and hence induces the injection used
  in \eqref{eq:residual-logarithmic-restriction-image}.  The cokernel
  \(\mathcal H_C\) is locally free.
  For every \(m\geq0\), the quotient
  \(\bigwedge^mM'/\bigwedge^mM\) has a finite filtration.  Over the
  exceptional divisor above \(C_\nu\), its nonzero factors are
  \begin{equation}
   i_*\left(
    \mathcal O_{E/C}(-j)\otimes
    p^*\left(
     \bigwedge^{m-k}\mathcal H_C\otimes
     \bigwedge^k\mathcal T_{\mathrm{res}}^\vee
    \right)
   \right),
   \quad
   1\leq j\leq k\leq
   \min\{m,s_\nu-\varepsilon_\nu\},
   \label{eq:residual-exterior-modification-factors}
  \end{equation}
  where
  \(s_\nu=\operatorname{rank}(\mathcal T^\vee|_{C_\nu})\).
  In particular, every factor has the simpler form
  \begin{equation*}
   i_*\bigl(
    \mathcal O_{E/C}(-j)\otimes p^*\mathcal W
   \bigr),
   \qquad 1\leq j\leq s_\nu-\varepsilon_\nu,
  \end{equation*}
  for a vector bundle \(\mathcal W\) on \(C_\nu\).
\end{enumerate}
All constructions in \textup{(ii)} are made on the chosen polar
neighbourhood and its inverse image in \(\mathscr X'\).
\end{lemma}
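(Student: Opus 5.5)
All assertions are étale-local on \(\mathscr X\) and compatible with flat base change, so we work on an étale scheme chart with the coordinates of \cref{def:boundary-admissible-center} and descend at the end.

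For \textup{(i)} we induct on \(\Delta\), using the chain
\[
 \pi^*\mathcal V\subset\pi^*\mathcal V(E)\subset\cdots\subset\pi^*\mathcal V(\Delta E).
\]
Each successive quotient is
\(\pi^*\mathcal V(tE)|_E=i_*\bigl(p^*(\mathcal V|_C)\otimes\mathcal O_E(tE)\bigr)\).
By \cref{lem:smooth-stack-blowup-geometry}\textup{(i)}, \(\mathcal O_E(E)=\mathcal O_{E/C}(-1)\), which gives the stated factors.

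For \textup{(ii)}, the local splitting of \(dw\) near \(|\mathscr P|\) is \cref{prop:local-kontsevich-yu-package}\textup{(i)}. First we check that \(\mathcal S_w^\vee\) is a subbundle of \(\mathcal T^\vee\).
\begin{itemize}
\item When \(\varepsilon_\nu=0\) there is nothing to check.
\item When \(\varepsilon_\nu=1\), the center is \(C=(x_i\ (i\in A),\ y_j\ (j\in B))\) with \(1\in B\) and \(A\) meeting the polar indices. Then \(\Theta_w|_C\) is generated by \(\vartheta|_C=u\,dy_1\) modulo \(y_1\), since the \(y_1\,dx_i/x_i\) terms vanish on \(C\). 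This is one of the generators \(dy_j\) of \(\mathcal T^\vee\), as in the proof of \cref{lem:indeterminacy-logarithmic-conormal-line}\textup{(iii)}.
\item When \(\varepsilon_\nu=0\) but \(C\) meets the polar locus, \(\vartheta|_C\) is \(-\sum e_i\,dx_i/x_i\) or \(u\,dy_1-\dots\) with \(1\notin B\). Modulo \(\mathcal T^\vee=\langle dy_j:j\in B\rangle\) it is nonzero in a complementary direction.
\end{itemize}
These computations show that \(\mathcal T^\vee\to M_{\mathscr X}|_C\) has kernel exactly \(\mathcal S_w^\vee\) and saturated image. Hence \(\mathcal H_C\) is locally free, with explicit bases obtained by completing \(\vartheta\) and the \(dy_j\) to a logarithmic basis.

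To get \eqref{eq:residual-logarithmic-elementary-modification}, we compare the sequence \eqref{eq:intrinsic-logarithmic-blowup-modification} with the two lines \(\Theta\subset\Theta'\). By \eqref{eq:rationally-admissible-polar-transform}, \(\Theta'=\Theta(\sum\varepsilon_\nu E_\nu)\). Applying the snake lemma to \(\Theta\to\pi^*\Omega^1(\log\mathscr D)\) and \(\Theta'\to\Omega^1_{\mathscr X'}(\log\mathscr D')\), the cokernel \(M'/M\) is the quotient of \(i_*(\mathcal O_{E/C}(-1)\otimes p^*\mathcal T^\vee)\) by the image of \(\Theta'/\Theta\). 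When \(\varepsilon_\nu=1\), that image is \(i_*(\mathcal O_{E/C}(-1)\otimes p^*\mathcal S_w^\vee)\). This is checked in the chart \(y_1=x_1 s\), where \(dw'\) acquires the extra \(x_1^{-1}\) and its leading term is \(ds\)-type, exactly the image of \(dy_1\) twisted by \(\mathcal O(-1)\). Injectivity of \(M\to M'\) follows because both are vector bundles and the map is generically an isomorphism. The \(\mathcal O_{E/C}(-1)\) twist must be tracked carefully in the charts; this is the main bookkeeping point.

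For the exterior powers, we use the local bases already constructed. Write \(M=\mathcal H\oplus\mathcal R\) near \(C\), with \(\mathcal H\) lifting \(\mathcal H_C\) and \(\mathcal R\) lifting \(\mathcal T_{\mathrm{res}}^\vee\), of rank \(s_\nu-\varepsilon_\nu\) and spanned by classes of \(dy_j\) or \(dx_i\) equations of \(C\). Then \(M'\) is generated by \(\mathcal H\) and the forms \(\rho/\ell\), where \(\rho\in\mathcal R\) and \(\ell\) is a local equation of \(E\). So \(M'\) is locally the elementary modification \(\mathcal H\oplus\mathcal R(E)\) inside \(M(E)\), after an identification that only holds up to the twist along \(p\). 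Consequently
\[
 \textstyle\bigwedge^mM'=\bigoplus_k\bigwedge^{m-k}\mathcal H\otimes\bigwedge^k\mathcal R\,(kE)
\]
locally. The global filtration is by the degree \(k\) in the residual factor together with the order \(j\) of pole along \(E\). Its graded pieces for \(1\le j\le k\) are computed as in \textup{(i)}. They are \(i_*(\mathcal O_{E/C}(-j)\otimes p^*(\bigwedge^{m-k}\mathcal H_C\otimes\bigwedge^k\mathcal T^\vee_{\mathrm{res}}))\), with \(k\le\min\{m,s_\nu-\varepsilon_\nu\}\), which is \eqref{eq:residual-exterior-modification-factors}.

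The main obstacle is making this filtration intrinsic rather than chart-dependent. I would define it as the image filtration of \(\bigwedge^{m-k}M\wedge\bigwedge^kM'\) intersected with \(\bigwedge^mM(jE)\). Its graded pieces are then identified canonically via restriction to \(E\) and the surjection \(M'|_E\to\mathcal O_{E/C}(-1)\otimes p^*\mathcal T_{\mathrm{res}}^\vee\). Étale descent then finishes the proof.
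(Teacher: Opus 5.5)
Your proposal is correct and follows essentially the paper's proof: the twist chain for (i), the snake lemma comparing $\Theta\subset\Theta'=\Theta(\sum_\nu\varepsilon_\nu E_\nu)$ with the logarithmic cotangent modification, and a local splitting filtered by residual degree and pole order along $E$; the one real difference is that you verify the kernel $\mathcal S_w^\vee$ and the saturation of $\mathcal T^\vee\to M_{\mathscr X}|_C$ directly in coordinates, whereas the paper deduces them by restricting the residual sequence to $E$ and using $\operatorname{Tor}_1$ and faithful flatness of $p$. A minor caveat, which applies equally to the paper's one-line intrinsic description: locally $(\bigwedge^{m-k}M\wedge\bigwedge^kM')\cap\bigwedge^mM(jE)=\bigwedge^{m-\min(j,k)}M\wedge\bigwedge^{\min(j,k)}M'$, so your double filtration collapses to a single one whose graded pieces are, locally, sums over $k\geq j$ of the stated factors; separating the index $k$ needs one further residual-degree filtration on $C$, and this does not affect the simpler form used later.
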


\begin{proof}
For \textup{(i)}, insert the line bundles
\(\mathcal O_{\mathscr X'}(tE)\),
\(0\leq t\leq\Delta\), and use
\[
 \mathcal O_{\mathscr X'}(tE)/
 \mathcal O_{\mathscr X'}((t-1)E)
 \simeq i_*\mathcal O_{E/C}(-t).
\]

We prove \textup{(ii)} intrinsically; coordinates are needed only to
check two local assertions.  Near a point of \(|\mathscr P|\), write
\[
 w=\frac{u}{x_1^{e_1}\cdots x_q^{e_q}}
 \quad\text{or}\quad
 w=\frac{u z}{x_1^{e_1}\cdots x_q^{e_q}},
\]
where \(u\) is a unit.  Multiplying \(dw\) by the polar monomial
produces a logarithmic form with a unit coefficient.  Indeed, in the
first case a \(d\log x_i\)-coefficient restricts to \(-u e_i\) along
a polar branch.  In the second case, the \(dz\)-coefficient is a unit
where \(z=0\), while away from \((z=0)\) a
\(d\log x_i\)-coefficient is a unit along a polar branch.  Thus,
after shrinking around \(|\mathscr P|\), \(\Theta_w\) is a line
subbundle and \(M_{\mathscr X}\) is a vector bundle.

If \(\varepsilon_\nu=1\), boundary admissibility gives
\(C_\nu\subset I(w)\), and
\cref{lem:indeterminacy-logarithmic-conormal-line} identifies
\(\Theta_w|_{C_\nu}\) with the numerator conormal direction inside
\(\mathcal T^\vee|_{C_\nu}\).  On the other components we take the
zero subbundle.  Since the connected components are open and closed
in \(C\), these pieces form the global subbundle
\(\mathcal S_w^\vee\).  Consequently,
\[
 \operatorname{rank}
 \bigl(\mathcal T_{\mathrm{res}}^\vee|_{C_\nu}\bigr)
 =s_\nu-\varepsilon_\nu.
\]

The polar transformation
\(\pi^*\mathscr P
=\mathscr P'+\sum_\nu\varepsilon_\nu E_\nu\)
and the logarithmic cotangent modification of
\eqref{eq:intrinsic-logarithmic-blowup-modification} give compatible
exact sequences
\begin{align*}
 0&\longrightarrow\Theta\longrightarrow\Theta'
 \longrightarrow
 i_*\bigl(
  \mathcal O_{E/C}(-1)\otimes p^*\mathcal S_w^\vee
 \bigr)
 \longrightarrow0,
 \\
 0&\longrightarrow\pi^*\Omega_{\mathscr X}^1(\log\mathscr D)
 \longrightarrow\Omega_{\mathscr X'}^1(\log\mathscr D')
 \longrightarrow
 i_*\bigl(
  \mathcal O_{E/C}(-1)\otimes p^*\mathcal T^\vee
 \bigr)
 \longrightarrow0.
\end{align*}
On a component with \(\varepsilon_\nu=0\), the first quotient is
zero.  If \(\varepsilon_\nu=1\), it is the quotient of
\(\Theta'(-E_\nu)\subset\Theta'\).  The exceptional symbols are the
inclusion
\(\mathcal S_w^\vee\subset\mathcal T^\vee\), so the snake lemma
gives \eqref{eq:residual-logarithmic-elementary-modification}.

Let
\(\mathcal H=\operatorname{im}(i^*M\to i^*M')\).
Restricting \eqref{eq:residual-logarithmic-elementary-modification}
to \(E\), and using
\[
 \operatorname{Tor}_1^{\mathscr X'}\!\left(
  i_*\bigl(\mathcal O_{E/C}(-1)\otimes
       p^*\mathcal T_{\mathrm{res}}^\vee\bigr),
  \mathcal O_E
 \right)
 \simeq p^*\mathcal T_{\mathrm{res}}^\vee,
\]
gives
\[
 0\longrightarrow p^*\mathcal T_{\mathrm{res}}^\vee
 \longrightarrow i^*M\longrightarrow\mathcal H\longrightarrow0.
\]
Because \(M=\pi^*M_{\mathscr X}\), the first arrow is the pullback of
the induced map
\(\mathcal T_{\mathrm{res}}^\vee\to M_{\mathscr X}|_C\).
Faithful flatness of the projective bundle \(p\) shows that this map is
injective; equivalently, the original map from \(\mathcal T^\vee\) has
kernel \(\mathcal S_w^\vee\).  The same sequence identifies
\(\mathcal H=p^*\mathcal H_C\), and \(\mathcal H_C\) is locally free by
faithfully flat descent.

It remains to take exterior powers.  Filter intrinsically by the
number of factors mapping to the exceptional quotient in
\eqref{eq:residual-logarithmic-elementary-modification}, and then by
powers of the ideal of \(E\).  To compute the associated graded,
choose locally an equation \(t\) of \(E\) and a splitting in which
the modification has the form
\[
 M'=\widetilde{\mathcal H}\oplus\widetilde{\mathcal Q},
 \qquad
 M=\widetilde{\mathcal H}\oplus t\widetilde{\mathcal Q},
 \qquad
 \mathcal Q=
 \mathcal O_{E/C}(-1)\otimes
 p^*\mathcal T_{\mathrm{res}}^\vee.
\]
Filtering by the number \(k\) of
\(\widetilde{\mathcal Q}\)-factors and by their \(t\)-adic order
gives factors
\[
 i_*\left(
  \mathcal O_{E/C}(k-j)\otimes
  \bigwedge^{m-k}\mathcal H\otimes
  \bigwedge^k\mathcal Q
 \right),
 \qquad 1\leq j\leq k.
\]
Substituting
\(\mathcal H=p^*\mathcal H_C\) and the displayed expression for
\(\mathcal Q\) changes the exceptional degree to
\((k-j)-k=-j\), proving
\eqref{eq:residual-exterior-modification-factors}.  The filtration is
intrinsic and commutes with étale pullback, so these local
computations give the stated global filtration.
\end{proof}

\begin{theorem}[Boundary blowup invariance]
\label{thm:adapted-boundary-blowup-levelwise-invariance}
Let
\((\mathscr X,\mathscr D,\mathscr Z,\mathscr P,w)\)
be an NC rational stack pair, let \(C\) be a boundary-admissible
center, and let
\[
 \pi\colon\mathscr X'=\Bl_C(\mathscr X)\longrightarrow\mathscr X
\]
be the associated blowup.  Equip \(\mathscr X'\) with the reduced
inverse-image boundary and the transformed rational data.  Then
\(\pi\) is an NC rational stack-pair modification and, if
\(C=\coprod_\nu C_\nu\),
\begin{equation}
 \mathscr P'
 =\pi^*\mathscr P-\sum_\nu\varepsilon_\nu E_\nu.
 \label{eq:adapted-polar-transform-global}
\end{equation}
For every \(a\geq0\), \(\alpha\in\Q\cap[0,1)\), and
\(\lambda\in\mathbb Q\), pullback induces quasi-isomorphisms
\begin{align*}
 \Omega_{\mathscr X,w}^a(\alpha)
 &\xrightarrow{\ \sim\ }
 R\pi_*\Omega_{\mathscr X',w'}^a(\alpha),
 \\
 \Phi_{\pi,\lambda}\colon
 \Firr^\lambda\mathcal K_{\mathscr X,w}^\bullet
 &\xrightarrow{\ \sim\ }
 R\pi_*\Firr^\lambda\mathcal K_{\mathscr X',w'}^\bullet.
\end{align*}
For each fixed \(\alpha\), the same comparison identifies the
coherent integer-filtered models, their integer-slice quotients, and
their spectral sequences from \(E_1\).  At every rational level, it is
compatible with the maps to total twisted de Rham cohomology; if the
blowup is the identity on the common open model, the two image
subspaces are equal.

In particular, this includes both noncancelling good-model blowups
and the cancelling zero--pole blowups of
\cref{lem:rational-map-indeterminacy-locus}\textup{(ii)}.  By
composition they also apply to the morphic boundary strictifications
of \cref{prop:nc-model-strictification}.
\end{theorem}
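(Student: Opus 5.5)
The proof will go through the acyclic-defect criterion \cref{prop:acyclic-defect-descent-criterion}. The structural assertions come first and are immediate. \cref{lem:rationally-admissible-blowup-preserves-nc-pair} shows that the transformed data form an NC rational stack pair, that \(\pi\) is the identity off the boundary, and it gives \eqref{eq:adapted-polar-transform-global}. \cref{lem:smooth-stack-blowup-geometry}(i) shows that \(\pi\) is proper and that \eqref{eq:kontsevich-structure-sheaf-descent} holds. Hence \(\pi\) is a proper dominant modification, and once \(R\pi_*Q_\alpha^a=0\) is established for every \(a\) and \(\alpha\), all remaining conclusions follow formally: the levelwise \(\Phi_{\pi,\lambda}\), the integer slices, the \(E_1\) identification, and the equality of images. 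The final sentence about strictification then follows from composition, \cref{prop:kontsevich-comparison-functoriality}(i).

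To compute the defect, first work away from \(|\mathscr P|\). There \(\Omega_{\mathscr X,w}^a(\alpha)=\Omega_{\mathscr X}^a(\log\mathscr D)\), so \(Q_\alpha^a\) is the cokernel of \(\pi^*\Omega^a(\log\mathscr D)\to\Omega^a(\log\mathscr D')\). Taking exterior powers of \eqref{eq:intrinsic-logarithmic-blowup-modification}, as in \cref{lem:exterior-powers-elementary-modification}(ii) with \(\varepsilon=0\) and \(\Theta=0\), filters this cokernel by factors \(i_*(\mathcal O_{E/C}(-j)\otimes p^*\mathcal W)\) with \(1\le j\le s_\nu\le c_\nu-1\). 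The bound \(s_\nu\le c_\nu-1\) holds because \(a\ge1\) in \eqref{eq:good-model-center-coordinates}, so at least one equation of \(C\) is a boundary branch, whose \(d\log\) dies in \(\mathcal T^\vee\). Since \(Rp_*\mathcal O_{E/C}(-j)=0\) for \(1\le j\le c_\nu-1\), and \(\pi|_E=p\) followed by the inclusion of \(C\), each factor is \(R\pi_*\)-acyclic, and dévissage finishes this case.

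Near \(|\mathscr P|\), after shrinking, \cref{prop:local-kontsevich-yu-package}(i) gives \(\Omega_{\mathscr X,w}^a(\alpha)\simeq L\otimes(\Theta_w\otimes\bigwedge^{a-1}M_{\mathscr X}\oplus\bigwedge^aM_{\mathscr X})\), up to the twists \(\mathcal O(\lfloor\alpha\mathscr P\rfloor-\mathscr P)\) and \(\mathcal O(\lfloor\alpha\mathscr P\rfloor)\). More canonically, there is an extension \(0\to\Theta\otimes\bigwedge^{a-1}M\otimes\mathcal O(\lfloor\alpha\mathscr P\rfloor)\to\Omega_w^a(\alpha)\to\bigwedge^aM\otimes\mathcal O(\lfloor\alpha\mathscr P\rfloor-\mathscr P)\to0\), using the generators \(x^{-\lfloor\alpha e\rfloor}\vartheta\wedge\xi\) and \(x^{e-\lfloor\alpha e\rfloor}\eta\). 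The same holds on \(\mathscr X'\) with \(\Theta',M',\mathscr P'\). Pullback respects these extensions, so \(Q_\alpha^a\) is an extension of two cokernels, which I analyze separately. For the quotient part, factor the map as \(\pi^*(\bigwedge^aM_{\mathscr X})\otimes L_\alpha\to\bigwedge^aM\otimes L'_\alpha\to\bigwedge^aM'\otimes L'_\alpha\). By \cref{lem:kontsevich-blowup-exceptional-rounding}, the first cokernel is a twist by \(\Delta_\nu E_\nu\) with \(\Delta_\nu\le c_\nu-1\), filtered by \cref{lem:exterior-powers-elementary-modification}(i) with factors \(\mathcal O_{E/C}(-t)\), \(1\le t\le\Delta_\nu\). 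The second cokernel is filtered by \eqref{eq:residual-exterior-modification-factors} twisted by \(L'_\alpha\). Here lies the main obstacle: the exceptional degrees add. A factor \(\mathcal O_{E/C}(-j)\) from \(\bigwedge M'/\bigwedge M\) combined with a twist \(\Delta_\nu E_\nu\) yields degree \(-j-\Delta_\nu\), and this could exceed \(c_\nu-1\) in absolute value, leaving the vanishing range of \(Rp_*\).

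My plan is to order the filtration so that the \(E\)-twist is applied before the residual modification. Then a factor has degree exactly \(-j\) with \(j\le s_\nu-\varepsilon_\nu\), or degree \(-t\) with \(t\le\Delta_\nu\), each at most \(c_\nu-1\). A correct ordering requires a sharper bound, namely \(\Delta_\nu+(s_\nu-\varepsilon_\nu)\le c_\nu-1\). This comes from counting equations: \(c_\nu=k'_\nu+s_\nu\), where \(k'_\nu\ge k_\nu\) counts the boundary branches containing \(C_\nu\), so that \(\Delta_\nu\le k_\nu+\varepsilon_\nu-1\) gives \(\Delta_\nu+s_\nu-\varepsilon_\nu\le k_\nu+s_\nu-1\le c_\nu-1\). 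With this bound every combined factor \(i_*(\mathcal O_{E/C}(-m)\otimes p^*\mathcal W)\) has \(1\le m\le c_\nu-1\), hence is \(R\pi_*\)-acyclic. The \(\Theta\)-part is handled the same way, using \(\Theta'=\Theta(\varepsilon_\nu E_\nu)\) and \(\lfloor\alpha\mathscr P'\rfloor\) in place of the shifted twist; the \(\varepsilon\)-terms cancel exactly as in the second line of \eqref{eq:unified-rounding-saturation-defect}. The vanishing is étale-local on \(\mathscr X\) and compatible with flat base change for the representable \(\pi\), so it descends to the stack. This gives \(R\pi_*Q_\alpha^a=0\) and completes the argument.
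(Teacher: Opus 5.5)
Your proposal is essentially the paper's proof. Both reduce to the acyclic-defect criterion and filter \(Q_\alpha^a\) near \(|\mathscr P|\) into two kinds of pieces: line-twist pieces of degrees \(-1,\ldots,-\Delta\), and residual pieces of degrees \(-\Delta-j\) with \(1\le j\le s-\varepsilon\). Both close the argument with exactly your bound \(\Delta+j\le k+s-1\le c-1\). The only structural difference is the filtration. The paper uses the five-term chain \(\mathcal K_0\subset\cdots\subset\mathcal K_4\), which applies both twists before both residual steps. Your canonical \(dw'\)-extension instead splits off the \(\Theta\)-part first; this yields the same factors.

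Three small repairs are needed.

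\begin{enumerate}
\item Drop the sentence claiming that some ordering yields factors of degree exactly \(-j\) or \(-t\). Reordering does not stop the degrees from adding. If the twist is applied last, the extra negative degree simply reappears in the filtration of \(\bigwedge^aM'|_E\). What actually does the work is the combined bound.
\item The inequality \(\Delta_\nu\le k_\nu+\varepsilon_\nu-1\) is false when \(k_\nu+\varepsilon_\nu=0\); the lemma only gives \(\Delta_\nu\le\max\{k_\nu+\varepsilon_\nu-1,0\}\). In that case \(\Delta_\nu=0\), and you need \(s_\nu\le c_\nu-1\). This follows from \(a\ge1\), exactly as in your computation away from \(|\mathscr P|\).
\item For the \emph{in particular} clause, check two things. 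Morphic centers satisfy conditions (ii)--(iii) of the boundary-admissibility definition automatically, with \(\varepsilon=0\). The zero--pole centers \((z=x_1=0)\) satisfy the cancellation condition with \(\varepsilon=1\).
\end{enumerate}
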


\begin{proof}
The transformed data are an NC rational stack pair by
\cref{lem:rationally-admissible-blowup-preserves-nc-pair}, and
\eqref{eq:adapted-polar-transform-global} is
\eqref{eq:rationally-admissible-polar-transform}.  Fix \(a\) and
\(\alpha\), and let
\[
 Q_\alpha^a
 :=\operatorname{coker}\left(
  \pi^*\Omega_{\mathscr X,w}^a(\alpha)
  \longrightarrow
  \Omega_{\mathscr X',w'}^a(\alpha)
 \right).
\]

We first prove that this cokernel is killed by \(R\pi_*\).  The
calculation is étale-local on \(\mathscr X\).  Near a connected
component of the center, write the blowup as
\(\pi_V\colon V'=\Bl_YV\to V\), and choose normal-crossing
coordinates
\[
 D=(x_1\cdots x_r=0),
 \qquad
 Y=(x_1,\ldots,x_\ell,x_{r+1},\ldots,x_{r+s}).
\]
Put
\[
 c:=\ell+s,
 \qquad
 E:=\pi_V^{-1}(Y),
 \qquad
 i\colon E\hookrightarrow V',
 \qquad
 p\colon E\to Y.
\]
Assume first that the chosen center point lies in \(|P|\), and shrink
the chart inside the polar neighbourhood of
\cref{lem:exterior-powers-elementary-modification}\textup{(ii)}.
Let \(k\) be the number of polar branches containing \(Y\), let
\(\varepsilon\) be the cancellation index, and put
\(\Delta=\Delta(\alpha)\).  Define
\[
 L:=\mathcal O_{V'}\bigl(
   \pi_V^*(\lfloor\alpha P\rfloor-P)
  \bigr),
 \qquad
 L':=\mathcal O_{V'}(\lfloor\alpha P'\rfloor-P').
\]
By \cref{lem:kontsevich-blowup-exceptional-rounding},
\begin{equation*}
 L'=L(\Delta E),
 \qquad
 0\leq\Delta\leq\max\{k+\varepsilon-1,0\}.
\end{equation*}

Inside \(\Omega_{V'}^a(*D')\), put
\begin{align*}
 A&:=dw'\wedge\pi_V^*\Omega_V^{a-1}(\log D),
 &B&:=\pi_V^*\Omega_V^a(\log D),\\
 A'&:=dw'\wedge\Omega_{V'}^{a-1}(\log D'),
 &B'&:=\Omega_{V'}^a(\log D').
\end{align*}
With negative exterior degrees interpreted as zero, set
\begin{align*}
 \mathcal K_0&:=L(A+B),
 &\mathcal K_1&:=L'A+LB,\\
 \mathcal K_2&:=L'(A+B),
 &\mathcal K_3&:=L'(A'+B),\\
 \mathcal K_4&:=L'(A'+B').
\end{align*}
The zero-level description
\eqref{eq:zero-level-kontsevich-lattice} identifies the endpoints as
\[
 \mathcal K_0=\pi_V^*\Omega_{V,w}^a(\alpha),
 \qquad
 \mathcal K_4=\Omega_{V',w'}^a(\alpha).
\]

Let \(M\) and \(M'\) be the vector bundles of
\cref{lem:exterior-powers-elementary-modification}\textup{(ii)}.
The intersection identity of
\cref{lemma:intersection-of-logarithmic-and-polar-forms}, applied
before and after the blowup, and a local splitting of the residual
modification give
\[
 A\cap B=A'\cap B
 =dw'\wedge\pi_V^*\Omega_V^{a-1}(\log D)(-\pi_V^*P),
 \qquad
 A'\cap B'
 =dw'\wedge\Omega_{V'}^{a-1}(\log D')(-P').
\]
For the cross identity, use the normal form from
\cref{lem:exterior-powers-elementary-modification}\textup{(ii)}.
If \(t\) is an equation of \(E\), compatible local splittings in
degree one have the form
\[
 \pi_V^*\Omega_V^1(\log D)=\Theta\oplus M,
 \qquad
 \Omega_{V'}^1(\log D')=\Theta'\oplus M',
 \qquad
 \Theta=t^\varepsilon\Theta',
\]
inside the meromorphic cotangent bundle.  Since \(A'\) has the
distinguished \(dw'\)-direction, its intersection with \(B\) is the
first summand
\(\Theta\wedge\bigwedge^{a-1}M=A\cap B\).  Thus
\(A'\cap B=A\cap B\), and in degree \(a\) the same splittings give
\[
 B=(A\cap B)\oplus\bigwedge^aM,
 \qquad
 B'=(A'\cap B')\oplus\bigwedge^aM'.
\]
The displayed intersections and splittings imply
\[
 \mathcal K_0\subset\mathcal K_1\subset\mathcal K_2
 \subset\mathcal K_3\subset\mathcal K_4.
\]
All relevant intersections are split in these local decompositions.
Applying \((X+Y)/X\simeq Y/(X\cap Y)\) at each step, together with
\(A'\cap B=A\cap B\),
\(B/(A\cap B)=\bigwedge^aM\), and the analogous decompositions for
\(A'\) and \(B'\), gives the successive quotients
\begin{align*}
 \mathcal K_1/\mathcal K_0
 &\simeq (L'/L)\otimes
  \bigwedge^{a-1}M\cdot dw',
 \\
 \mathcal K_2/\mathcal K_1
 &\simeq (L'/L)\otimes\bigwedge^aM,
 \\
 \mathcal K_3/\mathcal K_2
 &\simeq L'\otimes
  \left(
   \frac{\bigwedge^{a-1}M'}{\bigwedge^{a-1}M}
  \right)\cdot dw',
 \\
 \mathcal K_4/\mathcal K_3
 &\simeq L'\otimes
  \frac{\bigwedge^aM'}{\bigwedge^aM}.
\end{align*}
Thus pullback is injective.  After the factor \(L\) is included, both
\(\bigwedge^{a-1}M\cdot dw'\) and \(\bigwedge^aM\) are pullbacks of
vector bundles on \(V\).  The first two quotients are therefore controlled
by \cref{lem:exterior-powers-elementary-modification}\textup{(i)}
and have exceptional degrees
\(-1,\ldots,-\Delta\).  For the last two,
\cref{lem:exterior-powers-elementary-modification}\textup{(ii)}
directly gives degrees \(-\Delta-j\), where
\[
 1\leq j\leq s-\varepsilon;
\]
here the term \(-\Delta\) comes from the restriction of
\(L'=L(\Delta E)\) to \(E\).  If a line-twist factor occurs, then
\(\Delta>0\), hence \(k+\varepsilon>0\), and
\[
 1\leq\mu\leq\Delta
 \leq k+\varepsilon-1
 \leq c-1.
\]
For a residual factor, if \(k+\varepsilon>0\), then
\[
 \Delta+j
 \leq(k+\varepsilon-1)+(s-\varepsilon)
 =k+s-1
 \leq\ell+s-1
 =c-1.
\]
If \(k+\varepsilon=0\), then \(\Delta=0\) and
\(\Delta+j=j\leq s\leq c-1\), because \(\ell\geq1\).
It follows that the restriction of \(Q_\alpha^a\) to this chart has
a finite filtration whose nonzero factors are finite direct sums of
\begin{equation}
 i_*\bigl(
  \mathcal O_{E/Y}(-\mu)\otimes p^*\mathcal V
 \bigr),
 \qquad
 1\leq\mu\leq c-1,
 \label{eq:adapted-blowup-exceptional-factor-range}
\end{equation}
with \(\mathcal V\) locally free on \(Y\).

At a center point outside \(|P|\), shrink the chart to be disjoint
from \(P\).  The fractional Kontsevich lattice is then
\(\Omega_V^a(\log D)\).  Applying the intrinsic ideal-adic
exterior-power filtration used in the proof of
\cref{lem:exterior-powers-elementary-modification}\textup{(ii)} to
\eqref{eq:intrinsic-logarithmic-blowup-modification} gives factors
\(\mathcal O_{E/Y}(-j)\otimes p^*\mathcal V\),
\(1\leq j\leq s\).  Since the center contains at least one boundary
direction,
\(s\leq\ell+s-1=c-1\).  Thus
\eqref{eq:adapted-blowup-exceptional-factor-range} holds on every
chart.

For \(1\leq\mu\leq c-1\), the projective-bundle formula gives
\[
 Rp_*\mathcal O_{E/Y}(-\mu)=0.
\]
The projection formula and dévissage therefore kill every factor in
\eqref{eq:adapted-blowup-exceptional-factor-range}, hence
\(R\pi_*Q_\alpha^a=0\).  Proper flat base change for the
representable blowup and faithful flatness of an étale atlas descend
this conclusion from the charts to \(\mathscr X\).

Structure-sheaf descent is part of
\cref{lem:smooth-stack-blowup-geometry}.  The acyclic-defect criterion
of \cref{prop:acyclic-defect-descent-criterion} therefore gives both
displayed quasi-isomorphisms and all fixed-\(\alpha\) consequences.
Total compatibility in
\cref{prop:yu-total-comparison-compatibility} gives the assertion
about image subspaces.

For a morphic potential, the zero and polar fibers are disjoint, so
conditions~\textup{(ii)}--\textup{(iii)} of
\cref{def:boundary-admissible-center} are automatic and every
cancellation index is zero.  The zero--pole centers of
\cref{lem:rational-map-indeterminacy-locus}\textup{(ii)} satisfy
\eqref{eq:rationally-admissible-cancellation-condition} with
\(\varepsilon=1\).  The final assertions follow.
\end{proof}

\begin{remark}[The Chen--Yu defect inside the filtration]
\label{rem:kontsevich-cancelling-versus-noncancelling}
For the minimal zero--pole center \(Y=(z=x_1=0)\), one has
\(c=2\), \(s=\varepsilon=1\), and
\[
 \Delta(\alpha)
 =\lfloor\alpha(e_1-1)\rfloor
  -\lfloor\alpha e_1\rfloor+1
 \in\{0,1\}.
\]
The residual logarithmic bundle has rank zero.  Hence the last two
steps of the five-term filtration vanish.  If \(\Delta(\alpha)=0\),
the first two vanish as well; if \(\Delta(\alpha)=1\), their factors
are copies of \(\mathcal O_{E/Y}(-1)\).  This recovers exactly the
two-chart Kontsevich cokernel in
\cite[\S2.3, formulas~(9)--(12)]{ChenYu}.
\end{remark}
  \section{Global Resolutions and Compactification Independence}
\label{sec:global-resolution-independence}

The local filtered calculations are complete in
\cref{sec:elementary-stacky-modifications}.  This final section has
two global tasks.  First, it replaces an NC rational model by a good
one: toroidal strictification removes boundary-branch monodromy, and
then finitely many globally defined zero--pole blowups resolve the
rational map.  
Second, relative stacky weak
factorization compares the resulting good models.  
We also record the
independent existence theorem for good compactifications and conclude
with the proof of \cref{thm:introduction-nc-rational-compactification-independence}.

\subsection{Toroidal boundary strictification}
\label{subsec:nc-boundary-strictification}

\begin{lemma}\label{lem:coarse-projectivity}
Let
\(f\colon\mathscr Z'\to\mathscr Z\) be a representable projective
morphism of finite-type Deligne--Mumford stacks over \(\C\).  If the
coarse moduli space \(Z\) of \(\mathscr Z\) is a projective scheme,
then the coarse moduli space \(Z'\) of \(\mathscr Z'\) is a projective
scheme.
\end{lemma}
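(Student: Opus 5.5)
The plan is to produce an ample line bundle on the coarse space \(Z'\) from a relatively ample sheaf for \(f\) and an ample bundle pulled back from \(Z\), after passing to tensor powers that descend to coarse spaces.

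First, the setup. Since \(f\) is representable and projective, there is an \(f\)-ample line bundle \(\mathcal L\) on \(\mathscr Z'\). For a blowup we may take \(\mathcal O(-E)\), but in general it comes from the relative Proj description. Let \(p\colon\mathscr Z\to Z\) and \(p'\colon\mathscr Z'\to Z'\) be the coarse moduli maps. The morphism \(f\) induces a map \(\bar f\colon Z'\to Z\), by the universal property of coarse spaces, and \(p\circ f=\bar f\circ p'\).

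Next, reduce to schemes. Stabilizers are finite, so after replacing \(\mathcal L\) by a power \(\mathcal L^{\otimes N}\), the stabilizers act trivially on its fibres. By the standard descent criterion for tame stacks, \(\mathcal L^{\otimes N}\simeq p'^*M\) for a line bundle \(M\) on \(Z'\); here one can use \cite[Theorem~3.2]{AOV} to see that \(p'_*\) of the power is invertible. Fix an ample \(A\) on \(Z\).

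The key steps, in order:
\begin{enumerate}[label=\textup{(\arabic*)}]
\item Show that \(\bar f\) is proper and quasi-finite relative to the fibres of \(f\). More precisely, show that \(M\) is \(\bar f\)-ample. Ampleness can be tested on fibres over points of \(Z\). Over an étale chart \(V\to Z\), with \(\mathscr Z\times_ZV\simeq[W/G]\) locally, the pullback \(f_W\colon W'\to W\) is projective with \(G\)-linearized ample \(\mathcal L\). Then \(Z'\times_ZV\) is the GIT-free quotient \(W'/G\), which is projective over \(W/G\), and \(\mathcal L^{N}\) descends to an ample bundle on it. This is the classical fact that a finite quotient of a projective-over-affine scheme by a linearized action is projective, with the descended power ample.
\item Since \(\bar f\) is proper and \(M\) is \(\bar f\)-ample, \(M\otimes\bar f^*A^{\otimes m}\) is ample on \(Z'\) for \(m\gg0\). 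This is the standard statement for a proper morphism to a projective scheme with relatively ample bundle. In particular \(Z'\) is a scheme, not merely an algebraic space, and it is projective because it is proper over \(\C\).
\end{enumerate}

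The main obstacle is step (1): coarse space formation commutes with flat (étale) base change on \(Z\) for tame stacks, but one must check that relative ampleness descends along the étale cover \(V\to Z\). That it may is fpqc-local on the base. The hardest point is producing the \(G\)-invariant sections: I would use exactness of invariants to lift sections of \(M\) from \(W'\). Properness of \(\bar f\) follows from properness of \(f\) and surjectivity of \(p'\).
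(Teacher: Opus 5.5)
Your argument is correct. Its local input is the same as the paper's:
\begin{itemize}
\item pass to an étale quotient chart \(\mathscr Z\times_Z V\simeq[W/G]\) over the coarse space;
\item use tame base change to identify \(Z'\times_Z V\) with \(W'/G\);
\item invoke the classical fact that a finite-group quotient of a scheme projective over an affine base is projective.
\end{itemize}

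The difference is in the globalization. The paper asserts that projectivity is étale-local on \(Z\). It checks the charts (quasi-projectivity of \(W/H\) by the cited quotient lemma, properness via the finite surjection \(W\to W/H\)) and then uses projectivity of \(Z\).

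You instead fix a global polarization first. A power \(\mathcal L^{\otimes N}\) of an \(f\)-ample bundle descends through the tame coarse map to a line bundle \(M\) on \(Z'\). You check \(\bar f\)-ampleness of this one bundle chartwise, and EGA~II then makes \(M\otimes\bar f^*A^{\otimes m}\) ample.

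This buys rigor exactly where the paper is terse. Projectivity of a morphism is not étale-local, or even Zariski-local, on the target (Hironaka's example). By contrast, \(\bar f\)-ampleness of a fixed line bundle is fpqc-local on the target. So a global bundle such as your \(M\) is what is tacitly needed behind the paper's opening reduction.

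The price is an extra descent lemma for line bundles on tame stacks, and the explicit use of a global \(f\)-ample \(\mathcal L\). The blowups and closed immersions to which the lemma is applied do carry such a bundle.

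Two small repairs:
\begin{itemize}
\item Delete the phrase ``quasi-finite relative to the fibres of \(f\)''. It is meaningless, since \(\bar f\) need not be quasi-finite.
\item Derive that \(Z'\) is a scheme from descent of the relatively ample polarization, for instance \(Z'\simeq\underline{\operatorname{Proj}}_Z\bigoplus_{n\geq0}\bar f_*M^{\otimes n}\). This must come before, not after, the EGA~II ampleness statement, which presupposes a scheme.
\end{itemize}
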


\begin{proof}
Projectivity is \'etale-local on \(Z\).  On a tame quotient chart
over an affine scheme \(U\), write
\[
 \mathscr Z\times_ZU\simeq[U_0/H],
 \qquad
 W:=\mathscr Z'\times_{\mathscr Z}U_0,
\]
where \(U_0\to U\) is finite and \(H\) is finite \'etale.  After a
further \'etale base change, \(H\) is constant.  The space \(W\) is a
projective scheme over \(U_0\), hence over \(U\).  Tame base change
gives
\[
 Z'\times_ZU\simeq W/H
\]
\cite[Corollary~3.3]{AOV}.  The quotient is a quasi-projective scheme by
\cite[Lemma~2.1(2)]{GS}.  The finite surjection \(W\to W/H\) shows that
the quotient is proper over \(U\).  Hence it is
projective over \(U\).  Thus \(Z'\to Z\) is projective.  Since \(Z\)
is projective, \(Z'\) is a scheme and is projective over \(\C\). \end{proof}

Recall that an NC pair \((\mathscr Y,\mathscr E)\) is \'etale-locally
given in regular coordinates by
\[
 E_V=(q_1\cdots q_r=0)
\]
on an \'etale scheme chart \(V\to\mathscr Y\).  After shrinking \(V\) and
separating the connected components of the divisors \((q_i=0)\), these
branches give a finite labeled SNC presentation of \(E_V\).  We call
such a chart \emph{branch-splitting}.  Quasi-compactness provides an
atlas consisting of finitely many such charts, although its \'etale
relation may still permute their labeled branches.

\begin{proposition}\label{prop:nc-model-strictification}
Consider an NC rational stack compactification
\((\mathscr X,\mathscr D,\mathscr Z,\mathscr P,w)\) of
\((\mathscr U,w|_{\mathscr U})\).  There is a finite composite
\begin{equation}
 \sigma\colon
 \mathscr X^{\mathrm{str}}=\mathscr X_N
 \longrightarrow\mathscr X_{N-1}\longrightarrow\cdots
 \longrightarrow\mathscr X_0=\mathscr X.
 \label{eq:strictification-blowup-tower}
\end{equation}
For \(0\leq i\leq N\), equip \(\mathscr X_i\) with the reduced
inverse-image boundary \(\mathscr D_i\), the pullback potential
\(w_i\), and its actual relatively prime zero and polar divisors
\(\mathscr Z_i\) and \(\mathscr P_i\).  Then the following hold.
\begin{enumerate}[label=\textup{(\roman*)}]
\item Every intermediate tuple
  \[
   (\mathscr X_i,\mathscr D_i,\mathscr Z_i,\mathscr P_i,w_i)
  \]
  is an NC rational stack compactification of the same open LG model.
  Every arrow is a blowup along a smooth closed substack of the
  current boundary.  On every branch-splitting \'etale scheme chart, its
  center is a disjoint union of strict transforms of closed coordinate
  intersections.  In particular, the center is boundary-admissible, and
  all its cancellation indices are zero.
\item The final boundary \(\mathscr D_N\) carries a finite labeled SNC
  presentation; equipped with this presentation, the final tuple is
  strict.
\end{enumerate}

Consequently, \(\sigma\) is representable and projective and is an
isomorphism over \(\mathscr U\).  If the original model is morphic, the
final model is good.  If the original model is projective, so is the
final model.
\end{proposition}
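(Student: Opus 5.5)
The plan is to realize the barycentric subdivision of the boundary's generalized cone complex by blowing up boundary strata in order of decreasing codimension, as in \cite[Sections~4.3 and~5.3]{MPS} and \cite[Sections~3.2.1--3.2.3]{HolmesSchwarz}. First define, for each \(m\geq1\), the closed substack \(\mathscr D^{[m]}\subset\mathscr X\) of points where at least \(m\) local branches of \(\mathscr D\) meet. On a branch-splitting chart \(V\) with \(D_V=(q_1\cdots q_r=0)\), \(\mathscr D^{[m]}|_V\) is the union of the coordinate intersections \((q_{i_1}=\cdots=q_{i_m}=0)\). It is intrinsic, because the étale relation only permutes branches, so it descends to \(\mathscr X\). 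Let \(n_0\) be the maximal number of branches through a point. The deepest locus \(\mathscr D^{[n_0]}\) is smooth, since distinct maximal coordinate intersections are disjoint. Blow it up first. Then inductively, at stage \(m=n_0-1,\ldots,2\), blow up the strict transform of \(\mathscr D^{[m]}\).

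The key claim, proved by induction on the stage, is this. After the deeper strata have been blown up, on every branch-splitting chart the strict transforms of the distinct \(m\)-fold coordinate intersections become pairwise disjoint. So the strict transform of \(\mathscr D^{[m]}\) is a disjoint union of smooth closed substacks. Each is locally a coordinate intersection of strict transforms of original branches, which is exactly the form required in (i). Disjointness is checked with \cref{lem:smooth-stack-blowup-geometry}\textup{(ii)(c)}, applied inductively. Two \(m\)-fold intersections meet only along an intersection of at least \(m+1\) branches, which has already been blown up. After that blowup the strict transforms of the corresponding branches are separated above the center, and intersections with other divisors are unchanged away from it. This is the standard MPS argument.

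Each center is a stratum of \(\mathscr D_i\), so \(C\subset|\mathscr D_i|\) with \(b=r\) in \eqref{eq:good-model-center-coordinates}. The center also has normal crossings with \(\mathscr D_i+\mathscr Z_i\) near the polar locus, by polar nondegeneracy, since the zero divisor there is a transverse smooth divisor. The cancellation index is zero because the center involves only boundary coordinates and no numerator equation. Hence it is not contained in \(|\mathscr Z_i|\) near \(|\mathscr P_i|\) unless it misses the polar locus, so condition~\eqref{eq:rationally-admissible-cancellation-condition} is vacuous. By \cref{lem:rationally-admissible-blowup-preserves-nc-pair}, every \(\mathscr X_i\) is again an NC rational stack compactification of the same open model, and \(\mathscr P_i=\pi^*\mathscr P_{i-1}\).

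For (ii), after the last stage no point lies on two local branches of a single global irreducible component. Every component of the final boundary is either an exceptional divisor \(E\) over a connected stratum component or a strict transform of an original component. The main obstacle is showing that each of these is smooth, with no self-crossing, and that stabilizers no longer permute branches through a point. For this I would use the tree/barycentric description: in a chart, distinct branches through a point of \(\mathscr X_N\) correspond to a flag of strata of distinct dimensions. A stabilizer or the étale relation preserves this flag, so it fixes each branch. The labels are then the connected components of the exceptional divisors and strict transforms, which is a finite set by quasi-compactness. The required codimension condition on intersections follows from the local coordinate description.

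Finally, blowups are representable and projective (\cref{lem:smooth-stack-blowup-geometry}), are isomorphisms over \(\mathscr U\), and preserve morphicity by composition. Coarse projectivity is preserved by \cref{lem:coarse-projectivity}.
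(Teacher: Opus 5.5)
Your proposal is correct and follows essentially the paper's route. The paper packages the decreasing-codimension blowups of strict transforms of coordinate intersections, together with the distinct-cardinality flag argument for (ii), as \cref{lem:barycentric-smooth-center-blowups}; it then gets boundary admissibility and zero cancellation indices from the polar normal form and applies \cref{lem:rationally-admissible-blowup-preserves-nc-pair} at each stage, just as you do. One correction: \cref{lem:smooth-stack-blowup-geometry}\textup{(ii)(c)} treats only codimension-two centers, so above a deeper center $L_{I\cup J}$ you should read the separation off the coordinate blowup, where the strict transforms of the two $m$-fold intersections $L_I,L_J$ meet the exceptional divisor in the disjoint linear subspaces $\mathbb P(N_{L_{I\cup J}/L_I})$ and $\mathbb P(N_{L_{I\cup J}/L_J})$; above a center of codimension at least three it is these intersections that become disjoint, not the boundary branches.
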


We isolate the geometric input used in the proof.  For background, the
classical language of toroidal embeddings and polyhedral subdivisions is
developed in \cite{KKMSD}; logarithmic structures on algebraic stacks are
treated in \cite{Olsson}, and skeletons, fans, and Artin fans of
logarithmic structures in \cite{ACMUW}.

For an NC pair \((\mathscr Y,\mathscr E)\), choose a branch-splitting
atlas \(V\to\mathscr Y\), with \'etale relation
\(R\rightrightarrows V\).  The strata of \(V\) and \(R\), with
their generization and pullback maps, determine a finite diagram of
regular cones and face morphisms.  Its colimit is denoted
\(\Sigma(\mathscr Y,\mathscr E)\); it is independent of the atlas up to
canonical isomorphism
\cite[Section~2.6 and Sections~6.1--6.2]{ACP}.  In the presence of
monodromy, subdivision means compatible subdivision of this diagram
before taking its colimit \cite[Sections~4.2--4.5]{MPS}.  The following
formulation makes explicit the needed factorization into smooth-center
blowups.

\begin{lemma}\label{lem:barycentric-smooth-center-blowups}
Let \((\mathscr Y,\mathscr E)\) be a smooth finite-type
Deligne--Mumford stack with reduced NC boundary, and put
\[
 \Sigma:=\Sigma(\mathscr Y,\mathscr E).
\]
Then the following hold.
\begin{enumerate}[label=\textup{(\roman*)}]
\item The toroidal modification
  \[
   \operatorname{Bar}(\Sigma)\longrightarrow\Sigma
  \]
  is realized by a finite sequence of ordinary blowups along smooth
  centers contained in the current reduced boundary and having normal
  crossings with it; all these morphisms are representable and projective.

  On every branch-splitting \'etale chart, the center at the stage indexed
  by codimension \(k\) is the disjoint union of the strict transforms of
  the closed codimension-\(k\) coordinate intersections; the values of
  \(k\) are treated in decreasing order, with the trivial
  codimension-one blowups omitted.  The construction commutes with
  \'etale base change and descends to \(\mathscr Y\).

\item The generalized cone complex \(\operatorname{Bar}(\Sigma)\) is a
  genuine cone complex: every cone embeds in the colimit, and no
  automorphism identifies two distinct rays in one of its cones.  In
  particular, the final reduced boundary after these blowups carries a
  finite labeled SNC presentation.
\end{enumerate}
\end{lemma}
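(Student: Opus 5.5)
The plan is to reduce the statement to the case of a scheme with a labeled SNC boundary, construct the blowup sequence there, check that it is canonical (independent of choices and compatible with étale base change), and then descend. First I would work on a single branch-splitting étale chart \(V\) with \(E_V=(q_1\cdots q_r=0)\). Here the cone complex is the standard simplicial one, and the iterated barycentric subdivision is the classical one, realized by the De Concini--Procesi / Hu sequence of blowups. For \(k\) running from \(r\) down to \(2\), blow up the disjoint union of the strict transforms of all closed codimension-\(k\) coordinate intersections \(\{q_{i_1}=\cdots=q_{i_k}=0\}\) (taking connected components).

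The key local facts to verify inductively on \(k\) are the following.
\begin{enumerate}[label=\textup{(\alph*)}]
\item At stage \(k\), the strict transforms of distinct codimension-\(k\) strata are disjoint and smooth, and each has normal crossings with the current reduced boundary. I would prove this using \cref{lem:smooth-stack-blowup-geometry}\textup{(ii)(c)}: after all strata of codimension \(>k\) have been blown up, two codimension-\(k\) strata meet only inside a stratum of larger codimension, and the strict transforms are separated above the exceptional divisor.
\item Each blowup center lies in the boundary and is a coordinate stratum in the sense of \eqref{eq:good-model-center-coordinates} with \(b=r\).
\end{enumerate}
Representability and projectivity of each step then follow from \cref{lem:smooth-stack-blowup-geometry}\textup{(i)}. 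The combinatorial identification with \(\operatorname{Bar}(\Sigma)\) is the standard fact that blowing up the codimension-\(k\) cone centres in decreasing \(k\) inserts the barycentres of the \(k\)-dimensional cones in decreasing dimension.

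For descent, I would argue that the centre at each stage is intrinsic, namely the union of closures of codimension-\(k\) boundary strata of the current NC pair, which is defined independently of branch labels (equivalently, by depth of the log stratification). It is therefore preserved by the étale relation \(R\rightrightarrows V\), even when branches are permuted. Since blowup commutes with flat base change, the chartwise blowups glue to a blowup of \(\mathscr Y\) along a smooth closed substack, and \cref{lem:smooth-stack-blowup-geometry} is stated étale-locally, so it applies.

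For \textup{(ii)}, the main obstacle is showing that monodromy disappears. Every cone of \(\operatorname{Bar}(\Sigma)\) is spanned by barycentres \(b_{\tau_1},\dots,b_{\tau_m}\) of a strictly increasing flag \(\tau_1\subsetneq\cdots\subsetneq\tau_m\) of faces. An automorphism coming from the étale relation sends flags to flags and preserves dimension, so it fixes each ray of such a cone. Hence no two rays of one cone are identified, and each cone embeds in the colimit. I would then translate this back to geometry: each exceptional divisor created at stage \(k\) is the preimage of an intrinsic codimension-\(k\) stratum, and its branches at a point correspond to rays with distinct dimension labels. Consequently the connected components of the final boundary components, labeled by stage and component, are smooth, with intersections smooth of the correct codimension; that is, they give a finite labeled SNC presentation. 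Finiteness follows from quasi-compactness. The delicate check is that a single exceptional component cannot self-intersect. This holds because two local branches of the same component would have to come from the same stage \(k\), which would mean two distinct codimension-\(k\) strata meet after the stage-\(k\) blowup, contradicting (a).
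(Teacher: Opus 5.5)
Your chartwise construction and your flag argument for \textup{(ii)} follow the paper's route. The descent step fails as written, however, because you misidentify the intrinsic center.

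The stage-$k$ center is not the union of the closures of the codimension-$k$ strata of the \emph{current} NC pair. After the deeper blowups, the current boundary contains exceptional divisors, and its codimension-$k$ strata include intersections with them. For example, take a three-dimensional chart with $E_V=(q_1q_2q_3=0)$ and blow up the point $L_{123}$, with exceptional divisor $F\simeq\mathbb P^2$. Write $\widetilde D_i$ for the strict transform of $(q_i=0)$. The codimension-two strata of the current pair are then the strict transforms $\widetilde L_{ij}$ together with the lines $\widetilde D_i\cap F$. The line $\widetilde D_1\cap F$ passes through both points $\widetilde L_{12}\cap F$ and $\widetilde L_{13}\cap F$. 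So the locus you propose to blow up is connected and singular, it is not a smooth center, and its blowup does not realize $\operatorname{Bar}(\Sigma)$.

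What you need is an intrinsic form of your own chart description. Take the strict transform of the original closed locus where at least $k$ local branches of $\mathscr E$ meet; equivalently, the locus where $k$ branches of the strict transform of the original boundary meet. This is independent of labels. Since blowups and strict transforms commute with \'etale base change, induction shows that its two pullbacks to $R$ agree. This is how the paper descends the centers.

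Two smaller corrections.

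First, \cref{lem:smooth-stack-blowup-geometry}\textup{(ii)(c)} concerns two divisors meeting along the center. So for $|I|=|J|=k\geq2$ it does not by itself separate $\widetilde L_I$ and $\widetilde L_J$. There you need the coordinate computation: over the strict transform of $L_{I\cup J}$, they meet the exceptional divisor in the disjoint projectivized coordinate subspaces indexed by $J\setminus I$ and $I\setminus J$.

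Second, in \textup{(ii)} the components whose self-intersection must be ruled out are mainly the strict transforms of the original branches, which carry the self-intersections and monodromy of $\mathscr E$. Your ``delicate check'' treats only exceptional components, but those are the easy case: they lie over centers that are smooth by your fact (a), hence are smooth and stay smooth under the later normal-crossing blowups. The original branches become pairwise disjoint at stage $2$. That is exactly where \textup{(ii)(c)} does apply, with $A=\widetilde D_i$, $B=\widetilde D_j$ and $C$ a component of $\widetilde L_{ij}$.
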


\begin{proof}
The barycentric subdivision and its realization by successive star
subdivisions are standard; see
\cite[Section~5.3, especially pp.~334--335]{MPS} and
\cite[Lemma~A.2.4]{Harper}.  We record the smooth-center factorization
and descent details needed below.

On a branch-splitting \'etale chart, write
\[
 E=(q_1\cdots q_r=0),\qquad
 L_I:=\bigcap_{i\in I}(q_i=0)
 \quad(\varnothing\ne I\subseteq\{1,\ldots,r\}).
\]
Blowing up \(L_I\) corresponds to the star subdivision of
\(\mathbb R_{\geq0}^{r}\) at
\[
 b_I=\sum_{i\in I}e_i.
\]
Performing these subdivisions in decreasing order of \(|I|\) gives
\(\operatorname{Bar}(\mathbb R_{\geq0}^{r})\).
Immediately before the subsets of cardinality \(k\) are treated, the
strict transforms of the \(L_I\), \(|I|=k\), are pairwise disjoint:
for \(I\ne J\),
\[
 L_I\cap L_J=L_{I\cup J},\qquad |I\cup J|>k,
\]
and this deeper coordinate intersection has already been blown up.  In
the corresponding coordinate blowup, the two strict transforms meet the
exceptional divisor in disjoint coordinate linear subspaces.  The same
coordinate description shows inductively that these strict transforms
are smooth and have normal crossings with the current boundary.
Consequently, every simultaneous subdivision at a fixed codimension is
a blowup along a disjoint union of smooth closed coordinate
intersections.

The construction is symmetric in the local branches.  Inductively,
blowup and strict transform commute with \'etale base change, so the two
pullbacks of the next center ideal to
\(R=V\times_{\mathscr Y}V\) coincide.  Effective \'etale descent produces
the center on the current stack; its smoothness and normal-crossing
position may be checked on \(V\).  This is the groupoid construction of
\cite[proof of Lemma~A.2.6]{Harper}; see also
\cite[Section~5.3, pp.~334--335]{MPS}.  This proves \textup{(i)}.

A cone of \(\operatorname{Bar}(\Sigma)\) is indexed by a strict flag of
faces.  Since its members have distinct dimensions, every automorphism
preserving the flag fixes its barycentric rays and acts trivially on the
corresponding cone.  Thus the subdivided cones embed in the colimit
\cite[Section~5.6, p.~342]{MPS}; compare
\cite[Sections~3.2.1--3.2.3]{HolmesSchwarz}.  Its rays consequently
define global smooth Cartier boundary divisors, and the intersection of
any distinct such divisors is \'etale-locally a disjoint union of
coordinate intersections of the expected codimension.  Splitting these
divisors into their finitely many nonempty connected components gives a
finite labeled SNC presentation, proving \textup{(ii)}.
\end{proof}

\begin{remark}
A second barycentric subdivision would make the cone complex the cone
over a finite simplicial complex, but this is unnecessary here because
the definition of labeled SNC allows intersections to be disconnected
\cite[Section~5.6, p.~342]{MPS}.
\end{remark}

\begin{proof}[Proof of \cref{prop:nc-model-strictification}]
The barycentric modification supplied by
\cref{lem:barycentric-smooth-center-blowups} produces the finite tower
\eqref{eq:strictification-blowup-tower}.  Every arrow is a blowup along
a smooth center whose pullback to a branch-splitting
\'etale chart is a disjoint union of strict transforms of closed
coordinate intersections.  Such a center is contained in the boundary
and has normal crossings with it.
Near the polar locus, the nondegeneracy normal form says that the zero
divisor is either empty or is defined by a parameter transverse to all
boundary coordinates.  Hence the center has normal crossings with
\(\mathscr D_i+\mathscr Z_i\), and no component meeting
\(\mathscr P_i\) is contained in \(\mathscr Z_i\).  It is therefore
boundary-admissible, with all cancellation indices equal to zero.
Applying
\cref{lem:rationally-admissible-blowup-preserves-nc-pair} at each stage
proves that all intermediate tuples are NC rational stack
compactifications of the same open LG model.  This proves
\textup{(i)}.

By \cref{lem:barycentric-smooth-center-blowups}\textup{(ii)}, the
final boundary carries a finite labeled SNC presentation.  Equip
\(\mathscr D_N\) with this presentation.  The final tuple is then
strict, proving \textup{(ii)}.

Finally, these blowups are representable and projective, and their
centers lie in the boundary, so \(\sigma\) is an isomorphism over
\(\mathscr U\).  Morphicity is preserved by pullback; hence a morphic
initial model yields a good final model.  If the initial coarse moduli
space is projective, repeated application of
\cref{lem:coarse-projectivity} proves the same for the final one.
\end{proof}

\subsection{Good resolutions of NC rational models}
\label{subsec:resolution-nc-rational-models}

The indeterminacy locus
\[
 I=(|\mathscr Z|\cap|\mathscr P|)_{\mathrm{red}}
\]
need not be smooth.  At a point where several polar branches meet,
its local form is
\[
 I=(z,x_1\cdots x_\ell)
   =\bigcup_{i=1}^{\ell}(z,x_i).
\]
The whole locus is regularly immersed in codimension two, but it can
be singular.  Hence it is not, in general, a center to which the
smooth blowup comparison of
\cref{thm:adapted-boundary-blowup-levelwise-invariance} applies.  See
\cref{lem:rational-map-indeterminacy-locus}\textup{(i),(iii)}.

We therefore first apply the toroidal strictification of
\cref{prop:nc-model-strictification}.  On the resulting strict model,
each connected labeled polar component \(H\) is a smooth Cartier
divisor.  Each connected component of its intersection with the
transformed zero divisor is then a smooth
codimension-two closed substack with normal crossings with the
boundary; see
\cref{lem:rational-map-indeterminacy-locus}\textup{(ii)}.  We blow up
these smooth zero--pole branches one at a time.  This is the
smooth-variety algorithm of Chen--Yu \cite[\S2.3]{ChenYu}.  Both the
strictification blowups and the zero--pole blowups are
boundary-admissible, so the filtered comparison applies at every
step.

\begin{theorem}[Good resolution of an NC rational model]
\label{thm:filtered-good-resolution}
Let
\((\mathscr X,\mathscr D,\mathscr Z,\mathscr P,w)\)
be an NC rational stack compactification.  There is a factorization
of a representable projective birational morphism
\[
 \mathscr Y\xrightarrow{\ \pi\ }\mathscr X^{\mathrm{str}}
 \xrightarrow{\ \tau\ }\mathscr X,
 \qquad \rho=\tau\circ\pi,
\]
with the following properties.
\begin{enumerate}[label=\textup{(\roman*)}]
\item The morphism \(\tau\) is a toroidal boundary
  strictification, and
  \(\mathscr X^{\mathrm{str}}\) is a strict NC rational stack
  compactification.  If the original model is already strict, one may take
  \(\tau=\id_{\mathscr X}\).
\item Put \(\mathscr X_0=\mathscr X^{\mathrm{str}}\) and
  \(w_0=w\circ\tau\).  The morphism \(\pi\)
  factors as a finite tower
  \begin{equation*}
   \mathscr Y=\mathscr X_N
   \xrightarrow{\ \varpi_{N-1}\ }\mathscr X_{N-1}
   \longrightarrow\cdots\longrightarrow
   \mathscr X_1\xrightarrow{\ \varpi_0\ }
   \mathscr X_0=\mathscr X^{\mathrm{str}}.
  \end{equation*}
  Writing \(w_n=w_0\circ\varpi_0\circ\cdots\circ\varpi_{n-1}\) and
  \(\divisor(w_n)=\mathscr Z_n-\mathscr P_n\), the center at stage
  \(n\) is a connected component
  \(C_n\) of \(\mathscr Z_n\times_{\mathscr X_n}H_n\), for a
  connected polar boundary component \(H_n\).  It is smooth,
  regularly immersed in codimension two, and has normal crossings
  with the labeled boundary.  Every intermediate model is strict NC
  rational, and every
  \(\varpi_n\) is representable, projective, and an isomorphism over
  \(\mathscr U\).  If \(H_n\) has pole order \(e_n\), the exceptional
  divisor has pole order \(e_n-1\), and is nonpolar when \(e_n=1\).
\item Both morphisms are isomorphisms over \(\mathscr U\), and the
  pullback of \(w\) to \(\mathscr Y\) extends to a morphism
  \(w_{\mathscr Y}\colon\mathscr Y\to\mathbb P^1\).  Thus
  \((\mathscr Y,\boldsymbol{\mathscr D}_{\mathscr Y},
  w_{\mathscr Y})\) is a good stack compactification.
\end{enumerate}
If the original model is projective NC rational, then the
strictified model and the final good model are projective.
\end{theorem}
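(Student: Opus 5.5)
First construct \(\tau\) by \cref{prop:nc-model-strictification}, taking \(\tau=\id\) when the model is already strict. That proposition gives a strict NC rational stack compactification \(\mathscr X^{\mathrm{str}}\) of the same open LG model. It also gives representability, projectivity, the isomorphism over \(\mathscr U\), and preservation of coarse projectivity. This settles \textup{(i)}. On \(\mathscr X_0=\mathscr X^{\mathrm{str}}\), every connected labeled polar component \(H\) is a smooth Cartier divisor.

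The tower in \textup{(ii)} is built inductively by the Chen--Yu algorithm. Suppose \(\mathscr X_n\) is strict NC rational and \(I(w_n)\ne\varnothing\). Choose a connected polar labeled component \(H_n\) with \(\mathscr Z_n\cap H_n\neq\varnothing\), and let \(C_n\) be a connected component of \(\mathscr Z_n\times_{\mathscr X_n}H_n\). By \cref{lem:rational-map-indeterminacy-locus}\textup{(ii)}, \(C_n\) is smooth and regularly immersed in codimension two. Locally it has the form \eqref{eq:one-step-chen-yu-local-center}, so it is a boundary-admissible center with \(\varepsilon=1\): it lies in \(H_n\subset|\mathscr D_n|\), it is normal crossing with \(\mathscr D_n+\mathscr Z_n\), and it is contained in \(|\mathscr P_n|\). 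Then \cref{lem:rationally-admissible-blowup-preserves-nc-pair} shows that \(\mathscr X_{n+1}=\Bl_{C_n}\mathscr X_n\) is again NC rational. By \eqref{eq:rationally-admissible-polar-transform}, the pole order of \(E_n\) is \(N-\varepsilon=e_n-1\): only the branch \(H_n\) is polar through \(C_n\), because the local form has exactly one \(x_1\). Strictness is preserved by \cref{lem:smooth-stack-blowup-geometry}\textup{(ii)(a)}, with \(E_n\) added as a new label. Representability, projectivity and the isomorphism over \(\mathscr U\) come from \cref{lem:smooth-stack-blowup-geometry}\textup{(i)} together with \(C_n\subset|\mathscr D_n|\). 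Coarse projectivity is preserved by \cref{lem:coarse-projectivity}.

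The main obstacle is termination. I would use the invariant
\[
 \mu(\mathscr X_n):=\sum_{(H,C)} e_H,
\]
summing over connected polar labeled components \(H\) and connected components \(C\) of \(\mathscr Z_n\times H\). This sum is finite by \cref{lem:rational-map-indeterminacy-locus}\textup{(ii)}. In the charts \(z=x_1 s\) and \(x_1=zt\) we have \(w=s/(x_1^{e-1}\cdots)\) and \(w=1/(z^{e-1}t^e\cdots)\). The strict transform \(\mathscr Z'\) is disjoint from \(H'\) above \(C_n\), by \cref{lem:smooth-stack-blowup-geometry}\textup{(ii)(c)}. It meets the new component \(E_n\), which has pole order \(e_n-1\), in a copy of \(C_n\). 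By the last part of \textup{(ii)(c)}, it meets other polar components \(K\) in \(K\cap\mathscr Z\) unchanged, since \(C_n\not\subset K\). Hence the pair \((H_n,C_n)\) with weight \(e_n\) is replaced by at most one pair with weight \(e_n-1\), or by none when \(e_n=1\), and all other pairs are unchanged. There is one subtlety: \(C_n\) may meet other components \(K\), so that \(\mathscr Z'\cap K'\) changes componentwise, and this has to be checked with the normal-crossing charts. So \(\mu\) strictly decreases, and the tower stops at some \(N\) with \(I(w_N)=\varnothing\).

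At that point \cref{lem:rational-map-indeterminacy-locus}\textup{(i)} shows that \(w_N\) is a morphism. Since \(\mathscr X_N\) is strict, proper, and an isomorphism over \(\mathscr U\), it is a good stack compactification, which gives \textup{(iii)}. Projectivity follows by iterating \cref{lem:coarse-projectivity}. Finally, \(\rho=\tau\circ\pi\) is a composite of representable projective blowups that are isomorphisms over the dense open \(\mathscr U\), so it is birational.
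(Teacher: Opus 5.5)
Your proposal is correct and follows essentially the same route as the paper: strictify via \cref{prop:nc-model-strictification}, then blow up the connected zero--pole components one at a time as boundary-admissible centers with \(\varepsilon=1\), and terminate via the weighted count \(\sum_{(H,C)}e_H\), which the paper shows drops by exactly one at each step. The ``subtlety'' you flag is not an actual issue: as you yourself noted one sentence earlier, the isomorphism \(A'\cap K'\xrightarrow{\sim}A\cap K\) of \cref{lem:smooth-stack-blowup-geometry}\textup{(ii)(c)} for \(C_n\not\subset K\) already shows that the components of \(\mathscr Z_n\cap K\) neither split nor merge, so no further chart check is needed.
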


\begin{proof}
Choose \(\tau\) as in \cref{prop:nc-model-strictification}.  Its
source is strict NC rational; if the original model is already
strict, use the empty strictification.
Put \(w_0=w\circ\tau\).  On each successive model write
\[
 \divisor(w_n)=\mathscr Z_n-\mathscr P_n,
 \qquad
 \mathscr P_n=\sum_{H\in\mathscr A_n}e_n(H)H,
\]
let \(\mathcal I_n\) consist of the pairs \((H,C)\) with
\(e_n(H)>0\) and \(C\) a connected component of
\(\mathscr Z_n\times_{\mathscr X_n}H\), and set
\[
 \beta_n:=\sum_{(H,C)\in\mathcal I_n}e_n(H).
\]

If \(\beta_n>0\), choose \((H_n,C_n)\in\mathcal I_n\) and blow up
\(C_n\).  By
\cref{lem:rational-map-indeterminacy-locus}\textup{(ii)}, this is a
smooth global center of codimension two having normal crossings with
the boundary.  The local form in that lemma and
\cref{lem:smooth-stack-blowup-geometry}\textup{(i),(ii),(iii)} show
that the blowup has all the asserted geometric properties, preserves
the NC rational condition, and gives the exceptional divisor pole
order \(e_n(H_n)-1\).  Moreover,
part~\textup{(ii)(c)} of the blowup lemma shows that the selected
zero--pole intersection is transferred to the unique intersection
with the exceptional divisor \(E_n\).  This gives an exceptional
incidence of weight \(e_n(H_n)-1\) when \(e_n(H_n)>1\), and no polar
incidence when \(e_n(H_n)=1\).  We spell out why all other incidences,
including their connected components, are unchanged.  The strict
transforms of \(\mathscr Z_n\) and \(H_n\) are disjoint, whereas
\(\mathscr Z_{n+1}\cap E_n\to C_n\) is an isomorphism.  If
\(K\ne H_n\) is another polar boundary component, the SNC condition
implies that \(C_n\not\subset K\); otherwise the three local
equations for the zero divisor, \(H_n\), and \(K\) would fail to
have the expected codimension.  The final assertion of
\cref{lem:smooth-stack-blowup-geometry}\textup{(ii)(c)} therefore
identifies
\(\mathscr Z_{n+1}\cap K'\) with
\(\mathscr Z_n\cap K\), so its connected components neither split
nor merge.  Thus the selected summand of weight \(e_n(H_n)\) is
replaced by one of weight \(e_n(H_n)-1\) if this is positive, and
otherwise disappears; every other summand of \(\beta_n\) is
preserved.  Hence
\[
 \beta_{n+1}
 =\beta_n-e_n(H_n)+\bigl(e_n(H_n)-1\bigr)
 =\beta_n-1.
\]
The construction therefore stops after exactly \(N=\beta_0\) steps.

On the final model \(\beta_N=0\), equivalently
\(|\mathscr Z_N|\cap|\mathscr P_N|=\varnothing\).  Thus
\cref{lem:rational-map-indeterminacy-locus}\textup{(i)} shows that the
pullback of \(w\) extends to a morphism
\(\mathscr Y\to\mathbb P^1\), so the final model is good.
Representability, projectivity, and preservation of
\(\mathscr U\) follow from the two construction steps.  Under the
projective hypothesis, projectivity of the strictification and
repeated application of \cref{lem:coarse-projectivity} give the final
geometric assertion.
\end{proof}

Thus the only stack-specific global issue is the monodromy of boundary
branches; strictification removes it.  Once the centers are defined on
the stack, both termination and filtered comparison are supplied by
the preceding geometric and elementary-modification results.

\subsection{Existence of good compactifications}
\label{subsec:existence-good-compactifications}

\begin{proposition}\label{prop:good-stack-compactification-existence}
Let \((\mathscr U,w)\) be a smooth separated Deligne--Mumford
Landau--Ginzburg model.
\begin{enumerate}[label=\textup{(\roman*)}]
\item The model \((\mathscr U,w)\) admits a good stack
  compactification.
\item If the coarse moduli space of \(\mathscr U\) is a
  quasi-projective scheme, then the compactification can be chosen
  projective good.
\end{enumerate}
\end{proposition}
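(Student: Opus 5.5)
The plan is to produce a proper smooth stack containing \(\mathscr U\) with NC boundary, arrange the extension of \(w\) to be NC rational (in fact morphic), and then invoke \cref{thm:filtered-good-resolution} together with \cref{prop:nc-model-strictification} to make it strict and morphic.

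\emph{Step 1: preliminary compactification.} For (i), apply Rydh's compactification theorem \cite[Theorem~F]{RydhCompactification}. It gives a proper Deligne--Mumford stack \(\overline{\mathscr X}\) containing \(\mathscr U\) as a dense open substack. For (ii), apply Kresch's results \cite[Theorems~4.4 and~5.3]{KreschGeometry} instead. These give a quotient-type stack with projective coarse space \(\overline X\) containing \(\mathscr U\) densely, using that the coarse space of \(\mathscr U\) is quasi-projective. In both cases, replace \(\overline{\mathscr X}\) by the closure of the graph of \(w\) in \(\overline{\mathscr X}\times\mathbb P^1\). This is a proper (respectively projective over the coarse space) representable modification that is the identity over \(\mathscr U\), and on it \(w\) extends to a morphism to \(\mathbb P^1\). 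Projectivity of the coarse space is retained by \cref{lem:coarse-projectivity}.

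\emph{Step 2: resolution.} Apply functorial embedded resolution of singularities, e.g. Temkin or Bierstone--Milman, in its stack form via smooth-local functoriality. First resolve \(\overline{\mathscr X}\) itself by blowups centered over the complement of \(\mathscr U\); this is possible because \(\mathscr U\) is already smooth. Then principalize the ideal of the reduced boundary together with the ideals \(w^*(\infty)\) and \(w^*(0)\) restricted near the boundary. The result is a smooth proper stack \(\mathscr X\) with \(\mathscr D=(\mathscr X\setminus\mathscr U)_{\mathrm{red}}\) NC, and \(w\colon\mathscr X\to\mathbb P^1\) still a morphism. Functoriality for smooth (in particular \'etale) morphisms is exactly what lets the algorithm descend along an \'etale atlas. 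The blowups are representable and projective, so projectivity in case (ii) propagates again by \cref{lem:coarse-projectivity}. Because \(w\) is morphic, polar nondegeneracy is automatic by \cref{def:nc-rational-stack-pair}(b), and the tuple is a morphic NC rational stack compactification.

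\emph{Step 3: strictification.} Apply \cref{prop:nc-model-strictification}. It preserves morphicity and projectivity and produces a labeled SNC boundary, hence a good (respectively projective good) stack compactification.

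\emph{Main obstacle.} I expect the main difficulty in Step 1, case (ii). The issue is guaranteeing that the compactification is again a Deligne--Mumford stack, rather than merely an algebraic stack, while keeping the coarse space projective and \(\mathscr U\) open. Kresch's theorems present a stack with quasi-projective coarse space as a quotient \([V/\mathrm{GL}_n]\) and embed it into a projective-coarse quotient. I would take the closure there and then shrink to the Deligne--Mumford locus, i.e.\ the finite-stabilizer locus. If that locus fails to be proper, I would first apply Kirwan-type partial desingularization or a stacky blowup of the locus of positive-dimensional stabilizers, away from \(\mathscr U\), and only then resolve as in Step 2.
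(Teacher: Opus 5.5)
Your proposal is correct and follows essentially the same route as the paper: Rydh's Theorem~F (or Kresch's Theorems~4.4 and~5.3 in case~(ii)), graph closure in the product with \(\mathbb P^1\), functorial stack desingularization with boundary (the paper uses Temkin's Theorem~1.1.13(i)), and then \cref{prop:nc-model-strictification}. Two small remarks. First, Kresch's Theorem~5.3, applied once Theorem~4.4 has shown that \(\mathscr U\) is a quotient stack, already gives a locally closed immersion into a smooth proper Deligne--Mumford stack with projective coarse space, so the obstacle you anticipate does not arise. Second, principalizing \(w^*(0)\) is unnecessary, since morphicity makes polar nondegeneracy automatic, and if taken literally it could force centers that meet \(\mathscr U\).
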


\begin{proof}
We construct the compactification in four stages.

\smallskip
\noindent
\emph{Step 1: a proper ambient stack.}
Over \(\C\), every Deligne--Mumford stack is strictly tame.  The
compactification theorem
\cite[Theorem~F]{RydhCompactification} therefore gives an open
immersion
\[
 \iota\colon\mathscr U\hookrightarrow\mathscr W_0
\]
into a proper Deligne--Mumford stack.  Replacing \(\mathscr W_0\)
by the scheme-theoretic closure of \(\mathscr U\) removes any
component disjoint from the open and makes \(\mathscr U\) dense in
every component.

Under the additional hypothesis in \textup{(ii)}, we use instead
\cite[Theorems~4.4 and~5.3]{KreschGeometry}.  The hypotheses of the
two results should be checked in this order.  Since \(\mathscr U\) is
smooth and separated, is generically tame, and has quasi-projective
coarse space, Theorem~4.4 first shows that \(\mathscr U\) is a
quotient stack.  Theorem~5.3 then applies to this quotient stack and
gives a locally closed immersion of \(\mathscr U\) into a smooth
proper Deligne--Mumford stack \(\mathscr W\) with projective coarse
moduli space.  Take \(\mathscr W_0\) to be the scheme-theoretic closure of
\(\mathscr U\) in \(\mathscr W\).  Then \(\mathscr U\) is open and
componentwise dense in \(\mathscr W_0\), and
\cref{lem:coarse-projectivity}, applied to the closed immersion into
\(\mathscr W\), shows that \(\mathscr W_0\) has projective coarse
moduli space.  Thus the remainder of the construction is common to
the two cases, with projectivity retained in case~\textup{(ii)}.

\smallskip
\noindent
\emph{Step 2: make the rational function into a morphism.}
Consider the graph immersion
\[
 (\iota,[w:1])\colon
 \mathscr U\longrightarrow\mathscr W_0\times\mathbb P^1
\]
and let \(\mathscr G\) be its scheme-theoretic closure.  Then
\(\mathscr G\) is a proper Deligne--Mumford stack and the graph
identifies \(\mathscr U\) with a dense open substack of every
component of \(\mathscr G\).  The second projection restricts to a
morphism
\[
 q\colon\mathscr G\longrightarrow\mathbb P^1
\]
whose restriction to \(\mathscr U\) is \([w:1]\).  Thus the graph
closure resolves the indeterminacy at the level of a possibly
singular compactification.

We may and do replace \(\mathscr G\) by its reduction.  This does not
change the open substack \(\mathscr U\), the morphism \(q\), or
properness, and it places the stack directly in the reduced
characteristic-zero category of the resolution theorem used below.

The closed immersion
\(\mathscr G\hookrightarrow\mathscr W_0\times\mathbb P^1\) is
representable and projective.  In case~\textup{(ii)}, its target has
projective coarse space, so \cref{lem:coarse-projectivity} shows that
the coarse space of \(\mathscr G\) is projective.  Notice that no
assertion about smoothness or the boundary has yet been made.

\smallskip
\noindent
\emph{Step 3: functorial stack desingularization with boundary.}
Let
\[
 \mathscr B:=(\mathscr G\setminus\mathscr U)_{\mathrm{red}}.
\]
Temkin's functorial desingularization with boundary
\cite[Theorem~1.1.11]{TemkinEmbedded} gives, for schemes, a finite
\((\mathscr B\cup\mathscr G_{\mathrm{sing}})\)-supported sequence
whose source is regular and on which the total transform of
\(\mathscr B\) is strictly monomial.  This is the weak
principalization/desingularization conclusion needed here; the
cited theorem does not assert that every center in the sequence is
smooth.  More importantly for the
present application, the stack extension is part of the theorem:
\cite[Theorem~1.1.13(i)]{TemkinEmbedded} states that this functor
induces the analogous blowup functor on quasi-compact
quasi-excellent algebraic stacks.  Thus no additional atlaswise
choice of centers is involved.

Apply that stack functor to \((\mathscr G,\mathscr B)\).  Since
\(\mathscr U\) is smooth,
\(\mathscr G_{\mathrm{sing}}\subset\mathscr B\).  We obtain a finite
sequence of representable projective blowups
\[
 \rho_0\colon\mathscr X_0\longrightarrow\mathscr G
\]
supported over \(\mathscr B\), hence an isomorphism over
\(\mathscr U\), such that \(\mathscr X_0\) is smooth and
\[
 \mathscr D_0
 :=(\mathscr X_0\setminus\mathscr U)_{\mathrm{red}}
\]
is NC.  Concretely, strict monomiality on each scheme chart says
that \(\mathscr D_0\) is a union of coordinate hyperplanes there.
We only claim NC at this point: the stack relation may still identify
or permute those local branches, which is why Step~4 remains
necessary.

The composite
\[
 w_0:=q\circ\rho_0
 \colon\mathscr X_0\longrightarrow\mathbb P^1
\]
is a morphism extending \(w\).  Every blowup used above is
representable and projective.  Hence \(\mathscr X_0\) remains
proper.  In case~\textup{(ii)}, repeated application of
\cref{lem:coarse-projectivity}, starting from the projective coarse
space of \(\mathscr G\), shows that the coarse space of
\(\mathscr X_0\) is projective.  We have therefore produced a
morphic NC model, projective under the additional hypothesis.

\smallskip
\noindent
\emph{Step 4: a finite labeled SNC presentation of the boundary.}
The NC divisor \(\mathscr D_0\) can still have self-intersections or
stabilizer monodromy, so the preceding step does not yet give a
good compactification in the sense of
\cref{def:nc-rational-stack-pair}.  Apply
\cref{prop:nc-model-strictification} to
\((\mathscr X_0,\mathscr D_0,w_0)\).  Its centers lie in the
boundary, hence the strictification remains an isomorphism over
\(\mathscr U\); the morphism to \(\mathbb P^1\) is preserved by
composition, and the final boundary is finite labeled SNC.  Since
strictification preserves projectivity by the final assertion of
\cref{prop:nc-model-strictification}, the result is a good stack
compactification in general and a projective good stack
compactification in case~\textup{(ii)}.
\end{proof}

The general existence assertion in part~\textup{(i)} uses Rydh's
compactification theorem in its preliminary version
\cite[Theorem~F]{RydhCompactification} and is logically separate from
compactification independence.  Part~\textup{(ii)} instead starts from
Kresch's published compactification results
\cite[Theorems~4.4 and~5.3]{KreschGeometry}.

 \subsection{Relative factorization of good models}
\label{subsec:stacky-weak-factorization-relative-open}

The second global input is weak factorization relative to the common
open model.  Its intermediate stacks are proper good models, but
their coarse spaces need not be projective; the elementary filtered
comparisons require no such projectivity.

\begin{theorem}[Relative stacky weak factorization]
\label{thm:relative-stacky-weak-factorization}
Let
\((\mathscr X_1,\boldsymbol{\mathscr D}_1,w_1)\) and
\((\mathscr X_2,\boldsymbol{\mathscr D}_2,w_2)\) be two good stack
compactifications of the same smooth LG model \((\mathscr U,w)\).
There is a finite zigzag
\begin{equation*}
 \mathscr X_1=\mathscr Y_0
 \dashrightarrow\mathscr Y_1\dashrightarrow\cdots
 \dashrightarrow\mathscr Y_N=\mathscr X_2
\end{equation*}
such that:
\begin{enumerate}[label=\textup{(\roman*)}]
\item every \(\mathscr Y_i\) is smooth and proper, contains
  \(\mathscr U\) as a componentwise dense open, and its complement
  has a finite labeled SNC presentation;
\item one orientation of every edge is an elementary admissible
  modification, proper and equal to the identity over
  \(\mathscr U\);
\item there is an index \(i_0\) such that the induced birational map
  \(\mathscr Y_i\dashrightarrow\mathscr X_1\) is a morphism for
  \(i\leq i_0\), while
  \(\mathscr Y_i\dashrightarrow\mathscr X_2\) is a morphism for
  \(i\geq i_0\);
\item the potential extends uniquely to compatible morphisms
  \(w_i\colon\mathscr Y_i\to\mathbb P^1\).  Hence every
  \((\mathscr Y_i,\mathscr Y_i\setminus\mathscr U,w_i)\) is a good
  stack compactification.
\end{enumerate}
\end{theorem}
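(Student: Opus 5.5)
The plan is to deduce the statement from the general stacky weak factorization theorem of Bergh--Rydh \cite[Theorem~D]{BerghRydh}, applied relative to the common open \(\mathscr U\). The potential plays no role in the construction of the zigzag; it is transported at the end.

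\emph{Setup.} First form a common dominating model. Take the scheme-theoretic closure \(\mathscr G\) of the diagonal image of \(\mathscr U\) in \(\mathscr X_1\times\mathscr X_2\). This is proper, and its projections to \(\mathscr X_1\) and \(\mathscr X_2\) are representable, proper and birational, each the identity over \(\mathscr U\). Apply Temkin's functorial stack desingularization with boundary \(\mathscr G\setminus\mathscr U\), as in Step~3 of \cref{prop:good-stack-compactification-existence}, and then the strictification of \cref{prop:nc-model-strictification}. The result is a smooth proper \(\mathscr X_3\) with a labeled SNC complement and proper birational morphisms \(f_k\colon\mathscr X_3\to\mathscr X_k\) that are isomorphisms over \(\mathscr U\).

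\emph{Factorization.} I would then invoke \cite[Theorem~D]{BerghRydh} for each pair \((\mathscr X_3,\boldsymbol{\mathscr D}_3)\to(\mathscr X_k,\boldsymbol{\mathscr D}_k)\). Here the labels are ordered as Bergh--Rydh require, and the birational morphism is an isomorphism on the complement of the SNC divisors, i.e.\ over \(\mathscr U\). The output is a zigzag of stacky blowups in smooth centers having normal crossings with the labeled boundary and lying inside it; these are ordinary smooth blowups or roots along boundary components, all isomorphisms over \(\mathscr U\). In the relative form of their theorem, the intermediate stacks are smooth and proper with SNC boundary complementary to \(\mathscr U\). Concatenating the zigzag from \(\mathscr X_1\) to \(\mathscr X_3\) with the one from \(\mathscr X_3\) to \(\mathscr X_2\) gives (i) and (ii). For (iii) I would use the domination clause of weak factorization: intermediate stacks map to the source or target. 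Composing with \(f_1\) or \(f_2\) then gives morphisms to \(\mathscr X_1\) for \(i\le i_0\) and to \(\mathscr X_2\) for \(i\ge i_0\), where \(i_0\) is the index of \(\mathscr X_3\). If Theorem~D gives only a weak domination statement, I would replace it by a direct argument: \(i_0\) is the position of \(\mathscr X_3\), and the intermediate models of each half dominate \(\mathscr X_k\) by the "projective over the base" clause of Bergh--Rydh.

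\emph{Potential.} With (iii) available, \(w_i=w_1\circ(\mathscr Y_i\to\mathscr X_1)\) for \(i\le i_0\), and \(w_i=w_2\circ(\mathscr Y_i\to\mathscr X_2)\) for \(i\ge i_0\). These are morphisms to \(\mathbb P^1\). Uniqueness and compatibility hold because \(\mathscr U\) is componentwise dense and \(\mathbb P^1\) is separated; the two definitions at \(i_0\) agree for the same reason. Each edge then satisfies \(w'=\pi^*w\), and since the potentials are morphic, the centers are boundary-admissible by the last sentence of \cref{def:boundary-admissible-center}. Root modifications are elementary by definition. This proves (iv) and confirms that every edge is an elementary admissible modification in the sense of \cref{def:elementary-admissible-modification}.

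The main obstacle is matching hypotheses and conclusions with Bergh--Rydh. Specifically, I must check that their "stacky blowups" in the relative setting consist only of smooth boundary blowups and roots along \emph{labeled} boundary components. I must also check that the centers lie in the boundary rather than merely having normal crossings with it, which should follow from being an isomorphism over \(\mathscr U\). Finally, I must recover the domination property (iii). I would first verify these against the statement of \cite[Theorem~D]{BerghRydh} and, where it is phrased for SNC pairs with unlabeled divisors, relabel by connected components as in \cref{def:stack-nc-snc-divisors}.
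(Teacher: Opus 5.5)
Your skeleton matches the paper's proof: Theorem~D of Bergh--Rydh, then extension of $w$ using the turning index, density of $\mathscr U$, and separatedness of $\mathbb P^1$. However, your detour through a common smooth model $\mathscr X_3$ fails at its first step. The morphism $\mathscr U\to\mathscr X_1\times\mathscr X_2$ factors through the diagonal $\mathscr U\to\mathscr U\times\mathscr U$. For a Deligne--Mumford stack with nontrivial stabilizers this diagonal is finite and unramified but not an immersion. So the scheme-theoretic closure of the ``diagonal image'' is not a modification of $\mathscr U$. For $\mathscr U=\mathscr X_1=\mathscr X_2=BG$ the closure is $B(G\times G)$. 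In general, if $\mathscr U$ has generic stabilizer $H$, the closure has generic stabilizer $H\times H$. This is exactly the situation of every twisted sector $\mathscr U_\nu$ with $g\neq1$, since all its stabilizers contain $g$, and these are the cases the theorem is used for.

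Hence your projections are not the identity over $\mathscr U$. Desingularizing and strictifying this closure does not produce an $\mathscr X_3$ that contains $\mathscr U$ and dominates both $\mathscr X_k$. The projections also need not be representable, for instance when $\mathscr X_2$ is a root of $\mathscr X_1$.

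You can repair this by replacing the closure with the relative normalization of $\mathscr X_1\times\mathscr X_2$ in the representable, quasi-finite, separated morphism $\mathscr U\to\mathscr X_1\times\mathscr X_2$. By Zariski's main theorem this contains $\mathscr U$ as a dense open and is finite over the product. But the detour is unnecessary. The paper uses the graph only to certify that the birational map $\mathscr X_1\dashrightarrow\mathscr X_2$ is proper. It then applies Theorem~D once to that map and reads off \textup{(iii)} from the turning index.

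The second point is the one you list as your main obstacle, and it is the step the paper actually carries out. Theorem~D does not output ordinary blowups and roots as separate edges. Its edges are stacky blowups of weight $r$: the ordinary blowup of a smooth center lying in the boundary with normal crossings, followed by the $r$-th root along the exceptional divisor. You must refine each such edge by inserting the intermediate ordinary blowup $\mathscr Y'$, and then check that the refined zigzag still satisfies \textup{(i)}--\textup{(iii)}:
\begin{itemize}
\item the strict transforms of the labeled members, together with the connected components of the exceptional divisor, give a finite labeled SNC presentation on $\mathscr Y'$;
\item $\mathscr Y'$ maps to the base of the stacky blowup, hence to the same endpoint, so it is proper and the turning index survives reindexing;
\item the remaining root is along a labeled boundary divisor of $\mathscr Y'$, as \cref{def:elementary-admissible-modification}\textup{(b)} requires.
\end{itemize}
With this refinement, the rest of your argument coincides with the paper's. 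That includes automatic boundary-admissibility of the blowup centers for morphic potentials, and uniqueness and compatibility of the $w_i$.
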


\begin{proof}
Choose an ordering of each finite label set.  Then
\((\mathscr X_1,\boldsymbol{\mathscr D}_1)\) and
\((\mathscr X_2,\boldsymbol{\mathscr D}_2)\) are standard pairs in
the sense of \cite[Definitions~3.1 and~3.5]{BerghRydh}.  The closure
of the diagonal copy of \(\mathscr U\) in
\(\mathscr X_1\times\mathscr X_2\), and hence its normalization, has
proper projections to both factors.  Thus the induced birational map
is proper in the graph sense and is the identity over \(\mathscr U\).
Stacky weak
factorization gives a zigzag by stacky blowups and blowdowns, all
supported in the boundary, together with the SNC and turning-index
properties
\cite[Theorem~D]{BerghRydh}; see also
\cite[Theorem~1.1]{Harper}.  The published global-quotient case is
\cite[Theorem~1.2]{BerghWF}.

By definition, a stacky blowup of weight \(r\) is the ordinary
blowup of its smooth center followed by the \(r\)-th root along the
exceptional divisor \cite[Definition~3.1]{BerghWF}.  Refine the
zigzag by inserting this intermediate ordinary blowup for every
edge; when \(r=1\), there is no nontrivial root step.  The center is
contained in the boundary and has normal crossings with it, so the
strict transforms of the old labeled members, together with the
exceptional divisor, give a finite labeled SNC presentation on each
new model.  Ordinary blowups and roots are proper and are the
identity over \(\mathscr U\).  After reindexing, the turning-index
property is unchanged.  It also shows that every intermediate model
maps properly to one of the proper endpoints and hence is proper.

Use the turning-index morphisms to compose the models on the two
sides with \(w_1\) and \(w_2\), respectively.  On the turning model
the two resulting maps to \(\mathbb P^1\) agree over \(\mathscr U\).
They therefore agree everywhere: the source is reduced,
\(\mathscr U\) is componentwise dense, and \(\mathbb P^1\) is
separated.  The same argument along every edge gives compatible
morphisms
\(w_i\colon\mathscr Y_i\to\mathbb P^1\).

It remains only to identify the refined edges with the operations of
\cref{def:elementary-admissible-modification}.  For an ordinary
blowup edge, the potential on its target is morphic and its center is
a smooth boundary center having normal crossings with the boundary;
hence the remaining conditions in
\cref{def:boundary-admissible-center} are automatic.  The root edge
is taken along its labeled exceptional boundary divisor.  Thus each
refined edge, in one orientation, is an elementary admissible
modification, and every intermediate triple is a good stack
compactification.  The chosen ordering was used only to invoke weak
factorization and is not part of the resulting compactification data.
\end{proof}

\subsection{Proof of the main theorem}
\label{subsec:proof-main-nc-rational-independence}

\begin{theorem}[Compactification independence]
\label{thm:detailed-nc-rational-compactification-comparison}
Let \((\mathscr U,w)\) be a smooth separated Deligne--Mumford
Landau--Ginzburg model of finite type over \(\mathbb C\), and let
\(\mathscr X_1\) and \(\mathscr X_2\) be two NC rational stack
compactifications.  Then, for every \(k\in\mathbb Z\) and
\(\lambda\in\mathbb Q\),
\begin{equation*}
 F_{\mathrm{irr},\mathscr X_1}^{\lambda}H^k(\mathscr U,w)
 =
 F_{\mathrm{irr},\mathscr X_2}^{\lambda}H^k(\mathscr U,w)
\end{equation*}
as subspaces of \(H^k(\mathscr U,w)\).  In particular, their common
value depends only on \((\mathscr U,w)\).
\end{theorem}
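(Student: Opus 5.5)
The proof reduces everything to the elementary filtered comparisons already established. Throughout, a dominant modification \(\pi\) is called \emph{filtered-invariant} if \(\Phi_{\pi,\lambda}\) is a quasi-isomorphism for every \(\lambda\in\Q\). By \cref{cor:levelwise-images-from-total-comparison}, such a \(\pi\) gives
\[
 F_{\mathrm{Yu},\mathscr X}^\lambda H^k(\mathscr U,w)
 =F_{\mathrm{Yu},\mathscr X'}^\lambda H^k(\mathscr U,w)
\]
under the common-open identification. By the composition rule in \cref{prop:kontsevich-comparison-functoriality}\textup{(i)}, a composite of filtered-invariant modifications is again filtered-invariant. Alternatively, one can simply chain the resulting equalities of subspaces.

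\emph{Step 1: reduce to good compactifications.} For \(i=1,2\), apply \cref{thm:filtered-good-resolution} to \(\mathscr X_i\). This gives \(\rho_i=\tau_i\circ\pi_i\colon\mathscr Y_i\to\mathscr X_i\), with \(\mathscr Y_i\) a good stack compactification of \((\mathscr U,w)\).

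Every step of \(\tau_i\) is a blowup along a boundary-admissible center with zero cancellation indices, by \cref{prop:nc-model-strictification}\textup{(i)}. Every step of \(\pi_i\) blows up a smooth zero--pole branch \(C_n\) as in \cref{lem:rational-map-indeterminacy-locus}\textup{(ii)}. Such a \(C_n\) is boundary-admissible: it lies in \(H_n\subset|\mathscr D|\), it has normal crossings with \(\mathscr D+\mathscr Z\) by the local form \eqref{eq:one-step-chen-yu-local-center}, and it lies in \(|\mathscr P|\), so condition \eqref{eq:rationally-admissible-cancellation-condition} holds.

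Hence \cref{thm:adapted-boundary-blowup-levelwise-invariance} applies at each stage. Each intermediate blowup is the identity on \(\mathscr U\). Chaining the image equalities gives
\[
 F_{\mathrm{Yu},\mathscr X_i}^\lambda=F_{\mathrm{Yu},\mathscr Y_i}^\lambda .
\]

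\emph{Step 2: compare good models.} Apply \cref{thm:relative-stacky-weak-factorization} to \(\mathscr Y_1\) and \(\mathscr Y_2\). This yields a zigzag of good stack compactifications. Each edge, in one orientation, is either a boundary-admissible blowup or a root along a labeled boundary divisor, is proper, and is the identity over \(\mathscr U\), with compatible potentials.

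For a blowup edge, use \cref{thm:adapted-boundary-blowup-levelwise-invariance}. For a root edge, use \cref{thm:boundary-root-filtered-invariance}, where \(\mathscr Z_{\mathscr Y}\) and \(\mathscr P_{\mathscr Y}\) are the actual zero and pole divisors by \cref{lem:boundary-root-pullback-rounding}. In either case the two image subspaces in \(H^k(\mathscr U,w)\) coincide. Equality of subspaces is symmetric, so the orientation of the edge is irrelevant. Walking along the zigzag gives \(F_{\mathrm{Yu},\mathscr Y_1}^\lambda=F_{\mathrm{Yu},\mathscr Y_2}^\lambda\), and together with Step 1 this proves the theorem.

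\emph{Main point of care.} The delicate issue is bookkeeping, not geometry. All identifications of \(H^k(\mathscr U,w)\) must be the common-open ones. Concretely, the map \(\Psi_\pi\) in the square \eqref{eq:yu-total-compatibility-square} must induce the identity on \(H^k(\mathscr U,w)\) for every edge. This holds because \(g=\id\) and the identification of \cref{prop:stack-logarithmic-comparison} is restriction to \(\mathscr U\).

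One must also check that the data on each intermediate model in the zigzag agree with the transformed data prescribed in \cref{thm:adapted-boundary-blowup-levelwise-invariance} and \cref{thm:boundary-root-filtered-invariance}. Both are determined by \(w_i\) together with the reduced boundary. For morphic models no cancellation occurs, and a root pulls back the divisors exactly, so the agreement is automatic.
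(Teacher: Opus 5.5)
Your proposal is correct and follows essentially the same route as the paper. You first resolve each \(\mathscr X_i\) by \cref{thm:filtered-good-resolution}, whose strictification and zero--pole steps are boundary-admissible blowups handled by \cref{thm:adapted-boundary-blowup-levelwise-invariance} and \cref{cor:levelwise-images-from-total-comparison}. You then join the two good models by relative stacky weak factorization and apply the root or blowup comparison edge by edge inside the common space \(H^k(\mathscr U,w)\). Your added checks (boundary admissibility of the zero--pole centers, the common-open identification through \(\Psi_\pi\), and agreement of the transformed divisors on the zigzag models) are the points the paper uses implicitly or takes from the preceding results.
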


\begin{proof}
For \(i=1,2\), choose a good resolution
\(\rho_i\colon\mathscr Y_i\to\mathscr X_i\) as in
\cref{thm:filtered-good-resolution}.  Its strictification and
zero--pole steps are boundary-admissible blowups.  Hence
\cref{thm:adapted-boundary-blowup-levelwise-invariance} and
\cref{cor:levelwise-images-from-total-comparison} identify, at every
rational level, the image filtration defined by \(\mathscr X_i\)
with that defined by \(\mathscr Y_i\).

Join \(\mathscr Y_1\) and \(\mathscr Y_2\) by the zigzag of
\cref{thm:relative-stacky-weak-factorization}.  In one orientation,
each edge is either a boundary root or a boundary-admissible blowup.
Apply \cref{thm:boundary-root-filtered-invariance} or
\cref{thm:adapted-boundary-blowup-levelwise-invariance}, followed by
\cref{cor:levelwise-images-from-total-comparison}, to each edge.
Every edge is the identity over \(\mathscr U\), so all these
equalities take place inside the same vector space
\(H^k(\mathscr U,w)\).  Composing them along the two resolutions and
the zigzag proves the asserted equality.  Since it is an equality of
subspaces, it is independent of all choices.
\end{proof}

\begin{corollary}[Yu spectral-sequence invariance]
\label{cor:compactification-invariance-yu-spectral-sequences}
In the setting of
\cref{thm:detailed-nc-rational-compactification-comparison}, write
\(w_i\) for the rational extension of \(w\) to \(\mathscr X_i\).
Fix \(\alpha\in\Q\cap[0,1)\), and put
\begin{equation*}
 F_{\alpha,i}^p
 :=F_{\mathrm{irr}}^{p-\alpha}
   \mathcal C_{\mathscr X_i,w_i}^\bullet,
 \qquad p\in\mathbb Z.
\end{equation*}
Then there is an isomorphism, from the \(E_1\)-page onward, between
the hypercohomology spectral sequences
\begin{equation*}
 E_1^{p,q}(\mathscr X_i,\alpha)
 =
 \mathbb H^{p+q}\!\left(
  \mathscr X_i,
  F_{\alpha,i}^p/F_{\alpha,i}^{p+1}
 \right)
 \Longrightarrow
 \mathbb H^{p+q}\!\left(
  \mathscr X_i,
  \mathcal C_{\mathscr X_i,w_i}^\bullet
 \right),
 \qquad i=1,2.
\end{equation*}
The isomorphism may depend on the choices of resolutions,
factorization, and inverse edge isomorphisms.  In particular, the
spectral sequence for \(\mathscr X_1\) degenerates at \(E_1\) if and
only if the spectral sequence for \(\mathscr X_2\) does.
\end{corollary}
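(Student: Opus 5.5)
We follow the same chain of models as in the proof of \cref{thm:detailed-nc-rational-compactification-comparison}, but now keep track of the filtered complexes themselves instead of only their image subspaces. First, for each \(i\), take the good resolution \(\rho_i\colon\mathscr Y_i\to\mathscr X_i\) of \cref{thm:filtered-good-resolution}. Then join \(\mathscr Y_1\) and \(\mathscr Y_2\) by the zigzag of \cref{thm:relative-stacky-weak-factorization}. Every edge in the resulting chain, in one orientation, is a boundary-admissible blowup or a boundary root. So it is a proper modification \(\pi\colon\mathscr X'\to\mathscr X\) to which \cref{thm:adapted-boundary-blowup-levelwise-invariance} or \cref{thm:boundary-root-filtered-invariance} applies. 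Both theorems assert that \(R\pi_*Q_\alpha^a=0\) for every \(a\) and \(\alpha\). Hence \cref{prop:acyclic-defect-descent-criterion} gives, with \(\alpha\) fixed, the following isomorphism of integer-filtered coherent Yu models in the filtered derived category:
\[
 \bigl(\mathcal C^\bullet_{\mathscr X,w},F_\alpha\bigr)
 \xrightarrow{\ \sim\ }
 R\pi_*\bigl(\mathcal C^\bullet_{\mathscr X',w'},F_\alpha\bigr).
\]
This isomorphism is compatible with the transition maps by \cref{prop:kontsevich-comparison-functoriality}\textup{(ii)}. It induces quasi-isomorphisms on the successive quotients \(F^p_\alpha/F^{p+1}_\alpha\).

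Next, apply \(R\Gamma(\mathscr X,-)\) and use \(R\Gamma(\mathscr X,R\pi_*-)=R\Gamma(\mathscr X',-)\). This yields a morphism of filtered complexes of vector spaces that is a quasi-isomorphism on every graded piece. The filtration is finite, because \(F^p_\alpha\mathcal C^\bullet\) equals all of \(\mathcal C^\bullet\) for \(p\le 0\) and vanishes for \(p>\dim\). The standard comparison theorem for spectral sequences of filtered complexes then applies. A filtered map inducing isomorphisms on \(E_1=\mathbb H(\mathrm{gr})\) induces isomorphisms on all later pages and on the abutments, compatibly with the differentials. Thus each edge gives an isomorphism of the spectral sequences \(E_r(\mathscr X,\alpha)\cong E_r(\mathscr X',\alpha)\) for \(r\ge1\). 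Each is realized in the direction \(\mathscr X\to\mathscr X'\) of the modification.

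Finally, compose along the chain \(\mathscr X_1\leftarrow\mathscr Y_1\leftrightarrow\cdots\leftrightarrow\mathscr Y_2\to\mathscr X_2\), inverting the edge isomorphisms wherever the orientation points the other way. This is why the resulting isomorphism may depend on the choices. An isomorphism of spectral sequences from \(E_1\) onward preserves degeneration at \(E_1\), since degeneration means \(d_r=0\) for all \(r\ge1\). This gives the last assertion.

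The main obstacle is the bookkeeping between the roof definition of \(\Phi_{\pi,\lambda}\) and an honest filtered morphism. The comparison is built from the truncated Kontsevich complexes \(\mathcal A^{p,\alpha}\), not from \(\mathcal C^\bullet\) directly. To get a genuine filtered map, I would replace \((\mathcal C^\bullet,F_\alpha)\) by the Kontsevich complex \(\Omega^\bullet_{w}(\alpha)\) with its stupid filtration. \cref{prop:local-kontsevich-yu-package}\textup{(ii),(iv)} identifies the two levelwise and compatibly for fixed \(\alpha\), so they are filtered quasi-isomorphic. Kontsevich pullback \(\beta_{\pi,\alpha}\) is then strictly filtered, and the \(E_1\) argument above applies to it directly.
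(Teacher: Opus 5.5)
Your proposal is correct and follows the paper's own argument. You use the fixed-\(\alpha\) filtered comparisons of \cref{thm:boundary-root-filtered-invariance} and \cref{thm:adapted-boundary-blowup-levelwise-invariance} along the good resolutions and the weak-factorization zigzag, take derived global sections, invert backward edges, and apply the \(E_1\) comparison theorem. Your device of passing to the stupid-filtered Kontsevich complex, where \(\beta_{\pi,\alpha}\) is an honest filtered map, is exactly how \cref{prop:acyclic-defect-descent-criterion} justifies the spectral-sequence identification and transports it to the coherent Yu models.
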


\begin{proof}
Choose the good resolutions and relative stacky weak factorization
used in the proof of
\cref{thm:detailed-nc-rational-compactification-comparison}.  For
each resolution or factorization edge, the fixed-\(\alpha\) forms of
\cref{thm:boundary-root-filtered-invariance} and
\cref{thm:adapted-boundary-blowup-levelwise-invariance} identify the
coherent integer-filtered models and their successive quotients.
After taking derived global sections, invert the comparison for a
backward edge and compose along the zigzag.  The resulting filtered
comparison induces an isomorphism of the two spectral sequences from
\(E_1\) onward.  Its construction accounts for the stated dependence
on choices, and the final assertion follows immediately.
\end{proof}

\begin{corollary}[Orbifold compactification independence]
\label{cor:orbifold-irregular-hodge-independence}
Let \((\mathscr U,w)\) be a smooth separated Deligne--Mumford
Landau--Ginzburg model, and write
\(I\mathscr U=\coprod_{\nu\in\Lambda}\mathscr U_\nu\), with ages
\(a_\nu\).  For every sector compactification datum
\(\boldsymbol{\mathscr X}\) and all \(q,\lambda\in\Q\),
one has the literal equality
\begin{equation}
 F_{\mathrm{orb},\boldsymbol{\mathscr X}}^\lambda
 H_{\mathrm{dR,orb}}^q(\mathscr U,w)
 =
 \bigoplus_{\substack{\nu\in\Lambda\\q-2a_\nu\in\Z}}
 F_{\mathrm{irr}}^{\lambda-a_\nu}
 H_{\mathrm{dR}}^{q-2a_\nu}(\mathscr U_\nu,w_\nu).
 \label{eq:intrinsic-orbifold-irregular-hodge-filtration}
\end{equation}
In particular, the left-hand side is independent of
\(\boldsymbol{\mathscr X}\); denote it by
\(F_{\mathrm{orb}}^\lambda
H_{\mathrm{dR,orb}}^q(\mathscr U,w)\).  Consequently
\begin{equation}
 f_{\mathrm{orb}}^{\lambda,\mu}(\mathscr U,w)
 :=\dim\Gr_{F_{\mathrm{orb}}}^{\lambda}
 H_{\mathrm{dR,orb}}^{\lambda+\mu}(\mathscr U,w)
 =f_{\mathrm{orb},\boldsymbol{\mathscr X}}^{\lambda,\mu}
 (\mathscr U,w)
 \label{eq:intrinsic-orbifold-irregular-hodge-number}
\end{equation}
is independent of all sector compactifications for every
\(\lambda,\mu\in\Q\).

If \((X,D,w)\) is a nondegenerate projective toroidal orbifold
compactification in the sense of Harder--Lee, the comparison in
\cref{thm:harder-lee-orbifold-comparison} identifies their filtration
and irregular orbifold Hodge numbers with
\eqref{eq:intrinsic-orbifold-irregular-hodge-filtration} and
\eqref{eq:intrinsic-orbifold-irregular-hodge-number}.  Hence these
objects depend only on \((\mathscr U,w)\), not on the chosen
Harder--Lee compactification.
\end{corollary}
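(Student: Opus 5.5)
The plan is to reduce the statement sector by sector to \cref{thm:detailed-nc-rational-compactification-comparison}, and then to pass from filtrations to dimensions of graded pieces. I fix \(q,\lambda\in\Q\) and a sector compactification datum \(\boldsymbol{\mathscr X}=\{\mathscr X_\nu\}\). By \cref{def:orbifold-irregular-hodge-filtration}, the left-hand side of \eqref{eq:intrinsic-orbifold-irregular-hodge-filtration} is the direct sum over the finitely many \(\nu\in\Lambda\) with \(q-2a_\nu\in\Z\) of \(F_{\mathrm{Yu},\mathscr X_\nu}^{\lambda-a_\nu}H_{\mathrm{dR}}^{q-2a_\nu}(\mathscr U_\nu,w_\nu)\). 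Each \((\mathscr U_\nu,w_\nu)\) is a smooth separated Deligne--Mumford Landau--Ginzburg model, since the inertia morphism is finite and fixed loci are smooth. Each \(\mathscr X_\nu\) is an NC rational stack compactification of it. Hence \cref{thm:detailed-nc-rational-compactification-comparison} applies with \(\lambda-a_\nu\in\Q\) in place of \(\lambda\). It shows that each summand equals the intrinsic subspace \(F_{\mathrm{irr}}^{\lambda-a_\nu}H_{\mathrm{dR}}^{q-2a_\nu}(\mathscr U_\nu,w_\nu)\), compared inside the same vector space. Taking the direct sum gives the literal equality \eqref{eq:intrinsic-orbifold-irregular-hodge-filtration} and the independence from \(\boldsymbol{\mathscr X}\).

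For the Hodge numbers, I note that \(\Gr^\lambda\) uses the right-limit convention \(F^{>\lambda}=\bigcup_{\lambda'>\lambda}F^{\lambda'}\). Both \(F^\lambda\) and \(F^{>\lambda}\) are built from subspaces that have already been identified literally for every rational index. Therefore the two filtrations coincide as \(\Q\)-indexed families of subspaces of \(H_{\mathrm{dR,orb}}^{\lambda+\mu}(\mathscr U,w)\), and their graded quotients have equal dimension. This gives \eqref{eq:intrinsic-orbifold-irregular-hodge-number}. There is no need to match graded pieces across different fractional parameters; this is the point of \cref{rem:fractional-transition-obstruction}. The identification happens at the level of subspaces before any quotient is taken.

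For the Harder--Lee assertion, I start from the sector datum supplied by \cref{lem:diagonal-inertia-admissibility}\textup{(ii)}. This datum consists of NC rational stack compactifications of exactly the open sectors \(\mathscr U_\nu\), namely those components of \(I\mathscr X\) that meet \(\mathscr U\), as in \cref{rem:open-inertia-not-compactified-inertia}. \cref{thm:harder-lee-orbifold-comparison} identifies the Harder--Lee filtered space with \((H_{\mathrm{dR,orb}}^*(\mathscr U,w),F_{\mathrm{orb},\boldsymbol{\mathscr X}})\). The first part then replaces this filtration by the intrinsic \(F_{\mathrm{orb}}\), and it also gives equality of the numbers.

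The main obstacle is bookkeeping rather than mathematics, because all the substance lies in \cref{thm:detailed-nc-rational-compactification-comparison}. The points to check are that the sectors of \(\mathscr U\) correspond bijectively to the components of \(I\mathscr X\) meeting \(\mathscr U\). It must also be checked that the identification of each open sector of \(I\mathscr X\) with \(\mathscr U_\nu\) is the one used on both sides. Finally, on sectors where \(w_\nu\equiv0\), the convention \(\mathscr Z=\mathscr P=0\) makes \(F_{\mathrm{Yu}}\) the logarithmic Hodge filtration, which is still covered by the main theorem.
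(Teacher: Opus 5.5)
Your proposal is correct and follows the paper's proof. It applies the main compactification-independence theorem to each sector \((\mathscr U_\nu,w_\nu)\) at level \(\lambda-a_\nu\), sums over the finitely many sectors, passes to \(\Gr\) via the right-limit convention, and deduces the Harder--Lee assertion from \cref{thm:harder-lee-orbifold-comparison}. The one cosmetic difference is in the graded step: the paper phrases it as finite direct sums commuting with the unions defining \(F^{>\lambda}\), whereas you note that the two \(\Q\)-indexed families of subspaces coincide outright, which equally suffices.
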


\begin{proof}
Apply
\cref{thm:introduction-nc-rational-compactification-independence}
to each sector \((\mathscr U_\nu,w_\nu)\).  Its equality of image
filtrations identifies every summand in the modelwise definition with
the corresponding \(F_{\mathrm{irr}}\)-summand.  The inertia stack
has finitely many connected components, so taking their direct sum
proves \eqref{eq:intrinsic-orbifold-irregular-hodge-filtration}.
Finite direct sums commute with the unions that define
\(F^{>\lambda}\).  Taking associated graded spaces and dimensions
therefore proves
\eqref{eq:intrinsic-orbifold-irregular-hodge-number}.  The final
assertion follows from the fixed-compactification comparison theorem.
\end{proof}
 
\bibliographystyle{amsalpha}
\newcommand{\etalchar}[1]{$^{#1}$}
\providecommand{\bysame}{\leavevmode\hbox to3em{\hrulefill}\thinspace}
\providecommand{\MR}{\relax\ifhmode\unskip\space\fi MR }
\providecommand{\MRhref}[2]{\href{http://www.ams.org/mathscinet-getitem?mr=#1}{#2}
}
\providecommand{\href}[2]{#2}

\end{document}